\newtheorem{thm}{Theorem}[section]
\newtheorem{lem}[thm]{Lemma}
\newtheorem{prop}[thm]{Proposition}
\newtheorem{cor}[thm]{Corollary}
\theoremstyle{definition}
\theoremstyle{remark}
\newtheorem{rem}[thm]{Remark}
\numberwithin{equation}{section}
\def \N {\mathbb N}
\def \R {\mathbb R}
\begin{document}

	\title[Rel-u.p.e of induced amenable group actions]{Relative uniformly positive entropy of induced amenable group actions}
	\author{Kairan Liu}
	\address{Kairan Liu,
		\small{College of Mathematics and Statistics, Chongqing University, 
			Chongqing 401331, P.R. China}}
	\email{lkr111@cqu.edu.cn}
	
	\author{Runju Wei}
	\address{Runju Wei,
		Department of Mathematics, University of Science and Technology of China, Hefei, Anhui 230026, China}
	\email{wrj3219@mail.ustc.edu.cn}
	\subjclass[2010]{37B05, 54H20}
	\keywords{relative uniformly positive entropy, induced system, relative independence}
	
	\date{\today}

	\begin{abstract}
		Let $G$ be a countably infinite discrete amenable group.
		It should be noted that a $G$-system $(X,G)$ naturally induces a $G$-system $(\mathcal{M}(X),G)$, where 
		$\mathcal{M}(X)$ denotes the space of Borel probability measures on the compact metric space $X$ endowed 
		with the weak*-topology. A factor map $\pi\colon (X,G)\to(Y,G)$ between two $G$-systems induces a factor map 
		$\widetilde{\pi}\colon(\mathcal{M}(X),G)\to(\mathcal{M}(Y),G)$. It turns out that $\widetilde{\pi}$ is open if and only if $\pi$ is open. 
		When $Y$ is fully supported, it is shown that $\pi$ has relative uniformly positive entropy if and only if $\widetilde{\pi}$ has 
		relative uniformly positive entropy.
	\end{abstract}
	
	\maketitle 
	
	\section{Introduction}
	In the process of studying the classification of topological dynamical systems, entropy as a conjugacy invariant 
	plays an important role which divides them into two classes. For $\mathbb{Z}$-systems, the notion of uniformly 
	positive entropy (u.p.e for short) was introduced by Blanchard in \cite{B} as an analogue in topological 
	dynamics for the notion of a K-process in ergodic theory. He then naturally defined the notion of entropy pairs 
	and used it to show that a u.p.e system is disjoint from all minimal zero entropy systems \cite{Blanchard2}. 
	Further research concerning u.p.e systems and entropy pairs can be found in \cite{Blanchard-Host,
		Blanchard-Glasner 1997,Glasner-Weiss 1994,L-S,Huang-Ye,Glasner-Ye}.
	
	Recently, there are a lot of significant progress in studying relative entropy via local relative entropy
	theory for $\mathbb{Z}$-systems. For a factor map between two $\mathbb{Z}$-systems, Glasner and Weiss 
	\cite{Glasner-Weiss-2} introduced the relative uniformly positive entropy (rel-u.p.e) and the notion of relative topological Pinsker factor based on the idea of u.p.e. 
	extensions. Later, Park and Siemaszko \cite{P-S} interpreted another relative topological Pinsker factor, defined by Lemanczyk and 
	Siemaszko \cite{L-S}, using relative measure-theoretical entropy and discussed relative product. In 
	\cite{Huang-Ye-Zhang-1}, Huang, Ye and Zhang introduced the notions of relative entropy tuples in both topological and 
	measure-theoretical settings. They showed
	that the finite product of rel-u.p.e extensions has
	rel-u.p.e iff the factors are fully supported (definitions see Section \ref{desupp}). They
	also proved some classical results about rel-u.p.e extension. 
	We will refer redears to \cite{B-F-F,D-S,Huang-Ye-Zhang2006,L-W} for more results related to local relative entropy
	theory.
	
	
	Bauer and Sigmund \cite{Bauer-Sigmund} initiated a systematic study of the connections between dynamical properties
	of $\mathbb{Z}$-system and its induced system (whose phase space consists of all Borel probability measures on 
	the original space, for details see Section \ref{sec:pre}). A well-known result due to Glasner and
	Weiss \cite{Glasner-Weiss-1} in 1995 reveals that if a system has zero topological entropy, then so does
	its induced system. Later, this connection was further developed by Kerr and Li in \cite{Kerr-Li-2}.
	They obtained that a system is null if and only if its induced system is null. More research concerning 
	relations of these systems was developed in \cite{Akin-Auslander-Nagar2017,Banks,Sharma-Nagar,Zhou-Qiao}. Recently, Bernardes et al 
	\cite{N-U-A} proved that a $\mathbb{Z}$-system has u.p.e if and only if so does its induced
	system.  
	
	After Ornstein and Weiss's pioneering work for amenable group actions in 1987 
	\cite{O-W-2}, there are a lot of results developed in the process of studying the amenable group action systems. 
	We will refer the redear to see the related papers \cite{Lindenstrauss,R-W,W-Z,Weiss,Zimmer,Huang-Ye-Zhang-2}. 
	In this paper,
	we always assume that $G$ is a countably infinite discrete amenable group. By a \textbf{$G$-system} $(X,G)$ we mean a compact metric space
	$X$ together with $G$ acting on $X$ by homeomorphisms, that is, there exists a continuous map 
	$\Gamma:G\times X\to X$, satisfying 
	\begin{itemize}
		\item{$\Gamma(e_{G},x)=x$ for every $x\in X$.}
		\item{$\Gamma(g,\Gamma(h,x))=\Gamma(gh,x)$ for each $g,h\in G$ and $x\in X$.}
	\end{itemize}
	We write $\Gamma(g,x)$ as $gx$ for every $g\in G$ and $x\in X$.
	
	Motivated by those works which were previously mentioned for $\mathbb{Z}$-systems and 
	the local entropy theory developed for countable discrete amenable group action systems due to 
	Huang, Ye and Zhang \cite{Huang-Ye-Zhang-2}, Kerr and Li \cite{Kerr-Li-1}, the present paper aims to investigate the properties of relative uniformly positive 
	entropy (rel-u.p.e) for induced factor map of a factor map between two
	$G$-systems (see Section \ref{sec:pre} for definitions). 
	
	More precisely, let $(X,G)$ be a $G$-system, $\mathcal{B}_X$ be the set of Borel subsets of $X$ and $\mathcal{M}(X)$
	be the space of Borel probability measures on the compact metric space $X$
	endowed with the weak*-topology. 
	Then $G$-system $(X,G)$ induces a system $(\mathcal{M}(X),G)$ (see Section \ref{sec:pre} for details). 
	For any $x\in X$, let $\delta_{x}$ denote the Dirac measure on $x$ and 
	$$\mathcal{M}_n(X)=\{\frac{1}{n}\sum_{i=1}^n\delta_{x_i}: x_1, x_2,\cdots,x_n\in X \}$$
	for each $n\in \mathbb{N}$. Then $\mathcal{M}_n(X)$ is closed and invariant under $G$ (i.e. $g\mathcal{M}_n(X)=\mathcal{M}_n(X)$ for every
	$g\in G$). Hence we can consider the subsystems $(\mathcal{M}_n(X),G)$ of $(\mathcal{M}(X),G)$ for each 
	$n\in \mathbb{N}$. For a factor map $\pi:(X,G)\to (Y,G)$ between two
	$G$-systems, when $supp(Y)=Y$ (definitions see Section \ref{desupp}) we have the following result. 

	\begin{thm}\label{main} Let $\pi\colon (X,G)\to (Y,G)$ be a factor map between two $G$-systems,	
	$\widetilde{\pi}\colon (\mathcal{M}(X),G)\to(\mathcal{M}(Y),G)$ be the factor map induced by $\pi$ and 
	$\widetilde{\pi}_n\colon (\mathcal{M}_n(X),G)$ $\to(\mathcal{M}_n(Y),G)$ be the restriction of $\widetilde{\pi}$
	on $\mathcal{M}_n(X)$. When $supp(Y)=Y$, the following are equivalent
	\begin{itemize}
  \item[(1)] $\pi$ has relative uniformly positive entropy;
  \item[(2)]  $\widetilde{\pi}_n$ has relative uniformly positive entropy for some $n\in\mathbb{N}$;
  \item[(3)] $\widetilde{\pi}_n$ has relative uniformly positive entropy for every $n\in\mathbb{N}$;
  \item[(4)]$\widetilde{\pi}$ has relative uniformly positive entropy. 
	\end{itemize}
	\end{thm}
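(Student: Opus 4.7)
\noindent\emph{Proof plan.}
The plan is to close the equivalence via the cycle $(1)\Rightarrow(3)\Rightarrow(2)\Rightarrow(1)$ together with the side loop $(1)\Rightarrow(4)\Rightarrow(1)$; since $(3)\Rightarrow(2)$ is immediate (just fix a single $n$), the real work reduces to two ``push'' directions $(1)\Rightarrow(3)$, $(1)\Rightarrow(4)$ and two ``pull-back'' directions $(2)\Rightarrow(1)$, $(4)\Rightarrow(1)$. The central technical device is the combinatorial characterization of rel-u.p.e.\ via \emph{relative IE-pairs} for amenable group actions, in the spirit of Kerr--Li and Huang--Ye--Zhang: a pair $(x_1,x_2)\in X\times X$ with $\pi(x_1)=\pi(x_2)$ and $x_1\neq x_2$ is a relative entropy pair for $\pi$ iff for every pair of open neighborhoods $U_i\ni x_i$ the pair $(U_1,U_2)$ has positive independence density relative to $\pi$ along the F{\o}lner sequence, and analogously for $\widetilde{\pi}$ and $\widetilde{\pi}_n$. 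We apply this criterion throughout.

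For $(2)\Rightarrow(1)$, given $(x_1,x_2)$ non-diagonal with $\pi(x_1)=\pi(x_2)$, pick any $x_0\in X$ and set $\mu_i=\tfrac{1}{n}(\delta_{x_i}+(n-1)\delta_{x_0})\in\mathcal{M}_n(X)$. Then $\mu_1\neq\mu_2$ while $\widetilde{\pi}_n(\mu_1)=\widetilde{\pi}_n(\mu_2)$, so $(\mu_1,\mu_2)$ is an IE-pair for $\widetilde{\pi}_n$; the independence of product-form neighborhoods of $\mu_i$ built from disjoint opens $U_i\ni x_i$ together with a common neighborhood of $x_0$ then projects, by reading off the distinguished atom, to independence of $(U_1,U_2)$ in $X$, yielding the desired IE-pair for $\pi$. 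The same scheme works for $(4)\Rightarrow(1)$ with $\mu_i=\delta_{x_i}$ and neighborhoods of the form $\{\nu:\nu(U_i)>1-\varepsilon\}$, using a finite-subset extraction of the independence data to recover pointwise independence from the measure-level independence.

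The substantial step is $(1)\Rightarrow(3)$. Let $(\mu_1,\mu_2)\in\mathcal{M}_n(X)^2$ be non-diagonal with $\widetilde{\pi}_n(\mu_1)=\widetilde{\pi}_n(\mu_2)$; writing $\mu_k=\tfrac{1}{n}\sum_{i=1}^n\delta_{x_{k,i}}$ and permuting indices we may assume $\pi(x_{1,i})=\pi(x_{2,i})$ for every $i$ and $x_{1,j}\neq x_{2,j}$ for some $j$. For each coordinate with $x_{1,i}\neq x_{2,i}$ the rel-u.p.e.\ of $\pi$ supplies IE-data for the pair $(x_{1,i},x_{2,i})$ relative to a chosen pair of disjoint opens. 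For coordinates with $x_{1,i}=x_{2,i}$ we need a common ``spectator'' open $W_i$ and, crucially, points whose orbits return to $W_i$ at a prescribed positive-density subset of the F{\o}lner set. Here the hypothesis $\supp(Y)=Y$ is essential: it guarantees a fully supported $G$-invariant measure on $Y$ which lifts through $\pi$ and, via pointwise ergodic/generic-point considerations, produces orbits in $X$ with the required return-time behaviour. Assembling these ingredients across all $n$ coordinates gives disjoint product-form neighborhoods $V_k\ni\mu_k$ in $\mathcal{M}_n(X)$ with positive independence density, so $(\mu_1,\mu_2)$ is an IE-pair for $\widetilde{\pi}_n$. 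The implication $(1)\Rightarrow(4)$ follows either by the same construction in $\mathcal{M}(X)$ or, more economically, via $(3)\Rightarrow(4)$ by weak$^*$-approximation: any non-diagonal pair $(\mu_1,\mu_2)\in\mathcal{M}(X)^2$ with $\widetilde{\pi}(\mu_1)=\widetilde{\pi}(\mu_2)$ admits coupled discretizations $(\mu_1^{(n)},\mu_2^{(n)})\in\mathcal{M}_n(X)^2$ with $\widetilde{\pi}_n(\mu_1^{(n)})=\widetilde{\pi}_n(\mu_2^{(n)})$, and the IE-witnesses transfer to the weak$^*$-limit.

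The main obstacle I expect is precisely the simultaneous orbit control needed in $(1)\Rightarrow(3)$: one must coordinate $n$ distinct points so that, for \emph{every} labeling $\omega\colon J\to\{1,2\}$ on a positive-density subset $J$ of a F{\o}lner set, their joint orbit pattern lands in the prescribed combination of opens. The ``differing'' coordinates are controlled by the IE-hypothesis for $\pi$, but the ``equal'' coordinates impose a recurrence condition that is not automatic; reconciling the combinatorial independence coming from rel-u.p.e.\ with the recurrence data supplied by $\supp(Y)=Y$, in a way that is uniform in $\omega$, is where the heart of the argument must go.
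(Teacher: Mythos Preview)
Your overall architecture matches the paper's: the equivalence of $(1),(2),(3)$ is one block, and $(1)\Leftrightarrow(4)$ is the other, with $(1)\Rightarrow(4)$ going through a density lemma ($\bigcup_n R_{\widetilde\pi_n}$ is dense in $R_{\widetilde\pi}$, the paper's Lemma~4.1) exactly as you suggest via ``coupled discretizations''. Two points deserve comment.

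\textbf{On $(1)\Rightarrow(3)$.} Your coordinate-by-coordinate plan, splitting into ``differing'' and ``equal'' coordinates and using $\supp(Y)=Y$ to handle the spectators, is correct in spirit but is really a re-derivation of the statement that \emph{the finite product of rel-u.p.e.\ extensions over fully supported bases is rel-u.p.e.} (the paper's Theorem~2.4, proved in the Appendix). The paper organizes things so that this product result is isolated once; then $(1)\Rightarrow(3)$ becomes a two-line observation: a pair $(\mu_1,\mu_2)\in R_{\widetilde\pi_n}\setminus\Delta$ lifts along the continuous symmetrization map $\Phi\colon X^{(n)}\to\mathcal{M}_n(X)$ to a pair in $R_{\pi^{(n)}}\setminus\Delta(X^{(n)})$, and since $\pi^{(n)}$ has rel-u.p.e.\ the independence transfers. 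The ``simultaneous orbit control'' you flag as the main obstacle is precisely what the product theorem packages, and its proof uses the equivalence (Kerr--Li, \emph{Ergodic Theory}, Lemma~12.6) between positive-density return sets and positive invariant measure, rather than pointwise ergodic theorems. Your instinct is right, but you should recognize you are proving the product theorem and invoke that cleaner formulation.

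\textbf{On $(4)\Rightarrow(1)$.} Here your sketch has a genuine gap. Saying ``the same scheme works'' as for $(2)\Rightarrow(1)$ glosses over the essential difficulty: in $\mathcal{M}_n(X)$ the condition $\mu(V_i)>1-\tfrac{1}{2n}$ forces \emph{every} atom of $\mu$ into $V_i$, so extraction of a point is free; in $\mathcal{M}(X)$ the condition $\mu(V_i)>1-\varepsilon$ gives no such atom, and for a given $\sigma\in\{1,2\}^E$ the sets $g^{-1}V_{\sigma(g)}$ each have $\mu_\sigma$-measure $>1-\varepsilon$ but their \emph{intersection} over $g\in E$ may be empty. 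The paper's route is: average $\frac{1}{|E|}\sum_{g\in E}\mathbf{1}_{g^{-1}V_{\sigma(g)}}$ and use Markov's inequality to find, for each $\sigma$, a point $x_\sigma$ in a common fiber that is good for a $(1-\varepsilon^2)$-fraction of $g\in E$; then pigeonhole over the $\binom{|E|}{\lfloor(1-\varepsilon^2)|E|\rfloor}$ possible ``good'' index sets, and finally apply the Sauer--Perles--Shelah / Karpovsky--Milman lemma to extract a positive-density $H_1\subseteq E$ on which full $\{1,2\}^{H_1}$-independence is realized by actual points. Your phrase ``finite-subset extraction'' does not capture this two-stage (averaging + Sauer--Shelah) argument, and without it the implication does not close.
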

	Notice that when $Y$ is a singleton, we obtain that $(X,G)$ has u.p.e if and only if the induced system 
	$(\mathcal{M}(X),G)$ has u.p.e (when $G=\mathbb{Z}$, see \cite[Theorem 4]{N-U-A}).
	
	We say a map $\pi\colon X\to Y$ between two topological spaces is \textbf{open} if the images of open sets are open. 
	Then we have the following result.
	\begin{thm}\label{prop-1.4}Let $\pi:X\to Y$ be a  surjective continuous map between two compact metrizable spaces, and
		$\widetilde{\pi}:\mathcal{M}(X)\to \mathcal{M}(Y)$ be the induced map of $\pi$. 
		Then $\pi$ is open if and only if $\widetilde{\pi}$ is open.
	\end{thm}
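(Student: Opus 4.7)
The plan is to prove the two directions separately, with the reverse implication ($\widetilde{\pi}$ open $\Rightarrow$ $\pi$ open) being essentially automatic via Dirac masses and the forward direction requiring a mass-transport construction. For the reverse direction, given $x\in X$ and $y_n\to\pi(x)$ in $Y$, the convergence $\delta_{y_n}\to\delta_{\pi(x)}=\widetilde{\pi}(\delta_x)$ together with openness of $\widetilde{\pi}$ yields $\mu_n\in\mathcal{M}(X)$ with $\widetilde{\pi}(\mu_n)=\delta_{y_n}$ and $\mu_n\to\delta_x$ in the weak*-topology. The first condition forces $\mathrm{supp}(\mu_n)\subseteq\pi^{-1}(y_n)$, and since $\delta_x(\partial B_X(x,\eta))=0$ for all $\eta>0$, the Portmanteau theorem gives $\mu_n(B_X(x,\eta))\to 1$; one can therefore select $x_n\in\pi^{-1}(y_n)\cap B_X(x,\eta_n)$ with $\eta_n\to 0$, whence $x_n\to x$ and $\pi(x_n)=y_n$. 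This is the sequential criterion for openness of $\pi$ between compact metric spaces.

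For the forward direction, I would fix compatible metrics $d_X,d_Y$ and work with the Wasserstein metrics $W_X,W_Y$, which metrize the weak*-topologies on $\mathcal{M}(X),\mathcal{M}(Y)$. The goal is the uniform lifting statement: for every $\epsilon>0$ there exists $\delta>0$ such that whenever $W_Y(\pi_*\mu,\nu)<\delta^2$, some $\mu'\in\mathcal{M}(X)$ satisfies $\pi_*\mu'=\nu$ and $W_X(\mu,\mu')<\epsilon+\mathrm{diam}(X)\cdot\delta$. I would carry this out in three substeps. (a) \emph{Uniform openness of $\pi$}: a standard compactness argument upgrades pointwise openness to the statement that for every $\epsilon>0$ there is $\delta>0$ such that $d_Y(\pi(x),y)<\delta$ forces $\pi^{-1}(y)\cap B_X(x,\epsilon)\neq\emptyset$. (b) \emph{Borel selector}: applying Kuratowski--Ryll-Nardzewski to the fiber map $y\mapsto\pi^{-1}(y)$ (which is lower semi-continuous precisely because $\pi$ is open) and refining via (a), one produces a Borel $s\colon X\times Y\to X$ with $\pi\circ s=\mathrm{pr}_Y$ and $d_X(x,s(x,y))<\epsilon$ whenever $d_Y(\pi(x),y)<\delta$. (c) \emph{Transport}: take an optimal coupling $\gamma\in\mathcal{M}(Y\times Y)$ of $\pi_*\mu$ and $\nu$, disintegrate $\gamma=\int(\delta_{y_1}\otimes\gamma_{y_1})\,d(\pi_*\mu)(y_1)$, and define $\mu'$ by $\int f\,d\mu'=\int_X\int_Y f(s(x,y_2))\,d\gamma_{\pi(x)}(y_2)\,d\mu(x)$ for $f\in C(X)$. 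A direct calculation yields $\pi_*\mu'=\nu$, and splitting the transport cost based on whether $d_Y(y_1,y_2)<\delta$, together with Markov's inequality applied to $\gamma$, yields the asserted Wasserstein bound.

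The main obstacle I anticipate is step (b): producing a Borel selector $s$ that simultaneously satisfies the fiber constraint $\pi\circ s=\mathrm{pr}_Y$ and the quantitative closeness $d_X(x,s(x,y))<\epsilon$ on the open region $\{d_Y(\pi(x),y)<\delta\}$ forces one to work with a multivalued map whose values depend nontrivially on $x$, such as $(x,y)\mapsto\overline{\pi^{-1}(y)\cap B_X(x,\epsilon)}$, and to verify its weak Borel measurability from openness of $\pi$; for the integration in step (c), the Jankov--von Neumann universally-measurable variant of Kuratowski--Ryll-Nardzewski is already enough. A selector-free alternative would be to first prove the lifting when $\mu$ and $\nu$ are both finitely supported, by an elementary combinatorial matching using only pointwise openness, and then pass to the general case by weak* density and a diagonal argument; this bypasses measurable selection entirely, at the cost of making the uniform constants less transparent.
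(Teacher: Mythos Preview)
Your reverse implication is essentially the paper's argument recast sequentially: the paper fixes an open $U\ni x$, picks a bump function $f$ supported in $U$, and shows that any $\mu'\in\widetilde{\pi}^{-1}(\delta_{y'})$ lying in $\{\int f>2/3\}$ must have support meeting $U\cap\pi^{-1}(y')$; your Portmanteau step plays the same role. Both arguments are correct.

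For the forward direction your route is genuinely different from the paper's. The paper never introduces the Wasserstein metric or measurable selection. Instead it uses the basis $\mathbb{W}(U_1,\dots,U_k;\eta_1,\dots,\eta_k)=\{\mu:\mu(U_i)>\eta_i\}$ from Proposition~\ref{basis of M(X)}, fixes such a neighbourhood of a given $\mu$, and then works entirely with the Prohorov metric. The key combinatorial device is to stratify $Y$ by the sets $V_\sigma=\bigcap_{\sigma(i)=1}\pi(U_i)$ for $\sigma\in\{0,1\}^{[k]}$ (which are open because $\pi$ is open), approximate a nearby $\tau\in\mathcal{M}(Y)$ by finitely supported $\tau_n$, and then for each atom $y_{n,h}$ lying in a stratum $C_\sigma'$ choose a preimage in the appropriate $U_i$ so that the resulting $\mu_n$ has $\mu_n(U_i)>\eta_i$. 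A limit of the $\mu_n$ lies in the closure of $\mathbb{W}(\cdot)$, which was arranged to sit inside the original neighbourhood.

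Your argument buys a clean quantitative statement (a uniform Wasserstein lifting bound) and avoids the somewhat intricate $\{0,1\}^{[k]}$ bookkeeping, at the cost of invoking measurable selection and disintegration---tools heavier than anything the paper uses. The paper's argument is more elementary in its prerequisites (no selection theorems, no optimal couplings) but correspondingly longer to execute. Your proposed selector-free alternative, proving the lifting first for finitely supported measures by elementary matching and then passing to the limit, is in fact closer in spirit to what the paper actually does, though the paper organises the finite matching via the stratification $\{V_\sigma'\}$ rather than via transport plans.
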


	This paper is organized as follows. In Section \ref{sec:pre}, we will list some basic notions and results needed in our 
	argument. In Section \ref{section 3} and Section \ref{section 4}, we will give a proof of Theorem \ref{main}.
	Finally, we prove Theorem \ref{prop-1.4} in Section \ref{section 5}.
	
	\textbf{Acknowledgements.}
	The authors would like to thank Prof. Wen Huang, Prof. Hanfeng Li, Dr. Yixiao Qiao and Dr. Lei Jin for their useful comments and 
	suggestions. Kairan Liu is supported by China Postdoctoral Science Foundation (No. 2022M710527).
	
	\section{Preliminaries}\label{sec:pre}
	In this section, we recall some basic notations and results which will be used repeatly in our paper. Denote by $\mathbb{N}$ and $\mathbb{R}$ the set of natural numbers and real numbers respectively.
	For $n\in\mathbb{N}$, we write $[n]$ for $\{1,2, \cdots,n\}$.
    \subsection{Amenable group}
    We say a countably infinite discrete group $G$ is \textbf{amenable} if there always exists an invariant Borel probability measure when it acts on any compact
	metric space. In the case $G$ is a countably infinite discrete group, amenability is equivalent to the existence of a 
	\textbf{F$\phi$lner sequence}: a 
	sequence of nonempty  finite subsets $\{F_n\}_{n=1}^{\infty}$ of $G$ such that
	$$\lim_{n\to\infty}\frac{\vert F_n\Delta gF_n\vert}{\vert F_n\vert}=0$$
    for all $g\in G$. One can see Ornstein and Weiss' paper \cite{O-W-2} for more details about amenable group.
	In this paper, we always assume that $G$ is a countably infinite discrete amenable group and denote
	by $\mathcal{F}(G)$ the collection of nonempty finite subsets of $G$. 
	The following result is well-known (see \cite[Theorem 4.48]{Kerr-Li-2016}).
	\begin{thm}\label{amenable converges}Let $\phi$ be a real-valued function on $\mathcal{F}(G)$
		satisfying
		\begin{itemize}
			\item[(1)]$\phi(Fs)=\phi(F)$ for all $F\in\mathcal{F}(G)$ and $s\in G$, and 
            \item[(2)]$\phi(F)\leq\frac{1}{k}\sum_{E\in\mathcal{E}}\phi(E)$ for every $k\in\mathbb{N}$, $F\in\mathcal{F}(G)$
			 and finite collection $\mathcal{E}\subseteq\mathcal{F}(G)$ with $\bigcup_{E\in\mathcal{E}}E\subseteq F$ and 
			$\sum_{E\in\mathcal{E}}1_E\geq k1_F$.
		\end{itemize}
		Then $\frac{\phi(F)}{\vert F\vert}$ converges to a limit as $F$ becomes more and more invariant and this limit is
		equal to $\inf_{F}\frac{\phi(F)}{\vert F\vert}$,
		where $F$ ranges over all nonempty finite subsets of $G$.
	\end{thm}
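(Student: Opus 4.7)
The strategy is the standard Ornstein--Weiss quasi-tiling proof: set $\alpha:=\inf_{F\in\mathcal{F}(G)}\phi(F)/|F|$ and show that for every $\varepsilon>0$ there exist $K\in\mathcal{F}(G)$ and $\eta>0$ such that any $(K,\eta)$-invariant $F$ satisfies $\phi(F)/|F|<\alpha+O(\varepsilon)$. Since $\phi(F)/|F|\geq\alpha$ is automatic from the definition, the convergence to $\alpha$ follows.

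First I would verify that $\alpha$ is finite. Applying hypothesis~(2) with $k=1$ to the cover $\mathcal{E}=\{\{g\}:g\in F\}$ of $F$, and using right-invariance~(1) to conclude $\phi(\{g\})=\phi(\{e\})$, I get $\phi(F)\leq |F|\phi(\{e\})$, hence $\alpha\leq\phi(\{e\})<\infty$. Next, fix $\varepsilon>0$ and choose $T_0\in\mathcal{F}(G)$ with $\phi(T_0)/|T_0|<\alpha+\varepsilon$. The main engine is the Ornstein--Weiss $\varepsilon$-quasi-tiling theorem (developed earlier in Kerr--Li, Chapter~4): one inductively produces a finite family $T_1,\dots,T_N\in\mathcal{F}(G)$, each sufficiently invariant so that $\phi(T_i)/|T_i|<\alpha+\varepsilon$, together with $K\in\mathcal{F}(G)$ and $\eta>0$, such that every $(K,\eta)$-invariant $F$ admits finite sets $C_1,\dots,C_N\subseteq G$ for which $\{T_ic:i\in[N],\,c\in C_i\}$ is a family of pairwise disjoint subsets of $F$ whose union has cardinality at least $(1-\varepsilon)|F|$.

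With the quasi-tiling in hand, decompose $F=\bigsqcup_{i,c}T_ic\sqcup R$ with $|R|\leq\varepsilon|F|$, and apply~(2) with $k=1$ to the cover
$$\mathcal{E}:=\{T_ic:i\in[N],\,c\in C_i\}\cup\{\{r\}:r\in R\},$$
which satisfies $\sum_{E\in\mathcal{E}}\mathbf{1}_E=\mathbf{1}_F$. Right-invariance gives $\phi(T_ic)=\phi(T_i)\leq(\alpha+\varepsilon)|T_i|$, and $\sum_i|C_i||T_i|\leq|F|$ because the translates are disjoint in $F$. Together this yields
$$\frac{\phi(F)}{|F|}\;\leq\;(\alpha+\varepsilon)\,\frac{\sum_i|C_i||T_i|}{|F|}+\varepsilon\,\phi(\{e\}),$$
with a harmless sign adjustment when $\alpha+\varepsilon<0$ (replace the first factor by $(1-\varepsilon)(\alpha+\varepsilon)$). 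Letting $\varepsilon\to0$ while $F$ runs through a Følner sequence forces $\limsup\phi(F)/|F|\leq\alpha$, and combined with the trivial lower bound this gives the desired equality.

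The main obstacle is the quasi-tiling step itself: simultaneously constructing the $T_i$ so that each is internally invariant enough to satisfy $\phi(T_i)/|T_i|<\alpha+\varepsilon$, while the family as a whole is arranged to permit iterative peeling of disjoint translates from an arbitrary invariant $F$. This is the genuinely combinatorial core of the Ornstein--Weiss method, and in practice one invokes the quasi-tiling theorem as a black box rather than re-deriving it.
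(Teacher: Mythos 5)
The paper does not prove this statement; it simply cites it as Theorem~4.48 of Kerr--Li's book. So the benchmark is that argument, which runs along a genuinely different and more elementary line than yours.

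Your proof has a real gap at the quasi-tiling step. To invoke the Ornstein--Weiss $\varepsilon$-quasi-tiling theorem you must produce a chain $T_1,\dots,T_N$ in which $T_{i+1}$ is $(T_i,\eta_i)$-invariant for a prescribed sequence $\eta_i$, \emph{and} simultaneously you want every $T_i$ to satisfy $\phi(T_i)/|T_i|<\alpha+\varepsilon$. But $\alpha$ is an infimum over \emph{all} nonempty finite subsets, so the only thing you know a priori is that \emph{some} (possibly wildly non-invariant) $T_0$ has $\phi(T_0)/|T_0|<\alpha+\varepsilon$. You have not shown that sets meeting a given invariance requirement can also be chosen with small $\phi$-ratio --- and that is essentially the content of the theorem. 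The natural way to manufacture such a $T_2$ from $T_0$ (say $T_2=T_0D$ for a large $D$, estimating $\phi(T_2)$ by covering $T_2$ with right translates of $T_0$ and invoking hypothesis~(2) with $k=|T_0|$) already proves the theorem outright, making the quasi-tiling machinery superfluous. A tell-tale sign that your route is off the intended track is that you never use $k>1$ in hypothesis~(2): the multiplicity $k$ is there precisely to support a direct covering argument with a single tile, namely the family $\mathcal{E}=\{Tg\cap F: g\in T^{-1}F\}$, which covers $F$ with multiplicity exactly $|T|$ and needs no quasi-tiling at all; the boundary terms $Tg\cap F\subsetneq Tg$ are absorbed using the bound $\phi(A)\leq|A|\,\phi(\{e\})$ and almost invariance of $F$. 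As a secondary point, the Ornstein--Weiss theorem yields $\varepsilon$-disjoint (not pairwise disjoint) translates, so your partition $F=\bigsqcup T_ic\sqcup R$ would also need to be weakened to an $\varepsilon$-disjoint cover; that adjustment is harmless, but the bootstrap issue above is not.
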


	\subsection{Induced systems}
	Assume that $X$ is a compact metric space. Let $\mathcal{B}_X$ be the collection of Borel subsets of $X$, $C(X)$ be the space of continuous 
	maps from $X$ to $\R$ endowed with the supremum norm $||\cdot||_{\infty}$, and $\mathcal{M}(X)$ be the set of Borel 
	probability measures on $X$ endowed with the \textbf{weak$^\ast$-topology}, which is the smallest topology making the map
	$$D_g:\mathcal{M}(X)\to\mathbb{R},\;\;\;\mu\mapsto\int_Xgd\mu$$ 
	continuous for every $g\in C(X)$, and the topology basis of weak$^\ast$-topology consists of the following sets
	\begin{small}
		\begin{align}\label{basis of M(X)-function}
		\mathbb{V}(\mu;f_1,\cdots,f_k;\epsilon)\colon=
		\bigg\{\nu\in\mathcal{M}(X): \big\vert\int_Xf_id\mu-\int_Xf_id\nu\big\vert<\epsilon, \ \text{for\ all}\ i\in[k]\bigg\},
		\end{align}
		\end{small}
   where $\mu\in\mathcal{M}(X)$, $k\geq 1$, $\epsilon>0$ and $f_i:X\to\mathbb{R}$ are continuous functions for $i\in[k]$. The \textbf{Prohorov metric} on $\mathcal{M}(X)$
	$$\begin{small}d_P(\mu,\nu):=inf\big\{\delta>0:\mu(A)\leq\nu(A^{\delta})+\delta\ 
		\text{and}\ \nu(A)\leq \mu(A^{\delta})+\delta\ \text{for\ all}\ A\in\mathcal{B}_X\big\},\end{small}$$
	where $A^{\delta}=\{x\in X:\ d(x,A)<\delta\}$, is compatible with the weak*-topology. We will refer the readers 
	to the books \cite{P,RMD,ASK} for the knowledge of space $\mathcal{M}(X)$. Moreover, 
	$$d_P(\mu,\nu)=
	inf\big\{\delta>0:\mu(A)\leq\nu(A^{\delta})+\delta\ \text{for\ all}\ A\in\mathcal{B}_X\big\}$$ 
	(see \cite[Page 72]{P}). The following Proposition \ref{basis of M(X)} describes a basis for the weak*-topology on 
	$\mathcal{M}(X)$ due to Bernardes et al (see \cite[Lemma 1]{N-U-A} ).
	\begin{prop}\label{basis of M(X)}
		The set of the form
		$$\mathbb{W}(U_1,U_2,\cdots,U_k:\eta_1,\eta_2,\cdots,\eta_k):=\big\{\mu\in\mathcal{M}(X)\colon \ \mu(U_i)>\eta_i\ \text{for}\ 
		i\in[k]\big\},$$
		where $k\geq1$, $U_1,U_2,\cdots,U_k$ are nonempty disjoint open sets in $X$ and $\eta_1,\eta_2,\cdots,\eta_k$ 
		are positive real numbers with $\eta_1+\eta_2+\cdots+\eta_k<1$, form a basis for the weak*-topology on 
		$\mathcal{M}(X)$.
	\end{prop}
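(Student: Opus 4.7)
The plan is to establish the two defining properties of a basis: every $\mathbb{W}(U_1,\ldots,U_k:\eta_1,\ldots,\eta_k)$ is weak*-open, and every basic neighborhood $\mathbb{V}(\mu;f_1,\ldots,f_k;\epsilon)$ from (\ref{basis of M(X)-function}) contains such a $\mathbb{W}$ around its center $\mu$. Openness is immediate from the portmanteau characterization: for each open $U\subseteq X$ the map $\mu\mapsto\mu(U)$ is lower semicontinuous on $\mathcal{M}(X)$, so $\{\mu:\mu(U)>\eta\}$ is weak*-open, and finite intersections preserve openness.

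For the harder direction, I fix $\mu$ and a basic neighborhood $\mathbb{V}(\mu;f_1,\ldots,f_k;\epsilon)$. Using uniform continuity of the $f_i$ on the compact space $X$, I first pick $\delta>0$ with $d(x,y)<\delta\Rightarrow|f_i(x)-f_i(y)|<\epsilon/3$ for every $i$. Next I cover $X$ by finitely many open balls of radius less than $\delta/2$, perturbing each radius slightly so that every chosen ball has $\mu$-null boundary (possible since only countably many radii $r$ satisfy $\mu(\partial B(z,r))>0$). From these balls $B_1,\ldots,B_N$ I form the pairwise disjoint open sets $V_j:=B_j\setminus\overline{B_1\cup\cdots\cup B_{j-1}}$; their union is of full $\mu$-measure since its complement lies inside $\bigcup_j\partial B_j$. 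Discarding those with $\mu(V_j)=0$ and relabeling, I obtain disjoint nonempty open sets $U_1,\ldots,U_n$ of diameter $<\delta$ with $\sum_j\mu(U_j)=1$.

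Now I set $\eta_j:=\mu(U_j)-\alpha$ for a small $0<\alpha<\min_j\mu(U_j)$, so that $\mu\in\mathbb{W}(U_1,\ldots,U_n:\eta_1,\ldots,\eta_n)$ and $\sum_j\eta_j=1-n\alpha<1$. For any $\nu$ in this $\mathbb{W}$ we have $\nu(U_j)>\mu(U_j)-\alpha$; combined with $\sum_j\nu(U_j)\leq 1$ this forces the two-sided bound $|\nu(U_j)-\mu(U_j)|<n\alpha$ along with $\nu(X\setminus\bigcup_j U_j)<n\alpha$. Picking any $x_j\in U_j$ and using the $\delta$-oscillation bound on each $f_i$, a routine telescoping estimate gives
\[
\Bigl|\int f_i\,d\nu-\int f_i\,d\mu\Bigr|\;\leq\;n^2\alpha\|f_i\|_\infty+n\alpha\|f_i\|_\infty+\tfrac{2\epsilon}{3},
\]
so choosing $\alpha$ small enough places $\nu$ inside $\mathbb{V}(\mu;f_1,\ldots,f_k;\epsilon)$, completing the argument.

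I expect the main obstacle to be bookkeeping rather than any deep idea: one must arrange the $U_j$ to be simultaneously disjoint, nonempty, of small diameter, with $\mu$-null boundaries, and of strictly positive $\mu$-mass covering $X$ up to a $\mu$-null set, all before fixing $\alpha$. The clean mechanism that makes the proposition work is that the total-mass constraint $\sum_j\nu(U_j)\leq 1$ converts the one-sided lower bounds $\nu(U_j)>\eta_j$ into two-sided approximations, which is precisely why the one-sided form defining $\mathbb{W}$ already captures weak*-convergence.
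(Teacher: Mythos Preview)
The paper does not give its own proof of this proposition; it is quoted from \cite{N-U-A} (Lemma~1 there), so there is no in-paper argument to compare against. Your proof is correct and self-contained: lower semicontinuity of $\mu\mapsto\mu(U)$ on open $U$ gives openness of each $\mathbb{W}$, and your small-diameter disjoint open cover $U_1,\ldots,U_n$ with $\mu$-null remainder, together with the key observation that the one-sided constraints $\nu(U_j)>\mu(U_j)-\alpha$ plus $\sum_j\nu(U_j)\le 1=\sum_j\mu(U_j)$ force two-sided bounds $|\nu(U_j)-\mu(U_j)|<n\alpha$ and $\nu\bigl(X\setminus\bigcup_jU_j\bigr)<n\alpha$, cleanly yields the estimate $\bigl|\int f_i\,d\nu-\int f_i\,d\mu\bigr|\le n^2\alpha\|f_i\|_\infty+n\alpha\|f_i\|_\infty+2\epsilon/3$. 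The only cosmetic point is that your displayed bound could be sharpened (the upper deviation is at most $(n-1)\alpha$), but this is irrelevant since $\alpha$ is chosen last.
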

	
	A $G$-system $(X,G)$ induces a system $(\mathcal{M}(X),G)$, where $g\colon \mathcal{M}(X)\to\mathcal{M}(X)$ defined by
	$(g\mu)(A)\colon=\mu(g^{-1}A)$, for every $g\in G$, $\mu\in\mathcal{M}(X)$ and $A\in\mathcal{B}_X$.
	We call $(\mathcal{M}(X),G)$ the \textbf{induced system} of $(X,G)$.

	Let $(X,G)$ and $(Y,G)$ be two $G$-systems. A continuous map $\pi:(X,G)\to(Y,G)$ is called a \textbf{factor map} 
	between $(X,G)$ and $(Y,G)$ if it is onto and $\pi\circ g=g\circ\pi$ for every $g\in G$. $\pi$ can induce a factor map 
	$\widetilde{\pi}:(\mathcal{M}(X),G)\to(\mathcal{M}(Y),G)$ by
	$$(\widetilde{\pi}\mu)(B)=\mu(\pi^{-1}B)$$ 
	for every $\mu\in\mathcal{M}(X)$ and $B\in\mathcal{B}_Y$. For every $n\in\mathbb{N}$, we denote 
	$$\widetilde{\pi}_n\colon=\tilde{\pi}\vert_{\mathcal{M}_n(X)}\colon \mathcal{M}_n(X)\to\mathcal{M}_n(Y)$$ 
	by the restriction of $\tilde{\pi}$ on $\mathcal{M}_n(X)$. Note that $\widetilde{\pi}_n$ is also a factor map for each $n\in\mathbb{N}$.
	\subsection{Support}\label{desupp} Let $(X,G)$ be a $G$-system, $(\mathcal{M}(X),G)$ be the induced $G$-system of $(X,G)$.
	We denote by $\mathcal{M}(X,G)$ the set of all $G$-invariant measures.
	For $\mu\in\mathcal{M}(X)$, we
	denote by $supp(\mu)$ the \textbf{support of $\mu$}, i.e. the smallest closed subset $W\subseteq X$ such that
	$\mu(W)=1$. We denote by $supp(X,G)$ the \textbf{support of $(X, G)$}, i.e. 
	$$supp(X, G) =\bigcup_{\mu\in\mathcal{M}(X,G)}supp(\mu).$$
	$(X,G)$ is called \textbf{fully supported} if there is an invariant measure $\mu\in\mathcal{M}(X,G)$
	with full support (i.e. $supp(\mu)=X$), equivalently, $supp(X, G) = X$.

	\subsection{Relative uniformly positive topological entropy}
	For a given $G$-system $(X,G)$, a \textbf{cover} of $X$ is a family of Borel subsets of $X$, whose union is $X$. 
	Denote the set of finite covers by $\mathcal{C}_X$. For $n\in\N$ and $\mathcal{U}_1,\mathcal{U}_2,\dots,\mathcal{U}_n\in \mathcal{C}_X$, we denote
	$$\bigvee_{i=1}^{n}\mathcal{U}_i=\bigg\{A_1\cap A_1\cap\dots\cap A_n: A_i\in\mathcal{U}_i,\ i\in[n]\bigg\}.$$
	
	Let $\pi:(X,G)\to (Y,G)$ be a factor map between two $G$-systems and $\mathcal{U}\in\mathcal{C}_X$.
	For any non-empty subset $E$ of $X$, let $N(\mathcal{U},E)$ be the minimum among the cardinalities of the subsets of 
	$\mathcal{U}$ which cover $E$, and define 
	$$N(\mathcal{U}|\pi)=\sup_{y\in Y}N(\mathcal{U},\pi^{-1}(y)).$$
	The \textbf{topological conditional entropy of $\mathcal{U}$ with respect to $\pi$} is defined by
	$$h_{top}(\mathcal{U},G|\pi)=\lim_{n\to\infty}\frac{1}{\vert F_n\vert}\log N(\mathcal{U}_{F_n}|\pi),$$
	where $\mathcal{U}_{F_{n}}=\bigvee_{g\in F_{n}}g^{-1}\mathcal{U}$
	and  $\{F_n\}_{n=1}^{\infty}$ is a F$\phi$lner sequence of $G$. It is well known that $h_{top}(\mathcal{U},G\vert\pi)$ is well-defined and is independent of the choice 
	of the F$\phi$lner sequences of $G$. 
	
	Let $\pi:(X,G)\to(Y,G)$ be a factor map between $G$-systems. $\mathcal{U}=\{
	U_{1},\cdots,U_{n}\}\in\mathcal{C}_X$ is said to be \textbf{non-dense-on-$\pi$-fiber} if there is $y\in Y$ such that 
	$\pi^{-1}(y)$ is not contained in any element of $\overline{\mathcal{U}}$ which consists of the closures of 
	elements of $\mathcal{U}$ in $X$. Clearly, if an open cover $\mathcal{U}=\{U_{1},U_{2}\}$ is non-dense-on-$\pi$-fiber, then $\pi(U_{1})\cap\pi(U_{2})\neq\emptyset$. 
	We say $(X,G)$ or $\pi$ has \textbf{relative uniformly positive entropy} (rel-u.p.e for short), if for any
	non-dense-on-$\pi$-fiber open cover $\mathcal{U}$ of $X$ with two elements, we have $h_{top}(\mathcal{U},G|\pi)>0$.
	
	For $n\in\mathbb{N}$ and $G$-systems $(Z_i,G)$, $i\in[n]$, we set
	$$\prod_{i\in[n]}Z_i=\big\{(z_1,z_2,\cdots,z_n)\colon\ z_i\in Z_i;\  i\in[n]\big\}$$
	and
	$$g(z_1,z_2,\cdots,z_n)=(gz_1,gz_2,\cdots,gz_n)$$
	for every $g\in G$ and $z_i\in Z_i$ for $i\in[n]$. Clearly, $(\prod_{i\in[n]}Z_i,G)$ is also a $G$-system.
	When $Z_i=Z$ for all $i\in[n]$, we write $\prod_{i\in[n]}Z_i$ as $Z^{(n)}$. Let $\pi_i:(X_i,G)\to (Y_i,G)$ be 
	factor maps between $G$-systems for $i\in[n]$. Then $\{\pi_i\}_{i\in[n]}$ induce a factor map 
	$$\prod_{i\in[n]}\pi_i\colon (\prod_{i\in[n]}X_i,G) \to (\prod_{i\in[n]}Y_i,G)$$ 
	by 
	$$\prod_{i\in[n]}\pi_i(x_1,x_2,\cdots,x_n)=(\pi_1 x_1,\pi_2 x_2,\cdots,\pi_n x_n)$$ 
	for every $(x_1,x_2,\cdots,x_n)\in \prod_{i\in[n]}X_i$. When $\pi_i=\pi$ for all $i\in[n]$, we write 
	$\prod_{i\in[n]}\pi_i$ as $\pi^{(n)}$.
	In \cite{Huang-Ye-Zhang-1}, Huang, Ye and Zhang showed that the finite product of rel-u.p.e factor 
	maps between $\mathbb{Z}$-systems has rel-u.p.e. It also holds for $G$-systems. 
	
	\begin{thm}\label{product is upe}Let $\pi_i\colon (X_i,G)\to (Y_i,G)$ be a factor map between two $G$-systems 
		and $supp(Y_i)=Y_i$ for $i=1,2$. Then $\pi_1$ and $\pi_2$ have rel-u.p.e if and only if 
		$\pi_1\times \pi_2\colon (X_1\times X_2,G)\to (Y_1\times Y_2,G)$ has rel-u.p.e.
	\end{thm}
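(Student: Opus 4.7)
The plan is to handle the two implications separately. For the direction that rel-u.p.e of $\pi_1\times\pi_2$ implies rel-u.p.e of each $\pi_i$, the full support hypothesis is not required. For the reverse direction, I would transplant the argument of Huang, Ye and Zhang from \cite{Huang-Ye-Zhang-1} to the amenable setting, replacing the $\mathbb{Z}$-local entropy theory by its amenable group counterpart from \cite{Huang-Ye-Zhang-2, Kerr-Li-1}.

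For the easy direction, take a non-dense-on-$\pi_1$-fiber open two-cover $\mathcal{U}=\{U_1,U_2\}$ of $X_1$ and form the lifted cover $\widetilde{\mathcal{U}}=\{U_1\times X_2,\ U_2\times X_2\}$ of $X_1\times X_2$. Since $(\pi_1\times\pi_2)^{-1}(y_1,y_2)=\pi_1^{-1}(y_1)\times\pi_2^{-1}(y_2)$, any fiber $\pi_1^{-1}(y_1)$ not contained in either $\overline{U_1}$ or $\overline{U_2}$ yields a $(\pi_1\times\pi_2)$-fiber with the analogous property for $\widetilde{\mathcal{U}}$. Moreover, the second coordinate contributes no refinement, so one checks directly that $N((\widetilde{\mathcal{U}})_{F_n}\mid\pi_1\times\pi_2)=N(\mathcal{U}_{F_n}\mid\pi_1)$ for every F{\o}lner set $F_n$. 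Hence $h_{top}(\widetilde{\mathcal{U}},G\mid\pi_1\times\pi_2)=h_{top}(\mathcal{U},G\mid\pi_1)$, and positivity of the former yields positivity of the latter; the case of $\pi_2$ is symmetric.

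For the converse, I would reformulate rel-u.p.e via relative entropy pairs: $\pi$ has rel-u.p.e if and only if every off-diagonal pair in the fibered product $X\times_{Y}X$ is a relative entropy pair (equivalently, a relative IE-pair) over $\pi$. Given a non-dense-on-$(\pi_1\times\pi_2)$-fiber open two-cover $\mathcal{W}=\{W_1,W_2\}$ of $X_1\times X_2$, the non-density produces points $(x_1,x_2),(x_1',x_2')$ in a common $(\pi_1\times\pi_2)$-fiber with $(x_1,x_2)\notin\overline{W_1}$ and $(x_1',x_2')\notin\overline{W_2}$; in particular $x_i,x_i'$ lie in a common $\pi_i$-fiber for $i=1,2$. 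By the assumed rel-u.p.e of each $\pi_i$, each coordinate pair $(x_i,x_i')$ is a relative entropy pair over $\pi_i$. The aim is to show that the product pair $((x_1,x_2),(x_1',x_2'))$ is a relative entropy pair over $\pi_1\times\pi_2$, after which positivity of $h_{top}(\mathcal{W},G\mid\pi_1\times\pi_2)$ follows by a standard cover-refinement estimate separating $\overline{W_1}$ from $\overline{W_2}$.

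The main obstacle is the product lemma for relative entropy pairs. The Kerr--Li characterization identifies relative entropy pairs with relative IE-pairs, witnessed by positive-density independence sets in $G$ along F{\o}lner exhaustions. A first technical difficulty is that the F{\o}lner subsequences witnessing independence for the two coordinate pairs may differ; using Theorem \ref{amenable converges} one can pass to a common F{\o}lner exhaustion on which both independence densities are realised. The full support assumption $\supp(Y_i)=Y_i$ enters at the measure-theoretic level: it guarantees the existence of fully supported $\nu_i\in\mathcal{M}(Y_i,G)$ together with invariant lifts $\mu_i\in\mathcal{M}(X_i,G)$ satisfying $(\widetilde{\pi_i})_*\mu_i=\nu_i$ whose disintegrations detect the given entropy pairs, so that the relatively independent joining $\mu_1\otimes_{\nu_1\times\nu_2}\mu_2$ has positive relative measure-theoretic entropy over the product base. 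Translating back to the topological side via the variational principle for relative entropy pairs yields the desired positive-density independence set for the product pair, completing the argument.
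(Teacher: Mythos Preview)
Your easy direction is fine and matches the paper's implicit treatment. The hard direction, however, contains a genuine gap. You write that ``each coordinate pair $(x_i,x_i')$ is a relative entropy pair over $\pi_i$,'' but this fails precisely when one coordinate pair is diagonal: if $u_1=(x_1,z)$ and $u_2=(x_2,z)$ with $x_1\neq x_2$ but $z_1=z_2=z$, then $(z,z)$ lies on $\Delta(X_2)$ and is \emph{not} an entropy pair for $\pi_2$, so rel-u.p.e of $\pi_2$ gives you nothing directly. Your measure-theoretic sketch does not address this, and the phrase ``relatively independent joining $\mu_1\otimes_{\nu_1\times\nu_2}\mu_2$'' is not well-formed for product systems over a product base; the natural object is simply $\mu_1\times\mu_2$, and it is not clear how a variational principle would recover the missing coordinate independence.

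The paper's proof is both simpler and handles this case cleanly. It works entirely at the level of independence sets (Corollary~\ref{rel-entropy-pair}), with no detour through measures or joinings. When both coordinate pairs are off-diagonal (Case~1), one iterates: first extract an independence set $E\subseteq F$ of density $>c_1$ for $(U_1,U_2)$ relative to $\pi_1$, then inside $E$ extract $F_0$ of density $>c_2$ for $(V_1,V_2)$ relative to $\pi_2$; since independence sets are hereditary, $F_0$ witnesses independence for the product pair with density $>c_1c_2$. There is no issue of mismatched F{\o}lner sequences, because by Remark~\ref{rem1} the relative independence density is an infimum over \emph{all} finite $F\subseteq G$. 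For the diagonal case (Case~2), the full support hypothesis is used via Proposition~\ref{supp}: rel-u.p.e of $\pi_2$ together with $\supp(Y_2)=Y_2$ forces $\supp(X_2)=X_2$, so some invariant measure gives $V=V_1\cap V_2$ positive mass, and then Lemma~\ref{B_1} yields positive density of the return set $\mathcal{P}_V$, which substitutes for the missing second-coordinate independence set in the iteration.
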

	We will give a proof of Theorem \ref{product is upe} in Appendix \ref{B} (see Theorem \ref{Bpthm}).
	\section{$\pi$ has rel-u.p.e if and only if $\widetilde{\pi}_n$ has rel-u.p.e}\label{section 3}
	Let $X$ be a compact metric space and $\rho_X$ be a compatible metric for $X$. We denote
	$B_{\rho_X}(x,\delta)=\{y\in X\colon \rho_X(x,y)<\delta\}$ for $x\in X$ and $\delta>0$, and denote 
	$$\Delta(X)=\{(x,x)\colon x\in X\}.$$
    For $(x_1,x_2)\in X\times X\backslash\Delta(X)$ and $\mathcal{U}=\{U_1,U_2\}\in\mathcal{C}_X$, we say $\mathcal{U}$ is an \textbf{admissible cover of $X$ with respect to 
		$(x_1,x_2)$}, if for any $i\in[2]$, one has $\{x_1,x_2\}\nsubseteq\overline{U_i}$. Let $\pi:(X,G)\to (Y,G)$ be a factor 
	map between two $G$-systems. $(x_1,x_2)\in 
	X\times X\backslash\Delta(X)$ is called a \textbf{entropy pair relevant to $\pi$}, if for any admissible cover 
	$\mathcal{U}$ with respect to $(x_1,x_2)$ we have $h_{top}(\mathcal{U},G|\pi)>0$. Denote by $E(X,G|\pi)$ the set of 
	all entropy pairs relevant to $\pi$. Let 
	$$R_{\pi}=\{(x_1,x_2)\in X\times X \colon\pi(x_1)=\pi(x_2)\}.$$
	It is easy to
	see that $E(X,G|\pi)\subseteq R_{\pi}\setminus\Delta(X)$, and $\pi$ has rel-u.p.e iff $E(X,G|\pi)=R_{\pi}\setminus
	\Delta(X)$.

	Concept of dynamical independence is introduced in \cite[Definition 2.1]{Kerr-Li-1}. Now we consider its relative version.
	Let $\pi\colon (X,G)\to (Y,G)$ be a factor map between two $G$-systems. For any $n\in\mathbb{N}$ and a tuple
	$\mathcal{V}=(V_1,V_2,\cdots,V_n)$ of subsets of $X$, we say $J\subseteq G$ is an 
	\textbf{independence set of $\mathcal{V}$ with respect to $\pi$}, if for every non-empty finite subset 
	$I\subset J$ there exists $y\in Y$ such that 
	$$\pi^{-1}(y)\cap \bigcap_{g\in I}g^{-1}V_{\sigma(g)}\neq\emptyset$$ 
	holds for every $\sigma\in[n]^{I}$. We denote by $\mathcal{P}_{\mathcal{V}}^{\pi}$ the set of all 
	independence sets of $\mathcal{V}$ with respect to $\pi$. 
	
	\begin{rem}\label{rem1}
	For every $n\in\mathbb{N}$ and a tuple
	$\mathcal{V}=(V_1,V_2,\cdots,V_n)$ of subsets of $X$, if we set 
	$$\mathcal{I}_{\mathcal{V}}\colon \mathcal{F}(G)\to \mathbb{R}; \ \ 
	\mathcal{I}_{\mathcal{V}}(F)\colon=\max_{I\subseteq F,I\in \mathcal{P}_{\mathcal{V}}^{\pi}}\vert I\vert,$$
	then by Theorem \ref{amenable converges}, $\mathcal{I}_{\mathcal{V}}(F)/\vert F\vert$ convergences as $F$ becomes more and more invariant and this limit is equal
	to $\inf_{F}\frac{\mathcal{I}_{\mathcal{V}}(F)}{\vert F\vert}$, where $F$ ranges over 
	$\mathcal{F}(G)$. When this limit is positive, we say $\mathcal{V}$ is \textbf{independent with respect to $\pi$}. 
	\end{rem}
	The next lemma follows Lemma 3.4 in \cite{Kerr-Li-1} (see also
	\cite[Theorem 7.4]{Huang-Ye}).
	\begin{lem}\label{independence} 
		Let $\pi: (X,G)\to (Y,G)$ be a factor map between two $G$-systems, and $V_1, V_2$ be two disjoint subsets of $X$.
		If we set $\mathcal{U}=\{X\setminus V_1, X\setminus V_2\}$, then $h_{top}(\mathcal{U},G|\pi)>0$ if and
		only if $\{V_1,V_2\}$ is independent with respect to $\pi$.
	\end{lem}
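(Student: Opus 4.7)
The plan is to prove the two directions separately, with the forward direction (independence implies positive entropy) being a direct counting argument and the reverse direction requiring a combinatorial shattering lemma of Sauer--Shelah--Karpovsky--Milman type, which is the main obstacle.

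For the direction ``$\{V_1,V_2\}$ independent w.r.t.\ $\pi$ implies $h_{top}(\mathcal{U},G|\pi)>0$'', I would fix $F\in\mathcal{F}(G)$ and pick a maximal independence set $J\subseteq F$ with $|J|=\mathcal{I}_{\mathcal{V}}(F)$. By definition, there is $y\in Y$ such that for every $\sigma\in[2]^J$ we can choose $x_\sigma\in\pi^{-1}(y)$ with $gx_\sigma\in V_{\sigma(g)}$ for all $g\in J$. Each element of $\mathcal{U}_F$ has the form $A_\tau=\bigcap_{g\in F}g^{-1}(X\setminus V_{\tau(g)})$ for some $\tau\in[2]^F$, and since $V_1\cap V_2=\emptyset$, the condition $x_\sigma\in A_\tau$ forces $\tau(g)\neq\sigma(g)$ (hence $\tau(g)=3-\sigma(g)$) for every $g\in J$. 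Thus distinct $\sigma$ require distinct elements of $\mathcal{U}_F$ to be covered, giving $N(\mathcal{U}_F|\pi)\geq 2^{|J|}=2^{\mathcal{I}_{\mathcal{V}}(F)}$. Taking $\log$ and dividing by $|F|$, Theorem~\ref{amenable converges} and Remark~\ref{rem1} yield $h_{top}(\mathcal{U},G|\pi)\geq (\log 2)\cdot\inf_F \mathcal{I}_{\mathcal{V}}(F)/|F|>0$.

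For the converse, assume $h_{top}(\mathcal{U},G|\pi)=c>0$. For a F$\phi$lner sequence $\{F_n\}$ and $n$ large, pick $y_n\in Y$ so that $N(\mathcal{U}_{F_n},\pi^{-1}(y_n))\geq e^{(c/2)|F_n|}$. For each $x\in\pi^{-1}(y_n)$, record the itinerary $\omega_x\in\{0,1,2\}^{F_n}$ with $\omega_x(g)=i$ if $gx\in V_i$ ($i=1,2$) and $\omega_x(g)=0$ otherwise; let $\Omega_n=\{\omega_x:x\in\pi^{-1}(y_n)\}$. A subset $S\subseteq[2]^{F_n}$ indexes a subcover iff for every $\omega\in\Omega_n$ some $\sigma\in S$ satisfies $\sigma(g)\neq\omega(g)$ whenever $\omega(g)\neq 0$; hence the minimal $|S|$ (which equals $N(\mathcal{U}_{F_n},\pi^{-1}(y_n))$) is bounded above by the number of ``extensions-to-$[2]$'' of the $\omega$'s seen. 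The exponential growth of $N$ forces a correspondingly combinatorially rich $\Omega_n$.

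The main obstacle is to convert this richness into an independence set of positive density. I would invoke the density version of the Sauer--Shelah lemma used by Kerr--Li (\cite[Lemma~3.3]{Kerr-Li-1}) in the $\{0,1,2\}$-valued form: there exist $b>0$ and $J_n\subseteq F_n$ with $|J_n|\geq b|F_n|$ such that $\Omega_n$ restricted to $J_n$ contains every $[2]$-valued function, i.e., $\{1,2\}^{J_n}\subseteq\Omega_n|_{J_n}$. Unpacking the meaning of shattering: for every $\sigma\in[2]^{J_n}$ there is $x\in\pi^{-1}(y_n)$ with $gx\in V_{\sigma(g)}$ for every $g\in J_n$, which is exactly the statement that $J_n\in\mathcal{P}_{\mathcal{V}}^{\pi}$ (witnessed by $y_n$). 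Thus $\mathcal{I}_{\mathcal{V}}(F_n)/|F_n|\geq b$ for all large $n$, so $\{V_1,V_2\}$ is independent with respect to $\pi$ by Remark~\ref{rem1}. Verifying that the Kerr--Li combinatorial lemma applies in this relative amenable setting (with the $y_n$ varying but the density estimate uniform) is the delicate part I would need to check carefully.
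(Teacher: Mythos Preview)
The paper does not give its own proof of this lemma; it simply cites Kerr--Li \cite[Lemma~3.4]{Kerr-Li-1} and Huang--Ye \cite[Theorem~7.4]{Huang-Ye}. Your sketch is exactly the Kerr--Li argument adapted to the relative amenable setting, and both directions are correct in outline.

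One point in the reverse direction deserves more care. The hypothesis actually fed into Kerr--Li's Lemma~3.3 is not the cardinality $|\Omega_n|$ but the covering quantity $N(\mathcal{U}_{F_n},\pi^{-1}(y_n))$ itself. Large $|\Omega_n|$ alone does \emph{not} force $\{1,2\}$-shattering: for instance $\Omega_n=\{0,1\}^{F_n}$ has $2^{|F_n|}$ elements yet never displays the symbol $2$ on any coordinate, so no $J_n$ with $\{1,2\}^{J_n}\subseteq\Omega_n|_{J_n}$ exists. This pathology, however, cannot arise when the covering number is large (in that example $N=1$, witnessed by the single element $(2,\dots,2)$), and Kerr--Li's Lemma~3.3 is formulated precisely in terms of the covering condition so as to exploit this. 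Your phrase ``bounded above by the number of extensions-to-$[2]$'' is therefore the wrong intermediate quantity; what you want is that $S\subseteq[2]^{F_n}$ fails to cover $\Omega_n$ whenever $|S|<e^{(c/2)|F_n|}$, which is exactly the input the combinatorial lemma needs. Once this is straightened out the argument goes through, and the passage to the relative and amenable settings is harmless since the combinatorics are purely finitary and performed pointwise over a F{\o}lner sequence.
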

	
	Let $\pi: (X,G)\to (Y,G)$ be a factor map between two $G$-systems.
	For any $(x_1,x_2)\in X\times X\backslash\Delta(X)$, disjoint open subsets $V_1, V_2$ of $X$ with $x_i\in V_i$ for $i\in[2]$,
    $\mathcal{V}=\{X\setminus V_1, X\setminus V_2\}$ is an admissible cover 
	of $X$ with respect to $(x_1,x_2)$. Then by Lemma \ref{independence}, we immediately have the following corollary.
	\begin{cor}\label{rel-entropy-pair}
		Let $\pi: (X,G)\to (Y,G)$ be a factor map between two $G$-systems and $(x_1,x_2)\in X\times X\backslash\Delta(X)$. 
		Then $(x_1,x_2)\in E(X,G|\pi)$ if and only if for any disjoint open subsets $V_1, V_2$ of $X$ with $x_i\in V_i$ for $i=1,2$, 
		$\{V_1,V_2\}$ is independent with respect to $\pi$.
	\end{cor}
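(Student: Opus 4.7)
The plan is to derive both directions of the corollary directly from Lemma \ref{independence} via the correspondence $\mathcal{U}=\{X\setminus V_1,X\setminus V_2\}\leftrightarrow\{V_1,V_2\}$, together with an easy monotonicity observation: if $V_i\subseteq V_i'$ for $i\in[2]$, then any independence set for $\{V_1,V_2\}$ is also an independence set for $\{V_1',V_2'\}$, because the intersections $\bigcap_{g\in I}g^{-1}V_{\sigma(g)}$ only grow when the $V_i$'s are enlarged.

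For the forward direction, given $(x_1,x_2)\in E(X,G|\pi)$ and disjoint open $V_1\ni x_1$, $V_2\ni x_2$, I would form $\mathcal{V}=\{X\setminus V_1,X\setminus V_2\}$. Since $V_1\cap V_2=\emptyset$, $\mathcal{V}$ is a cover; and since each $X\setminus V_i$ is closed (so equal to its own closure) and misses $x_i$, we get $\{x_1,x_2\}\not\subseteq\overline{X\setminus V_i}$, making $\mathcal{V}$ admissible with respect to $(x_1,x_2)$. The hypothesis then yields $h_{top}(\mathcal{V},G|\pi)>0$, and Lemma \ref{independence} gives independence of $\{V_1,V_2\}$ with respect to $\pi$.

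For the converse, let $\mathcal{U}=\{U_1,U_2\}$ be an admissible cover with respect to $(x_1,x_2)$. By Lemma \ref{independence}, it suffices to show that $\{X\setminus U_1,X\setminus U_2\}$ is independent with respect to $\pi$. The key combinatorial observation is that, after possibly swapping the labels of $U_1$ and $U_2$, we may assume $x_1\notin\overline{U_1}$ and $x_2\notin\overline{U_2}$: admissibility gives, for each $i$, some index $k_i\in[2]$ with $x_{k_i}\notin\overline{U_i}$; and the covering property forces $\overline{U_1}\cup\overline{U_2}=X$, which rules out $k_1=k_2$ (otherwise the same $x_{k_1}$ would lie outside the union). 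Once this normalization is in place, set $V_1:=X\setminus\overline{U_1}$ and $V_2:=X\setminus\overline{U_2}$; these are disjoint open neighborhoods of $x_1$ and $x_2$ respectively, so by hypothesis $\{V_1,V_2\}$ is independent. Since $V_i\subseteq X\setminus U_i$, the monotonicity observation upgrades this to independence of $\{X\setminus U_1,X\setminus U_2\}$, and Lemma \ref{independence} finishes the proof.

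The only non-trivial point is the small combinatorial step showing that admissibility plus the covering property allows the normalization $x_i\notin\overline{U_i}$; every other step is a direct application of Lemma \ref{independence} or the definition of an independence set. Since the author announces the corollary as immediate from Lemma \ref{independence}, I expect this to be the intended route.
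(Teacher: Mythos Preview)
Your argument is correct and follows the same route the paper intends: both directions go through Lemma~\ref{independence} via the correspondence $\{V_1,V_2\}\leftrightarrow\{X\setminus V_1,X\setminus V_2\}$. The paper only spells out the forward direction explicitly (noting that $\{X\setminus V_1,X\setminus V_2\}$ is admissible and then invoking Lemma~\ref{independence}), and leaves the converse to the reader; your normalization step ($x_i\notin\overline{U_i}$ after relabeling) together with the monotonicity of independence sets is exactly what is needed to fill that in, and it is carried out cleanly.
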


	We note that, for any two nonempty finite sets $H$, $W$, if $H\subseteq W$ and $S\subset\{1,2\}^{W}$, one has
	\begin{align}\label{explain}
		\big\vert S\vert_H\big\vert\geq\frac{\vert S\vert}{2^{\vert W\vert-\vert H\vert}},
	\end{align}
	where $S\vert_H$ is the restriction of $S$ on $H$, i.e.
	$$S\vert_H=\bigg\{\sigma\in\{1,2\}^H\colon\text{there\ exists}\ \sigma'\in S\ \text{such\ that}\ 
	\sigma(h)=\sigma'(h)\ \text{for\ all}\ h\in H\bigg\}.$$
	The following consequence of Karpovsky and Milman's generalization of the 
	Sauer-Perles-Shelah lemma \cite{Karpovsky-Milman,Sauer,Shelah} is well known, 
	one can also see \cite[Lemma 3.5]{Kerr-Li-1}.
	
	\begin{lem}\label{F_S}
		Given $k\geq 2$ and $\lambda>1$ there exists a constant $c>0$ such that, for all $n\in\mathbb{N}$, if 
		$S\subseteq[k]^{[n]}$ satisfies $\vert S\vert\geq ((k-1)\lambda)^n$ then there is an 
		$I\subseteq[n]$ with $\vert I\vert\geq cn$ and $S\vert_I=[k]^I$.
	\end{lem}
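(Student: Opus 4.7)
The plan is to derive this lemma from the Karpovsky--Milman generalization of the Sauer--Shelah shatter function inequality, combined with a standard entropy estimate on partial binomial sums. Let $d$ denote the maximum cardinality of a subset $I \subseteq [n]$ such that $S\vert_I = [k]^I$; the task is to show that $d \geq cn$ for some constant $c = c(k,\lambda) > 0$ depending only on $k$ and $\lambda$.

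First I would invoke the Karpovsky--Milman theorem, which yields the upper bound
\[
|S| \;\leq\; \sum_{i=0}^{d}\binom{n}{i}(k-1)^{i}.
\]
Combining this with the hypothesis $|S| \geq ((k-1)\lambda)^n$ and the standard entropy inequality $\sum_{i=0}^{d}\binom{n}{i}\leq 2^{nH_2(d/n)}$, valid whenever $d \leq n/2$ (the case $d > n/2$ is trivial, giving $c = 1/2$), I obtain
\[
((k-1)\lambda)^n \;\leq\; (k-1)^{d}\cdot 2^{\,nH_2(d/n)},
\]
where $H_2(t) = -t\log_2 t - (1-t)\log_2(1-t)$ is the binary entropy function.

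Taking $\log_2$, dividing by $n$ and rearranging then gives
\[
\log_2 \lambda \;\leq\; H_2(d/n) - \left(1 - \tfrac{d}{n}\right)\log_2(k-1).
\]
Viewed as a continuous function of $t = d/n \in [0,1]$, the right-hand side equals $-\log_2(k-1) \leq 0$ at $t = 0$, hence is strictly less than the positive constant $\log_2 \lambda$ there. By continuity there exists $c = c(k,\lambda) > 0$ such that the right-hand side remains strictly less than $\log_2 \lambda$ for all $t < c$; this forces $d/n \geq c$, which is the desired conclusion.

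The main obstacle I expect is stating the Karpovsky--Milman shatter bound in precisely the form above (different references phrase it slightly differently, and one must verify that the version used is valid for all $S\subseteq[k]^{[n]}$ with the stated $d$); once this ingredient is in hand, the remainder is a routine manipulation of entropy inequalities. Finitely many small values of $n$ where the asymptotic estimates are not sharp can be absorbed into the constant by shrinking $c$ if necessary.
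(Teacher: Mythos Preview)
The paper does not prove this lemma at all; it simply records it as a well-known consequence of the Karpovsky--Milman generalization of the Sauer--Perles--Shelah lemma and refers the reader to \cite{Karpovsky-Milman,Sauer,Shelah} and \cite[Lemma~3.5]{Kerr-Li-1}. Your plan to supply an actual argument via the shatter inequality plus an entropy bound on the binomial tail is exactly the standard route, so in spirit you are aligned with what the paper invokes.

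However, the form of the Karpovsky--Milman bound you wrote is incorrect: the exponent on $(k-1)$ should be $n-i$, not $i$. The correct statement is that if $d$ is the largest cardinality of a set $I\subseteq[n]$ with $S\vert_I=[k]^I$, then
\[
|S|\;\le\;\sum_{i=0}^{d}\binom{n}{i}(k-1)^{\,n-i}.
\]
Your version fails already for $k=3$, $n=2$, $d=1$: take $S=[3]^{2}\setminus\{(3,3)\}$, which has $|S|=8$ and does not shatter $\{1,2\}$, whereas your bound would give $\binom{2}{0}+2\binom{2}{1}=5$. (You anticipated precisely this hazard in your closing paragraph.)

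Fortunately the correction makes the rest of your argument \emph{simpler}, not harder. Dividing the correct inequality by $(k-1)^n$ and using $(k-1)^{-i}\le 1$ gives
\[
\lambda^{n}\;\le\;\sum_{i=0}^{d}\binom{n}{i}(k-1)^{-i}\;\le\;\sum_{i=0}^{d}\binom{n}{i}\;\le\;2^{\,nH_2(d/n)}\qquad(d\le n/2),
\]
hence $\log_2\lambda\le H_2(d/n)$. Since $H_2(0)=0<\log_2\lambda$ and $H_2$ is continuous, this forces $d/n\ge c(k,\lambda)>0$, exactly as you argued (and the extra $-\bigl(1-\tfrac{d}{n}\bigr)\log_2(k-1)$ term in your displayed inequality is no longer needed).
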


	Theorem \ref{main} follows from Theorem \ref{pi-pi_n}, Theorem
	\ref{mainthm1} and Theorem \ref{mainthm2}.

	\begin{thm}\label{pi-pi_n} Let $n\in\mathbb{N}$, $\pi\colon (X,G)\to (Y,G)$ be a factor map between two $G$-systems,	
	$\widetilde{\pi}\colon (\mathcal{M}(X),G)\to (\mathcal{M}(Y),G)$ be the factor map induced by $\pi$ and                                                                                                                                            
	$\widetilde{\pi}_n\colon$ $(\mathcal{M}_n(X),G)$ $\to(\mathcal{M}_n(Y),G)$ be the restriction of $\widetilde{\pi}$
	on $\mathcal{M}_n(X)$. When $supp(Y)$ $=Y$, the following are equivalent
	\begin{itemize}
		\item[(1)] $\pi$ has rel-u.p.e; 
		\item[(2)]  $\widetilde{\pi}_n$ has rel-u.p.e for some $n\in\mathbb{N}$;
		\item[(3)] $\widetilde{\pi}_n$ has rel-u.p.e for every $n\in\mathbb{N}$.
   \end{itemize}
	\end{thm}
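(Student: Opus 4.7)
The plan is to establish the cycle $(3) \Rightarrow (2) \Rightarrow (1) \Rightarrow (3)$, with $(3) \Rightarrow (2)$ being immediate.

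For $(1) \Rightarrow (3)$, I would exploit the natural $G$-equivariant continuous surjection $Q_n^X \colon X^{(n)} \to \mathcal{M}_n(X)$ sending $(x_1,\ldots,x_n)$ to $\tfrac{1}{n}\sum_i \delta_{x_i}$, together with its analogue $Q_n^Y$, which satisfy the commutative relation $\widetilde{\pi}_n \circ Q_n^X = Q_n^Y \circ \pi^{(n)}$. Since $\mathrm{supp}(Y)=Y$ yields $\mathrm{supp}(Y^{(n)})=Y^{(n)}$ (product of fully supported $G$-invariant measures), an iterated application of Theorem \ref{product is upe} gives that $\pi^{(n)}$ has rel-u.p.e. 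Given a pair $(\mu_1,\mu_2) \in R_{\widetilde{\pi}_n} \setminus \Delta$, I would write $\mu_i = \tfrac{1}{n}\sum_j \delta_{x_j^i}$ and reorder so that $\pi(x_j^1) = \pi(x_j^2)$ for every $j$, producing $\vec{x}^1 \neq \vec{x}^2$ with $\pi^{(n)}(\vec{x}^1)=\pi^{(n)}(\vec{x}^2)$, hence an entropy pair for $\pi^{(n)}$. For any disjoint open $V_i \ni \mu_i$, the preimages $W_i = (Q_n^X)^{-1}(V_i) \ni \vec{x}^i$ are disjoint and open, and Corollary \ref{rel-entropy-pair} makes $\{W_1,W_2\}$ independent with respect to $\pi^{(n)}$. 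Any such independence set pushes forward through $Q_n^X$ (using $G$-equivariance and surjectivity) to an independence set of $\{V_1,V_2\}$ with respect to $\widetilde{\pi}_n$.

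For $(2) \Rightarrow (1)$, given $(x_1,x_2) \in R_{\pi}\setminus\Delta$, the Dirac measures $\delta_{x_1},\delta_{x_2} \in \mathcal{M}_n(X)$ form a pair in $R_{\widetilde{\pi}_n}\setminus\Delta$ and are hence an entropy pair. For disjoint open $V_1,V_2 \subseteq X$ with $x_i \in V_i$, I would work with the weak*-open sets
\[
\widetilde{V}_i := \bigl\{\mu \in \mathcal{M}_n(X) : \mu(V_i) > 1-\tfrac{1}{2n}\bigr\},
\]
which contain $\delta_{x_i}$ and are disjoint (since $V_1 \cap V_2 = \emptyset$ forces $\mu(V_1)+\mu(V_2) \leq 1 < 2 - \tfrac{1}{n}$). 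The crucial quantitative observation is that for $\mu = \tfrac{1}{n}\sum_j \delta_{z_j} \in \mathcal{M}_n(X)$, the inequality $\mu(V_i) > 1-\tfrac{1}{2n}$ translates to $\lvert\{j : z_j \in V_i\}\rvert > n-\tfrac{1}{2}$, forcing \emph{every} atom to lie in $V_i$. Consequently, if $\mu \in \widetilde{\pi}_n^{-1}(\nu) \cap \bigcap_{g \in I} g^{-1}\widetilde{V}_{\sigma(g)}$ witnesses independence of $\{\widetilde{V}_1,\widetilde{V}_2\}$ for $\widetilde{\pi}_n$, then every single atom $z_j$ of $\mu$ lies in $\pi^{-1}(\pi(z_j)) \cap \bigcap_{g \in I} g^{-1}V_{\sigma(g)}$, converting independence of $\{\widetilde{V}_1,\widetilde{V}_2\}$ directly into independence of $\{V_1,V_2\}$ with respect to $\pi$.

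The main obstacle I foresee is calibrating the quantitative threshold in the definition of $\widetilde{V}_i$: it must be close enough to $1$ so that the shortfall $\tfrac{1}{2n}$ strictly undercuts the minimal atom weight $\tfrac{1}{n}$ (forcing full concentration on a single $V_i$), yet the disjointness constraint must still be secured via the total-mass bound. The choice $1-\tfrac{1}{2n}$ balances both requirements. A secondary technical step is confirming $\mathrm{supp}(Y^{(n)})=Y^{(n)}$ so Theorem \ref{product is upe} applies in the product reduction. The rest is routine bookkeeping, moving between entropy-pair and independence-set formulations via Corollary \ref{rel-entropy-pair} and Lemma \ref{independence}, and checking that the densities of the transferred independence sets remain positive in the sense of Remark \ref{rem1}.
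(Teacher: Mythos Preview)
Your proposal is correct and follows essentially the same approach as the paper: the same cycle $(3)\Rightarrow(2)\Rightarrow(1)\Rightarrow(3)$, the same reduction of $(1)\Rightarrow(3)$ to the rel-u.p.e of $\pi^{(n)}$ via Theorem~\ref{product is upe} and the equivariant map $X^{(n)}\to\mathcal{M}_n(X)$, and the same threshold $1-\tfrac{1}{2n}$ trick for $(2)\Rightarrow(1)$. The only cosmetic difference is that you take $W_i=(Q_n^X)^{-1}(V_i)$ directly, while the paper first shrinks to a product box $V_1^i\times\cdots\times V_n^i$ inside that preimage; both choices work for the same reason.
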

	\begin{proof}$(3)\Rightarrow (2)$ is trivial. We will prove $(1)\Rightarrow (3)$ and $(2)\Rightarrow (1)$.
	
		$(1)\Rightarrow (3)$ Assume that $\pi$ has rel-u.p.e. For every fixed $1\le n< \infty$, in order to obtain that $\widetilde{\pi}_n$ has rel-u.p.e, it is sufficient to prove that $E(\mathcal{M}_n(X),G|\widetilde{\pi}_n)\supseteq 
		R_{\widetilde{\pi}_n}(\mathcal{M}_{n}(X),G)$ $ \backslash\Delta(\mathcal{M}_n(X))$. 
		
		Let $(\mu_1,\mu_2)\in R_{\widetilde{\pi}_n}(\mathcal{M}_n(X),$ $G)\backslash\Delta(\mathcal{M}_n(X))$, and $\widetilde{V}_1$ and 
		$\widetilde{V}_2$ be two disjoint open subsets of $\mathcal{M}_n(X)$ with $\mu_i\in\widetilde{V}_i$ for $i\in[2]$.
        By Corollary \ref{rel-entropy-pair}, we shall show that $\{\widetilde{V}_1,\widetilde{V}_2\}$ is independent with respect to
		 $\widetilde{\pi}_n$.
		
		For $i\in[2]$ and $j\in[n]$, there exist points $x_j^i\in X$ such that $\mu_i=\frac{1}{n}\sum_{j=1}^n\delta_{x_j^i}$.
		We note that the map $\Phi\colon X^{(n)}\to\mathcal{M}(X)$, defined by 
		$$\Phi(z_1,z_2,\cdots,z_n)=\frac{1}{n}\sum_{i=1}^{n}\delta_{z_i}$$
		is continuous. Thus for every $i\in[2]$ and $j\in[n]$ there exists open neighbourhoods $V_j^i$ of $x_j^i$ 
		such that
		$$\mu_i\in\bigg\{\frac{1}{n}\sum_{j=1}^{n}\delta_{z_j}\colon z_j\in V_j^i, j\in[n]\bigg\}\subseteq \widetilde{V}_i.$$
		Since $\widetilde{V}_1\cap\widetilde{V}_2=\emptyset$, if we set $W_i=V^i_1\times V_2^i\times\cdots\times V_n^i$ for $i=1,2$, one has 
		$W_1\cap W_2=\emptyset$.
		Without loss of generality, we can assume that $\pi(x_j^1)=\pi(x_j^2)$ for all $j\in[n]$ since $\widetilde{\pi}_n(\mu_1)=\widetilde{\pi}_n(\mu_2)$.
        Let $\omega_i=(x_1^i,x_2^i,\cdots,x_n^i)\in W_i$ for $i=1,2$.
		Then 
		$$(\omega_1,\omega_2)\in R_{\pi^{(n)}}\setminus\Delta(X^{(n)})=E(X^{(n)},G\vert\pi^{(n)})$$ 
		as $\pi^{(n)}$ has rel-u.p.e by Theorem \ref{product is upe}. Thus $\{W_1,W_2\}$ is independent with respect with $\pi^{(n)}$. We note that
		$\mathcal{P}^{\pi^{(n)}}_{\{W_1,W_2\}}\subseteq\mathcal{P}^{\widetilde{\pi}_n}_{\{\widetilde{V_1},\widetilde{V_2}\}}$. 
		This implies $\{\widetilde{V_1},\widetilde{V_2}\}$ is independent with respect to $\widetilde{\pi}_n$.

		$(2)\Rightarrow (1)$ We assume that $\widetilde{\pi}_n$ has rel-u.p.e for some positive integer 
		$1\le n< \infty$. In the following, we prove that $R_{\pi}\setminus\Delta(X)\subseteq E(X,G\vert\pi)$.
		Let $(x_1,x_2)\in R_{\pi}\setminus\Delta(X)$, $V_1$ and $V_2$ be two disjoint open subsets of $X$ with $x_i\in V_i$, $i=1,2$.
		By Corollary \ref{rel-entropy-pair}, we only need to show that $\{V_1,V_2\}$ is independent with respect to $\pi$.
		
		We set 
		$$\widetilde{V_i}=\bigg\{\mu\in\mathcal{M}_n(X): \mu(V_i)>1-\frac{1}{2n}\bigg\}$$
		for $i=1,2$. Clearly, $\widetilde{V_1}$ and $\widetilde{V_2}$ are disjoint open subsets of $\mathcal{M}_n(X)$ with 
		$\delta_{x_i}\in\widetilde{V_i}$ for $i=1,2$. Since $\widetilde{\pi}_n$ has rel-u.p.e, and 
		$(\delta_{x_1},\delta_{x_2})\in R_{\widetilde{\pi}_n}\setminus\Delta(\mathcal{M}_n(X))=E(\mathcal{M}_n(X),G\vert \widetilde{\pi}_n)$,
		$\{\widetilde{V_1},\widetilde{V_2}\}$ is independent with respect to $\widetilde{\pi}_n$. 
		Then there exists constant $c>0$, such that
		for every fixed $F\in\mathcal{F}(G)$, there exist $I\subseteq F$ with $\vert I\vert>c\vert F\vert$ and $\nu=\frac{1}{n}\sum_{i=1}^{n}\delta_{y_i}\in\mathcal{M}_n(Y)$
		for some $y_i\in Y$ such that
		$$A_{\sigma}\colon=\widetilde{\pi}_n^{-1}(\nu)\cap \bigcap_{g\in I}g^{-1}\widetilde{V}_{\sigma(g)}\neq\emptyset,$$
		for every $\sigma\in\{1,2\}^{I}$.
		
		For every $\sigma\in\{1,2\}^{I}$, and $\mu_{\sigma}=\frac{1}{n}\sum_{i=1}^{n}\delta_{z_i^{\sigma}}\in A_{\sigma}$, we can assume $\pi(z_i^{\sigma})=y_i$ for $i\in[n]$.
		Moreover, for every $g\in I$ one has $g\mu_{\sigma}=\frac{1}{n}\sum_{i=1}^n\delta_{gz_i^{\sigma}}\in\widetilde{V}_{\sigma(g)}$. That is 
		$$\frac{1}{n}\sum_{i=1}^n\delta_{gz_i^{\sigma}}(V_{\sigma(g)})>1-\frac{1}{2n},$$
		which implies $gz_{i}^{\sigma}\in V_{\sigma(g)}$ for every $i\in[n]$. Particularly, 
		$$z_{1}^{\sigma}\in \pi^{-1}(y_1)\cap\bigcap_{g\in I}g^{-1}V_{\sigma(g)},$$
		for every $\sigma\in\{1,2\}^I$. Thus $\{V_1,V_2\}$ is independent with respect to $\pi$. This ends our proof.	
	\end{proof}
	
\section{$\pi$ is rel-u.p.e if and only if $\widetilde{\pi}$ is rel-u.p.e}\label{section 4}
In this section, we will prove $\pi$ is rel-u.p.e if and only if $\widetilde{\pi}$ is rel-u.p.e.
We need the following lemma.

\begin{lem}\label{R_pi-dense}Let $\pi\colon X\to Y$ be a continuous surjective map between two compact metric spaces,
	$\widetilde{\pi}\colon \mathcal{M}(X)\to \mathcal{M}(Y)$ be the map induced by $\pi$ and                                                                                                                                            
	$\widetilde{\pi}_n\colon\mathcal{M}_n(X)\to\mathcal{M}_n(Y)$ be the restriction of $\widetilde{\pi}$
	on $\mathcal{M}_n(X)$.
	Then $\bigcup_{n\in\mathbb{N}}R_{\widetilde{\pi}_n}$ is dense in $R_{\widetilde{\pi}}$. 
\end{lem}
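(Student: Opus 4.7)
The plan is to reduce the density statement to a joining construction followed by empirical-measure approximation. Given $(\mu_1,\mu_2)\in R_{\widetilde{\pi}}$ with common image $\lambda:=\widetilde{\pi}(\mu_1)=\widetilde{\pi}(\mu_2)\in\mathcal{M}(Y)$, I would first build a measure $\eta\in\mathcal{M}(X\times X)$ which is supported on the closed set $R_\pi=\{(x_1,x_2)\in X\times X:\pi(x_1)=\pi(x_2)\}$ and whose two coordinate marginals are $\mu_1$ and $\mu_2$. Concretely, disintegrate $\mu_i$ over the factor $\pi$: since $X,Y$ are compact metric (hence standard Borel) and $\pi$ is continuous, one obtains families $\{\mu_{i,y}\}_{y\in Y}$ of Borel probability measures with $\mu_{i,y}(\pi^{-1}(y))=1$ for $\lambda$-a.e.\ $y$ and $\mu_i=\int_Y \mu_{i,y}\,d\lambda(y)$. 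Set
\[
\eta:=\int_Y \mu_{1,y}\times\mu_{2,y}\,d\lambda(y),
\]
i.e.\ the \emph{relatively independent joining} of $\mu_1$ and $\mu_2$ over $\lambda$. A direct check shows that the two marginals of $\eta$ equal $\mu_1$ and $\mu_2$ respectively, and $\eta$ is concentrated on $R_\pi$.

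The second step is to approximate $\eta$ weak* by empirical measures whose support still lies in $R_\pi$. Since $R_\pi$ is a compact metric space, a standard argument (cover $R_\pi$ by finitely many Borel pieces of small diameter and zero $\eta$-boundary measure, then allocate proportional numbers of Dirac masses to points chosen in each piece; see e.g.\ Parthasarathy's book) produces $n_k\in\mathbb{N}$ and points $w_j^{(k)}=(x_j^{1,k},x_j^{2,k})\in R_\pi$ for $j\in[n_k]$ such that
\[
\eta_k:=\frac{1}{n_k}\sum_{j=1}^{n_k}\delta_{w_j^{(k)}}\longrightarrow \eta
\]
in the weak*-topology on $\mathcal{M}(X\times X)$ as $k\to\infty$.

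Finally, set $\nu_i^{(k)}:=\frac{1}{n_k}\sum_{j=1}^{n_k}\delta_{x_j^{i,k}}\in\mathcal{M}_{n_k}(X)$ for $i=1,2$. Because $w_j^{(k)}\in R_\pi$ means $\pi(x_j^{1,k})=\pi(x_j^{2,k})$ for every $j,k$, one has
\[
\widetilde{\pi}_{n_k}(\nu_1^{(k)})=\frac{1}{n_k}\sum_{j=1}^{n_k}\delta_{\pi(x_j^{1,k})}=\widetilde{\pi}_{n_k}(\nu_2^{(k)}),
\]
so $(\nu_1^{(k)},\nu_2^{(k)})\in R_{\widetilde{\pi}_{n_k}}$. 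The marginal projection $\mathcal{M}(X\times X)\to\mathcal{M}(X)\times\mathcal{M}(X)$ is weak*-continuous, and applying it to $\eta_k\to\eta$ yields $(\nu_1^{(k)},\nu_2^{(k)})\to(\mu_1,\mu_2)$; hence $(\mu_1,\mu_2)$ lies in the closure of $\bigcup_n R_{\widetilde{\pi}_n}$. The only non-routine step is producing the joining $\eta$ concentrated on $R_\pi$; once that is in hand, the empirical approximation and marginal continuity are completely standard. An alternative that avoids explicit disintegration is to note that the set of joinings of $\mu_1,\mu_2$ with common image $\lambda$ is a nonempty weak*-compact convex subset of $\mathcal{M}(R_\pi)$, but the disintegration approach seems cleanest.
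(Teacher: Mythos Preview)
Your proof is correct, but it follows a genuinely different route from the paper's.

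The paper proceeds by hand: given neighbourhoods $\widetilde{W}_1\ni\mu_1$, $\widetilde{W}_2\ni\mu_2$ described by finitely many test functions, it partitions $Y$ into pieces $W_1,\dots,W_K$ so that each $\pi^{-1}(W_i)$ is close to a single fibre $\pi^{-1}(y_i)$, then refines each $\pi^{-1}(W_i)$ into small cells $A_j^i$ centred at points $x_j^i\in\pi^{-1}(y_i)$. The masses $\mu_1(A_j^i)$ and $\mu_2(A_j^i)$ are approximated by rationals with a common large denominator $N$, subject to the constraint that the total approximate mass over each $\pi^{-1}(W_i)$ is the same integer $Q_i$ for both measures (this is possible precisely because $\widetilde{\pi}(\mu_1)=\widetilde{\pi}(\mu_2)$). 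The resulting empirical measures $\widetilde{\mu}_1,\widetilde{\mu}_2\in\mathcal{M}_N(X)$ are built on the \emph{same} atoms $x_j^i$ (only the multiplicities differ), which forces $\widetilde{\pi}_N(\widetilde{\mu}_1)=\widetilde{\pi}_N(\widetilde{\mu}_2)$; the rest is a direct estimate showing $\widetilde{\mu}_i\in\widetilde{W}_i$.

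Your argument replaces all of this combinatorics with two standard tools: the disintegration theorem (to build the relatively independent joining $\eta\in\mathcal{M}(R_\pi)$ with marginals $\mu_1,\mu_2$) and the density of $\bigcup_n\mathcal{M}_n(R_\pi)$ in $\mathcal{M}(R_\pi)$. This is considerably shorter and more conceptual; the paper's approach, by contrast, is self-contained and avoids invoking disintegration, at the cost of several pages of explicit $\varepsilon$-management. One small remark: your closing alternative (``the set of such joinings is nonempty weak*-compact convex'') does not actually bypass disintegration, since the nonemptiness of that set is exactly the content of the joining construction; the disintegration route you gave first is the right justification.
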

\begin{proof}Fix compatible metrics $\rho_X$ for $X$ and $\rho_Y$ for $Y$ respectively. Let $(\mu_1,\mu_2)\in R_{\widetilde{\pi}}$. Without loss of generality, we can assume 
	$\mu_1\neq\mu_2$. For any two disjoint open subsets 
	$\widetilde{V}_1$, $\widetilde{V}_2$ of $\mathcal{M}(X)$ with $\mu_i\in\widetilde{V}_i$ for $i\in[2]$, by 
 (\ref{basis of M(X)-function}) there exist a  constant $r>0$ small enough, integers $L_1$ and $L_2$, 
$f_1,\cdots,f_{L_1}\in C(X)$ and $g_1,\cdots,g_{L_2}\in C(X)$ such that
$$\mu_1\in\widetilde{W}_1:=\bigg\{\mu\in\mathcal{M}(X):\vert\int_X f_id\mu-\int_X f_id\mu_1\vert<r,i\in [L_1]\bigg\}\subseteq\widetilde{V_1},$$
and
$$\mu_2\in\widetilde{W}_2:=\bigg\{\mu\in\mathcal{M}(X):\vert\int_X g_jd\mu-\int_X g_jd\mu_2\vert<r,j\in[L_2]\bigg\}\subseteq\widetilde{V_2}.$$
It is sufficient to prove that $\big(\widetilde{W}_1\times \widetilde{W}_2\big)\cap R_{\widetilde{\pi}_N}\neq\emptyset$ for
some $N\in\mathbb{N}$.

Without loss of generality, we can assume $\Vert f_i\Vert\leq 1$ and $\Vert g_j\Vert\leq 1$ for $i\in[L_1]$ and $j\in[L_2]$.
Moreover, since $f_i,g_j\in C(X)$ for $i\in[L_1]$ and $j\in[L_2]$, there exists $\varepsilon>0$ such that for any 
$x,z\in X$ with $\rho_X(x,z)<\varepsilon$, one has 
\begin{equation}\label{Uniform continuous}
	\begin{array}{l}\vert f_i(x)-f_i(z)\vert<\frac{r}{2},\ \text{for\ every }\ \ i\in [L_1], \\
	\vert g_j(x)-g_j(z)\vert<\frac{r}{2},\ \text{for\ every }\ \ j\in[L_2].
\end{array}\end{equation}

For every $y\in Y$, since  $\pi$ is continuous, one can find an open neighbourhood 
$V_y\subseteq Y$ such that 
$$\pi^{-1}(y)\subseteq\pi^{-1}(V_y)\subseteq\overline{\pi^{-1}(V_y)}\subseteq(\pi^{-1}(y))^{\frac{\varepsilon}{2}},$$ 
where $(\pi^{-1}(y))^{\frac{\varepsilon}{2}}=\{x\in X: \rho_X(x,\{\pi^{-1}(y)\})<\frac{\varepsilon}{2}\}$. Moreover, since $Y$ is compact, there exist $K\in\mathbb{N}$ and pairwise different points 
$y_1,\cdots,y_K$ of $Y$ such that $Y=\cup_{i=1}^{K}V_{y_i}$. Then one can find $t>0$ such that 
$y_i\in B_{\rho_Y}(y_i,t)\subset V_{y_i}$ for any $i\in[K]$ and $\{B_{\rho_Y}(y_1,t),\cdots,B_{\rho_Y}(y_K,t)\}$ are pairwise disjoint.
We set 
$$W_1= V_{y_1}\setminus\bigcup_{i=2}^{K}B_{\rho_Y}(y_i,t) \ \text{and}\  
W_i= V_{y_i}\setminus\bigg(\bigcup_{j=1}^{i-1}V_{y_j}\cup\bigcup_{j=i+1}^{K}B_{\rho_Y}(y_j,t)\bigg)
$$ for $i=2,\cdots,K$. Then $\{W_1,\cdots,W_K\}$ is a partition of $Y$ and $y_i\in W_i\subseteq  V_{y_i}$ for $i\in[K]$.
Moreover, 
$\{\pi^{-1}(W_1),\cdots,\pi^{-1}(W_K)\}$ is a partition of $X$ which satisfies 
$$ \pi^{-1}(y_i)\subseteq\pi^{-1}(W_i)\subseteq \overline{\pi^{-1}(V_{y_i})}\subseteq(\pi^{-1}(y_i))^{\frac{\varepsilon}{2}}$$
for every $i\in[K]$. Then for every $i\in[K]$, there eixst $P_i\in\mathbb{N}$ and pairwise different $x_1^i,x_2^i,\cdots,x_{P_i}^i\in\pi^{-1}(y_i)$, such that 
$\{x_j^i\colon j\in[P_i]\}$ is a $\frac{\varepsilon}{2}$-net of $\pi^{-1}(W_i)$. Then one can choose Borel subsets $A_j^i$ 
of $X$ for $i\in[K]$ and $j\in[P_i]$, such that
\begin{enumerate}
	\item[(i)] $diam(A_j^i)<\varepsilon$ for every $i\in[K],j\in[P_i]$;
	\item[(ii)] $x_j^i\in A_j^i$ for every $i\in[K], j\in[P_i]$;
	\item[(iii)] $\{A_j^i\colon j\in[P_i]\} $ is a partition of $\pi^{-1}(W_i)$ for every $i\in[K]$.
\end{enumerate}

For every $i\in[K]$, $j\in[P_i]$, we set $a_{ij}=\mu_{1}(A_j^{i})$ and $b_{ij}=\mu_{2}(A_j^{i})$.
Since $\widetilde{\pi}(\mu_1)=\widetilde{\pi}(\mu_2)$, we have 
$$\sum_{j=1}^{P_i}a_{ij}=\sum_{j=1}^{P_i}\mu_{1}(A_j^{i})=\mu_{1}(\pi^{-1}(W_i))=\mu_{2}(\pi^{-1}(W_i))=\sum_{j=1}^{P_i}\mu_{2}(A_j^{i})=\sum_{j=1}^{P_i}b_{ij}$$
for $i\in[K]$.
Then for any $i\in[K]$ and $j\in[P_i]$, there exist integers $q_{ij}$, $\tilde{q}_{ij}$, $Q_i$ and  
$N\in\mathbb{N}$ large enough satisfying the following conditions
\begin{enumerate}
\item[(i*)] $\frac{q_{ij}}{N}\leq a_{ij}<\frac{q_{ij}+1}{N}$;
\item[(ii*)] $\frac{\tilde{q}_{ij}}{N}\leq b_{ij}<\frac{\tilde{q}_{ij}+1}{N}$;
\item[(iii*)] $\frac{Q_i}{N}\leq\sum_{j=1}^{P_i}a_{ij}=\sum_{j=1}^{P_i}b_{ij}<\frac{Q_i+1}{N}$.
\end{enumerate}
 Now, we choose a $x_0\in X$ arbitrarily and set	 
 \begin{footnotesize}   
 $$\widetilde{\mu}_1=\frac{1}{N}\bigg(\sum_{i=1}^{K}\bigg(\sum_{j=1}^{P_i-1}q_{ij}\delta_{x_{j}^{i}}+\big(Q_i-\sum_{j=1}^{P_i-1}q_{ij}\big)\delta_{x_{P_i}^{i}}\bigg)\bigg)+\frac{N-\sum_{i=1}^{K}Q_{i}}{N}\delta_{x_0},$$
 \end{footnotesize}
 and
 \begin{footnotesize}
$$\widetilde{\mu}_2=\frac{1}{N}\bigg(\sum_{i=1}^{K}\bigg(\sum_{j=1}^{P_i-1}\tilde{q}_{ij}\delta_{x_{j}^{i}}+\big(Q_i-\sum_{j=1}^{P_i-1}\tilde{q}_{ij}\big)\delta_{x_{P_i}^{i}}\bigg)\bigg)+\frac{N-\sum_{i=1}^{K}Q_{i}}{N}\delta_{x_0}.$$
 \end{footnotesize}
 It is clear that $(\widetilde{\mu}_1,\widetilde{\mu}_2)\in R_{\widetilde{\pi}_N}$. Now we shall show that 
$\widetilde{\mu}_i\in\widetilde{W_i}$ for $i\in[2]$. 

In fact, for any $\ell\in[L_1]$ one has
\begin{footnotesize}
\begin{align}\label{4-4-1}
\bigg\vert\int f_{\ell}d\mu_1-\int f_{\ell}d\tilde{\mu}_1\bigg\vert&=
\bigg\vert\sum_{i=1}^{K}\sum_{j=1}^{P_i}\int_{A_{j}^{i}}f_ld\mu_1-\frac{1}{N}\sum_{i=1}^{K}\bigg(\sum_{j=1}^{P_i-1}q_{ij}f_l(x_{j}^{i})\nonumber\\
&+\big(Q_i-\sum_{j=1}^{P_i-1}q_{ij}\big)f_l(x_{P_i}^{i})\bigg)-\frac{N-\sum_{i=1}^{K}Q_{i}}{N}f_l(x_0)\bigg\vert\nonumber\\
&\leq \bigg\vert\sum_{i=1}^{K}\sum_{j=1}^{P_i}\int_{A_{j}^{i}}f_ld\mu_1-\frac{1}{N}\sum_{i=1}^{K}\sum_{j=1}^{P_i}q_{ij}f_l(x_{j}^{i})\bigg\vert\\
&+\bigg\vert\frac{1}{N}\sum_{i=1}^{K}(Q_i-\sum_{j=1}^{P_i}q_{ij})f_l(x_{P_i}^{i})\bigg\vert+\bigg\vert\frac{N-\sum_{i=1}^{K}Q_{i}}{N}f_l(x_0)\bigg\vert\nonumber.
\end{align}
\end{footnotesize}
Since $diam(A_j^{i})<\varepsilon$ for $i\in[K]$ and $j\in[P_i]$, by (\ref{Uniform continuous}) and 
$(\text{i}^*)$, we have
\begin{small}
\begin{align}\label{4-4-2}
&\bigg\vert\sum_{i=1}^{K}\sum_{j=1}^{P_i}\int_{A_{j}^{i}}f_l(x)d\mu_1-\frac{1}{N}\sum_{i=1}^{K}\sum_{j=1}^{P_i}q_{ij}f_l(x_{j}^{i})\bigg\vert\nonumber\\
&\leq\sum_{i=1}^{K}\sum_{j=1}^{P_i}\int_{A_{j}^{i}}\big\vert f_l(x)-f_l(x_{j}^{i})\big\vert d\mu_1+\sum_{i=1}^{K}\sum_{j=1}^{P_i}(a_{ij}-\frac{q_{ij}}{N})\vert f_l(x_{j}^{i})\vert\nonumber\\
&\leq\frac{r}{2}+\frac{\sum_{i=1}^{K}P_i}{N}.
\end{align}
\end{small}
By $(\text{i}^*)$ and $(\text{iii}^*)$ one has
\begin{footnotesize}
\begin{align}\label{4-4-3}
 \sum_{i=1}^{K}\bigg\vert\frac{Q_{i}}{N}-\frac{1}{N}\sum_{j=1}^{P_i}q_{ij}\bigg\vert
&\leq\sum_{i=1}^{K}\bigg\vert\frac{Q_{i}}{N}-\sum_{j=1}^{P_i}a_{ij}\bigg\vert+\sum_{i=1}^{K}\bigg\vert\sum_{j=1}^{P_i}a_{ij}-\frac{1}{N}\sum_{j=1}^{P_i}q_{ij}\bigg\vert\\
&\leq\frac{K}{N}+\frac{\sum_{i=1}^{K}P_i}{N}\nonumber,
\end{align}
\end{footnotesize}
and
\begin{footnotesize}
\begin{align}\label{4-4-4}
	\bigg\vert\frac{N-\sum_{i=1}^{K}Q_{i}}{N}\bigg\vert
 &=\bigg\vert\sum_{i=1}^{K}\sum_{j=1}^{P_i}a_{ij}-\frac{1}{N}\sum_{i=1}^{K}Q_{i}\bigg\vert\nonumber\leq\sum_{i=1}^{K}\bigg\vert\sum_{j=1}^{P_i}a_{ij}-\frac{Q_{i}}{N}\bigg\vert\leq\frac{K}{N}.
\end{align}
\end{footnotesize}
When $N$ is large enough such that 
$\frac{K}{N}+\frac{\sum_{i=1}^{K}P_i}{N}\leq\frac{r}{6},$
by (\ref{4-4-1}), (\ref{4-4-2}) and (\ref{4-4-3}), we have $\widetilde{\mu}_1\in\widetilde{W_1}$. Similarly, we can prove that $\widetilde{\mu}_2\in\widetilde{W_2}$. This ends
our proof.
\end{proof}

\begin{thm}\label{mainthm1}Let $\pi:(X,G)\to(Y,G)$ be a factor map between two $G$-systems with $supp(Y)=Y$ and 
$\widetilde{\pi}:(\mathcal{M}(X),G)\to(\mathcal{M}(Y),G)$ be the induced map of $\pi$. 
Suppose $\pi$ has rel-u.p.e then $\widetilde{\pi}$ also has rel-u.p.e.
\end{thm}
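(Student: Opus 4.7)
The plan is to show that every pair in $R_{\widetilde{\pi}}\setminus\Delta(\mathcal{M}(X))$ is an entropy pair relevant to $\widetilde{\pi}$ by reducing to the finite-support case already handled in Theorem \ref{pi-pi_n}, using the density result of Lemma \ref{R_pi-dense} as the bridge. By Corollary \ref{rel-entropy-pair}, it suffices to fix $(\mu_1,\mu_2)\in R_{\widetilde{\pi}}\setminus\Delta(\mathcal{M}(X))$ and disjoint open neighborhoods $\widetilde{V}_1,\widetilde{V}_2$ of $\mu_1,\mu_2$ in $\mathcal{M}(X)$, and prove that $\{\widetilde{V}_1,\widetilde{V}_2\}$ is independent with respect to $\widetilde{\pi}$.

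First I would apply Lemma \ref{R_pi-dense} to find some $n\in\mathbb{N}$ and a pair $(\widetilde{\mu}_1,\widetilde{\mu}_2)\in R_{\widetilde{\pi}_n}\cap(\widetilde{V}_1\times\widetilde{V}_2)$. Since $\widetilde{V}_1$ and $\widetilde{V}_2$ are disjoint, automatically $\widetilde{\mu}_1\neq\widetilde{\mu}_2$, so $(\widetilde{\mu}_1,\widetilde{\mu}_2)\in R_{\widetilde{\pi}_n}\setminus\Delta(\mathcal{M}_n(X))$. Using the Hausdorff property of $\mathcal{M}_n(X)$, I can choose disjoint open subsets $\widetilde{U}_1,\widetilde{U}_2$ of $\mathcal{M}_n(X)$ with $\widetilde{\mu}_i\in\widetilde{U}_i\subseteq\widetilde{V}_i\cap\mathcal{M}_n(X)$ for $i\in[2]$.

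Next, since $\pi$ has rel-u.p.e and $\supp(Y)=Y$, Theorem \ref{pi-pi_n} tells me that $\widetilde{\pi}_n$ has rel-u.p.e. In particular, $(\widetilde{\mu}_1,\widetilde{\mu}_2)\in E(\mathcal{M}_n(X),G\vert\widetilde{\pi}_n)$, and Corollary \ref{rel-entropy-pair} yields that $\{\widetilde{U}_1,\widetilde{U}_2\}$ is independent with respect to $\widetilde{\pi}_n$.

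Finally I would transfer this independence upwards. Given any finite $I\subseteq G$ that witnesses independence of $\{\widetilde{U}_1,\widetilde{U}_2\}$ for $\widetilde{\pi}_n$, for each $\sigma\in\{1,2\}^I$ there exists $\nu\in\mathcal{M}_n(Y)$ (independent of $\sigma$) and $\mu_\sigma\in\mathcal{M}_n(X)$ with $\widetilde{\pi}_n(\mu_\sigma)=\nu$ and $g\mu_\sigma\in\widetilde{U}_{\sigma(g)}$ for every $g\in I$. Since $\widetilde{\pi}_n$ is the restriction of $\widetilde{\pi}$ and $\widetilde{U}_{\sigma(g)}\subseteq\widetilde{V}_{\sigma(g)}$, the same $\nu\in\mathcal{M}(Y)$ and $\mu_\sigma\in\mathcal{M}(X)$ certify that $I$ is an independence set of $\{\widetilde{V}_1,\widetilde{V}_2\}$ with respect to $\widetilde{\pi}$, with $\mathcal{I}_{\{\widetilde{V}_1,\widetilde{V}_2\}}(F)\ge\mathcal{I}_{\{\widetilde{U}_1,\widetilde{U}_2\}}(F)$ for every $F\in\mathcal{F}(G)$. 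Passing to the limit as $F$ becomes more invariant (via Remark \ref{rem1}) shows $\{\widetilde{V}_1,\widetilde{V}_2\}$ is independent with respect to $\widetilde{\pi}$, completing the proof.

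The only genuinely delicate step is the density reduction, which is already packaged in Lemma \ref{R_pi-dense}; once that is available the argument is essentially a combination of Theorem \ref{pi-pi_n} with the observation that independence on the compact invariant subspace $\mathcal{M}_n(X)$ is inherited by the ambient system $\mathcal{M}(X)$, so I do not expect a serious obstacle in this section.
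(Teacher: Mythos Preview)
Your proposal is correct and follows essentially the same route as the paper: use Lemma \ref{R_pi-dense} to find a pair in $R_{\widetilde{\pi}_n}$ inside $\widetilde{V}_1\times\widetilde{V}_2$, invoke Theorem \ref{pi-pi_n} to get rel-u.p.e.\ of $\widetilde{\pi}_n$, and then note that independence of the restricted neighborhoods in $\mathcal{M}_n(X)$ with respect to $\widetilde{\pi}_n$ passes to independence of $\{\widetilde{V}_1,\widetilde{V}_2\}$ with respect to $\widetilde{\pi}$. The only cosmetic difference is that the paper works directly with $\widetilde{V}_i\cap\mathcal{M}_n(X)$ (already disjoint, since $\widetilde{V}_1,\widetilde{V}_2$ are), so your intermediate $\widetilde{U}_i$ are not needed.
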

\begin{proof}
Assume that $\pi$ has rel-u.p.e. To show $\widetilde{\pi}$ has rel-u.p.e, it suffices to prove that 
$R_{\widetilde{\pi}}\setminus\Delta(\mathcal{M}(X))\subseteq E(\mathcal{M}(X),G\vert\widetilde{\pi})$.
Let $(\mu_1,\mu_2)\in R_{\widetilde{\pi}}\setminus\Delta(\mathcal{M}(X))$, and
$\widetilde{V_1}$, $\widetilde{V_2}$ be two disjoint open subsets of $\mathcal{M}(X)$ with $\mu_i\in\widetilde{V}_i$ for $i\in[2]$. 
By Lemma \ref{R_pi-dense}, there exist $n\in \mathbb{N}$ and 
$(\mu_1',\mu_2')\in R_{\widetilde{\pi}_n}\cap \big(\widetilde{V_1}\times \widetilde{V_2}\big)$.
Notice that, since $\pi$ has rel-u.p.e, by Theorem \ref{pi-pi_n}, $\widetilde{\pi}_n$ has rel-u.p.e. 
Then 
$\{\widetilde{V_1}\cap\mathcal{M}_n(X),\widetilde{V_2}\cap\mathcal{M}_n(X)\}$ is independent with respect to 
$\widetilde{\pi}_n$, which implies $\{\widetilde{V_1},\widetilde{V_2}\}$ is independent with respect to $\widetilde{\pi}$. This ends our proof. 
\end{proof}
	
We note that for any non-empty finite subsets $A$, $H$ of $\mathbb{N}$ with $A\subseteq H$ and 
$S\subseteq \{1,2\}^H$, one can find $S_0\subset S$ with $\vert S_0\vert\geq \frac{\vert S\vert}{
		2^{\vert H\vert-\vert A\vert}}$ such that for every $\sigma_1\neq\sigma_2\in S_0$, there exists $a\in A$ with
	\begin{equation}\label{e8}
		\sigma_1(a)\neq\sigma_2(a).
	\end{equation}
	In fact, if we let $\mathcal{W}=S\vert_A$, then $\vert \mathcal{W}\vert\geq \frac{\vert S\vert}{2^{\vert H\vert-\vert A\vert}}$. For each $w\in\mathcal{W}$, 
	there exists $\sigma_w\in S$ such that $\sigma_w\vert_A=w$. Put $S_0\colon=\{\sigma_w\colon w\in \mathcal{W}\}\subseteq S$.
	Then $\vert S_0\vert=\vert\mathcal{W}\vert\geq \frac{\vert S\vert}{2^{\vert H\vert-\vert A\vert}}$, and for every
	$\sigma_1\neq\sigma_2\in S_0$ one has $\sigma_1\vert_A\neq\sigma_2\vert_A$.
	
	\begin{thm}\label{mainthm2}Let $\pi:(X,G)\to(Y,G)$ be a factor map between two $G$-systems and 
		$\widetilde{\pi}:(\mathcal{M}(X),G)\to(\mathcal{M}(Y),G)$ be the induced map of $\pi$. 
		If $\widetilde{\pi}$ has rel-u.p.e, then so does $\pi$.
	\end{thm}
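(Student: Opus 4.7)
The plan is to establish that every $(x_1, x_2) \in R_\pi \setminus \Delta(X)$ is an entropy pair of $\pi$. Let $V_1, V_2$ be disjoint open neighborhoods of $x_1, x_2$ and pick continuous $f_1, f_2 \colon X \to [0,1]$ with disjoint supports contained in $V_1, V_2$ respectively and $f_i(x_i) = 1$. Set $\widetilde V_i := \{\mu \in \mathcal{M}(X) \colon \int f_i \, d\mu > 1 - \epsilon\}$ for a small $\epsilon > 0$; these are disjoint open neighborhoods of $\delta_{x_1}, \delta_{x_2}$ in $\mathcal{M}(X)$. By Corollary~\ref{rel-entropy-pair} applied to the rel-u.p.e of $\widetilde\pi$, the family $\{\widetilde V_1, \widetilde V_2\}$ is independent with respect to $\widetilde\pi$: there is $c > 0$ such that for every $F \in \mathcal{F}(G)$ one finds $I \subseteq F$ with $\vert I\vert \geq c\vert F\vert$, a common $\nu \in \mathcal{M}(Y)$, and a family $\{\mu_\sigma\}_{\sigma \in \{1,2\}^I} \subseteq \mathcal{M}(X)$ with $\widetilde\pi(\mu_\sigma) = \nu$ and $\int (f_{\sigma(g)} \circ g) \, d\mu_\sigma > 1 - \epsilon$ for every $g \in I$. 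By Corollary~\ref{rel-entropy-pair} again, to conclude $(x_1,x_2) \in E(X,G\vert\pi)$ it suffices to produce, for each $F$, a subset $I' \subseteq F$ of size at least a fixed positive fraction of $\vert F\vert$ and a common point $y^* \in Y$ such that for every $\tau \in \{1,2\}^{I'}$ the set $\pi^{-1}(y^*) \cap \bigcap_{g \in I'} g^{-1} V_{\tau(g)}$ is non-empty.

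The next step is to discretize by applying the construction in the proof of Lemma~\ref{R_pi-dense} simultaneously to all of the $\mu_\sigma$. Since they share the $\widetilde\pi$-image $\nu$, the auxiliary partitions $\{W_1,\ldots,W_K\}$ of $Y$ and $\{A_j^i\}$ of $X$, the reference points $y_1,\ldots,y_K \in Y$, and the list of atoms $\{x_j^i \colon i \in [K], j \in [P_i]\} \cup \{x_0\}$ with $\pi(x_j^i) = y_i$ are common to all $\sigma$, while only the integer weights $q_{ij}^\sigma$ defining $\widetilde\mu_\sigma$ depend on $\sigma$; for $N$ large enough depending on $F$, this yields $\widetilde\mu_\sigma \in \mathcal{M}_N(X)$ sharing a common $\widetilde\pi_N$-image $\widetilde\nu$ and with $\int (f_{\sigma(g)} \circ g) \, d\widetilde\mu_\sigma > 1 - 2\epsilon$ for every $g \in I$. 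Summing this inequality over $g \in I$ and viewing $q_{ij}^\sigma/N$ as weights on the atoms, one finds for each $\sigma$ an atom index $(i_\sigma, j_\sigma)$ such that the point $x^\sigma := x_{j_\sigma}^{i_\sigma}$ satisfies $\sum_{g \in I} f_{\sigma(g)}(gx^\sigma) \geq (1-2\epsilon)\vert I\vert$; since $\supp(f_\ell) \subseteq V_\ell$, at least $(1 - C\epsilon)\vert I\vert$ of the $g \in I$ satisfy $gx^\sigma \in V_{\sigma(g)}$ for an absolute constant $C$, giving an agreement set $A_\sigma \subseteq I$ of that size. A pigeonhole on $i_\sigma \in [K]$ then extracts $\Sigma_1 \subseteq \{1,2\}^I$ with $\vert \Sigma_1\vert \geq 2^{\vert I\vert}/K$ on which all the $x^\sigma$ lie in the common fiber $\pi^{-1}(y^*)$ with $y^* := y_{i^*}$.

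A further pigeonhole over the at most $2^{\eta(\epsilon)\vert I\vert}$ possible values of $A_\sigma$ (where $\eta(\epsilon) \to 0$ as $\epsilon \to 0$) produces a subfamily $\Sigma_2 \subseteq \Sigma_1$ of size at least $2^{(1-\eta(\epsilon))\vert I\vert}/K$ on which $A_\sigma$ equals a common set $A^* \subseteq I$, still with $\vert A^*\vert \geq (1 - C\epsilon)\vert I\vert$. Exactly as in equation~(\ref{e8}), the restriction map $\Sigma_2 \to \{1,2\}^{A^*}$ has fibers of size at most $2^{\vert I \setminus A^* \vert} \leq 2^{C\epsilon\vert I\vert}$, so $\vert \Sigma_2\vert_{A^*}\vert \geq 2^{(1 - \eta(\epsilon) - C\epsilon)\vert I\vert}/K$; for $\epsilon$ small enough and $K$ subexponential in $\vert I\vert$, this exceeds $\lambda^{\vert A^*\vert}$ for some $\lambda > 1$. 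Lemma~\ref{F_S} (Karpovsky--Milman) then supplies $I' \subseteq A^*$ with $\vert I'\vert \geq c' \vert A^*\vert \geq c'(1-C\epsilon)\vert I\vert$ and $\Sigma_2\vert_{I'} = \{1,2\}^{I'}$. For each $\tau \in \{1,2\}^{I'}$, pick any $\sigma_\tau \in \Sigma_2$ with $\sigma_\tau\vert_{I'} = \tau$ and set $x^\tau := x^{\sigma_\tau}$; then $\pi(x^\tau) = y^*$, and since $I' \subseteq A^* = A_{\sigma_\tau}$, we have $gx^\tau \in V_{\sigma_\tau(g)} = V_{\tau(g)}$ for every $g \in I'$. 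Consequently $\{V_1, V_2\}$ is independent with respect to $\pi$ at density at least $cc'(1 - C\epsilon) > 0$, and hence $(x_1, x_2) \in E(X, G\vert\pi)$, so $\pi$ has rel-u.p.e.

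The main obstacle is the discretization step: the partitions $\{W_i\}$ and $\{A_j^i\}$ must be chosen fine enough to uniformly approximate all $2\vert I\vert$ integrals $\int (f_{\sigma(g)} \circ g) \, d\mu_\sigma$ across $\sigma \in \{1,2\}^I$ and $g \in I$, while the column count $K$ has to be kept at most subexponential in $\vert I\vert$ so that the factor $1/K$ lost in the first pigeonhole does not overwhelm the cardinality hypothesis of Karpovsky--Milman; the construction in Lemma~\ref{R_pi-dense} makes this balance possible because $N$ and the $P_i$'s may grow with $\vert F\vert$ while $K$ is governed only by the continuity moduli of the fixed cut-off functions $f_1, f_2$.
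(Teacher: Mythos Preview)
Your proposal has a genuine gap in the discretization step, precisely where you flag ``the main obstacle.'' You claim that in the adaptation of Lemma~\ref{R_pi-dense} the column count $K$ (the size of the partition $\{W_1,\dots,W_K\}$ of $Y$) is governed only by the moduli of continuity of the fixed functions $f_1,f_2$, while $N$ and the $P_i$'s absorb the dependence on $F$. This is not correct. For the atomic measure $\widetilde\mu_\sigma$ to satisfy $\int (f_{\sigma(g)}\circ g)\,d\widetilde\mu_\sigma>1-2\epsilon$ you need the functions $f_{\sigma(g)}\circ g$ to have small oscillation on each atom $A_j^i$; since $g$ is an arbitrary homeomorphism, this forces $\mathrm{diam}(A_j^i)<\varepsilon$ for an $\varepsilon$ depending on the moduli of continuity of \emph{all} the maps $g\in I$, not just of $f_1,f_2$. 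In the Lemma~\ref{R_pi-dense} construction, the representatives $x_j^i$ are required to lie in the single fibre $\pi^{-1}(y_i)$ while forming an $\varepsilon/2$-net of $\pi^{-1}(W_i)$; this forces $\pi^{-1}(W_i)\subseteq(\pi^{-1}(y_i))^{\varepsilon/2}$, so the fineness of the $Y$-partition---hence $K$---is tied to $\varepsilon$ and therefore to $I\subseteq F$. There is no a priori subexponential bound on $K$ in terms of $\vert I\vert$, so your first pigeonhole (dividing $2^{\vert I\vert}$ by $K$) can kill the cardinality hypothesis needed for Lemma~\ref{F_S}.

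The paper circumvents discretization entirely. It takes $\widetilde V_i=\{\mu:\mu(V_i)>1-\epsilon^4\}$, and for each $\sigma$ uses Markov's inequality on $\frac{1}{\vert E\vert}\sum_{g\in E}\mathbf 1_{g^{-1}V_{\sigma(g)}}$ to get a closed set $X_\sigma\subseteq X$ with $\mu_\sigma(X_\sigma)>1-\epsilon^2$ on which most coordinates are correct. The key point is that $\nu(\pi(X_\sigma))\ge\mu_\sigma(X_\sigma)>1-\epsilon^2$ because $\widetilde\pi(\mu_\sigma)=\nu$ for every $\sigma$; averaging $\frac{1}{2^{\vert E\vert}}\sum_\sigma\mathbf 1_{\pi(X_\sigma)}$ against the \emph{common} measure $\nu$ and applying Markov once more yields a single point $y_0\in Y$ lying in $\pi(X_\sigma)$ for a $(1-\epsilon)$-fraction of all $\sigma$. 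This produces, for each such $\sigma$, a witness $x_\sigma\in\pi^{-1}(y_0)$ with a large agreement set $A(\sigma)$, after which the pigeonhole on $A(\sigma)$ and Lemma~\ref{F_S} proceed exactly as in your third paragraph. The double-averaging against $\nu$ is the missing idea: it finds the common fibre without any partition of $Y$, so nothing in the argument depends on the moduli of continuity of the maps $g\in I$.
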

	\begin{proof} 
		Assume that $\widetilde{\pi}$ has rel-u.p.e. To show $\pi$ has rel-u.p.e, we shall show that   
		$R_{\pi}\setminus\Delta(X)\subseteq E(X,G\vert\pi)$. 
		Let $(x_1,x_2)\in R_{\pi}\setminus\Delta(X)$, $V_1,V_2$ be two nonempty disjoint open subsets of $X$ with
		$x_i\in V_i$ for $i\in[2]$. By Corollary \ref{rel-entropy-pair}, it is sufficient to show that
		$(V_1,V_2)$ is independent with respect to $\pi$.

		Take $\epsilon\in(0,\frac{1}{2})$ with 
		\begin{align}\label{4.5-0}
			2^{1-\epsilon^2}\cdot (1-\epsilon^2)^{(1-\epsilon^2)}\cdot(\epsilon^2)^{(\epsilon^2)}>1.
		\end{align}
		We set
			\begin{align}\label{V}
			&\widetilde{V_i}=\{\mu\in\mathcal{M}(X): \mu(V_i)> 1-\epsilon^{4}\}
		\end{align}
		for $i\in[2]$. Clearly, $\delta_{x_i}\in\widetilde{V}_i$. Since
		$(\delta_{x_1},\delta_{x_2})\in R_{\widetilde{\pi}}$ and $\widetilde{\pi}$ has rel-u.p.e, $(\widetilde{V}_1,\widetilde{V}_2)$ is independent
		with respect to $\widetilde{\pi}$. That is, there exists $c>0$, such that for every $F\in\mathcal{F}(G)$
		there exists an independence set $E\subseteq F$ of $(\widetilde{V}_1,\widetilde{V}_2)$ with respect to 
		$\widetilde{\pi}$ with $\vert E\vert>c\vert F\vert$.

		Fix a $F\in\mathcal{F}(G)$ and an independence set $E\subseteq F$ of $(\widetilde{V}_1,\widetilde{V}_2)$ with respect to 
		$\widetilde{\pi}$ with $\vert E\vert>c\vert F\vert$. Then there exists $\nu\in\mathcal{M}(Y)$, such that 
		for every $\sigma\in\{1,2\}^E$
		\begin{align}\label{thm 4.5-1} 
			\boldsymbol{\widetilde{V}}_{\sigma}\colon=\bigg(\bigcap_{g\in E}g^{-1}\widetilde{V}_{\sigma(g)}\bigg)\cap\widetilde{\pi}^{-1}(\nu)\neq\emptyset.
		\end{align}
		For every $\sigma\in\{1,2\}^{E}$, we take $\mu_{\sigma}\in\boldsymbol{\widetilde{V}}_{\sigma}$.
		Then 		
		$\mu_{\sigma}\in g^{-1}\widetilde{V}_{\sigma(g)}$ for every $g\in E$ and $\sigma\in\{1,2\}^{E}$,
		which implies
		$\mu_{\sigma}(g^{-1}V_{\sigma})>1-\epsilon^4$ for every $g\in E$ and $\sigma\in\{1,2\}^E$.
		Thus 
	   $$\int_{X}\frac{1}{\vert E\vert}\sum_{g\in E}1_{g^{-1}V_{\sigma(g)}}(x)d{ \mu_{\sigma}}=\frac{1}{|E|}\sum_{g\in E}\mu_{\sigma}(g^{-1}V_{\sigma(g)})> 1-\epsilon^{4},$$
	   and $\mu_{\sigma}(\widetilde{X}_{\sigma})> 1-\epsilon^{2}$ for every $\sigma\in\{1,2\}^{E}$, where 
	   \begin{align}\label{thm 4.5-17}
		\widetilde{X}_{\sigma}=\{x\in X: \frac{1}{|E|}\sum_{g\in E}1_{g^{-1}V_{\sigma(g)}}(x)> 1-\epsilon^{2}\}.
	   \end{align}
       By the inner regular of measure, we can find a closed subset 
	   \begin{align}\label{thm 4.5-18}
		X_{\sigma}\subseteq\widetilde{X}_{\sigma} \quad \text{with} \quad
	   \mu_{\sigma}(X_{\sigma})> 1-\epsilon^{2}
	   \end{align}
	   for every $\sigma\in\{1,2\}^{E}$. 
	   Since $\pi$ is continuous, for every $\sigma\in\{1,2\}^E$ we have 
	   \begin{align}\label{thm 4.5-20}
		Y_{\sigma}\colon=\pi(X_{\sigma})
	   \end{align}
	    is a closed subset of $Y$ and
		$$\nu(Y_{\sigma})=\widetilde{\pi}\mu_{\sigma}(Y_{\sigma})\geq\mu_{\sigma}(X_{\sigma})> 1-\epsilon^{2}.$$ 
		Then
		$$\int_{Y}\frac{1}{2^{|E|}}\sum_{\sigma\in \{1,2\}^{E}}1_{Y_{\sigma}}(y)d \nu> 1-\epsilon^{2}.$$ 
		Put 
		\begin{align}\label{thm 4.5-24}
			Y^{'}:=\{y\in Y: \frac{1}{2^{|E|}}\sum_{\sigma\in \{1,2\}^{E}}1_{Y_{\sigma}}(y)> 1-\epsilon\},
		\end{align}
		then $\nu(Y^{'})> 1-\epsilon>\frac{1}{2}$.

        Now, we fix a point $y_0\in  Y^{'}$ and set 
		\begin{align}\label{thm 4.5-26}
			\mathcal{E}\colon=\bigg\{\sigma\in\{1,2\}^{E}: y_0\in Y_{\sigma}\bigg\}.
		\end{align}
		Then $|\mathcal{E}|>(1-\epsilon)\cdot2^{\vert E\vert}$ by (\ref{thm 4.5-24}). 
		For any $\sigma\in\mathcal{E}$, by (\ref{thm 4.5-26}),(\ref{thm 4.5-20}),(\ref{thm 4.5-18}) and (\ref{thm 4.5-17}) there is $x_{\sigma}\in X_{\sigma}$ with 
		$$\frac{1}{|E|}\sum_{g\in E}1_{g^{-1}V_{\sigma(g)}}(x_{\sigma})>1-\epsilon^2$$ 
		such that $\pi(x_{\sigma})=y_0$. For every $\sigma\in\mathcal{E}$ we set
		$$A(\sigma)=\left\{g\in E: x_{\sigma}\in g^{-1}V_{\sigma(g)}\right\},$$
		then $\vert A(\sigma)\vert>(1-\epsilon^2)\vert E\vert$. Now we define
		$$\Omega\colon=\bigg\{H\subseteq E\colon\vert H\vert=\big\lfloor 
		(1-\epsilon^2)\cdot\vert E\vert\big\rfloor\bigg\},$$
		and 
		$$\mathcal{Q} (H)=\bigg\{\sigma\in\mathcal{E}:H\subseteq\{g\in E:gx_{\sigma}\in V_{\sigma(g)}\}\bigg\}$$
		for every $H\in\Omega$. Then $\vert\Omega\vert=\tbinom{\vert 
			E\vert}{\lfloor (1-\epsilon^2)\cdot\vert E\vert\rfloor}$ and $\bigcup_{H\in\Omega}\mathcal{Q} (H)=\mathcal{E}$.
	    Thus there exists $H_0\in\Omega$ such that $|\mathcal{Q} (H_0)|\geq\frac{|\mathcal{E}|}{|\Omega|}\geq\frac{(1-\epsilon)2^{|E|}}{\tbinom{\vert 
				E\vert}{\lfloor (1-\epsilon^2)\cdot\vert E\vert\rfloor}}.$ 
		By (\ref{e8}), we can choose $S\subseteq\mathcal{Q}(H_0)$ such that
			\begin{align}\label{4.5-5} 
			|S|\geq\frac{(1-\epsilon)2^{|E|}}{2^{|E|-\lfloor (1-\epsilon^2)\cdot\vert E\vert\rfloor} \cdot\tbinom{|E|}{\lfloor (1-\epsilon^2)\cdot\vert E\vert\rfloor}
			}
			\end{align}
			and for any $\sigma'\neq\sigma''\in S$, there exists $g\in H_0$ satisfies $\sigma'(g)\neq\sigma''(g)$. That is $\vert S\vert_{H_0}\vert=\vert S\vert$.
			Let $t=1-\epsilon^2$ and $\lambda=\log_2\big(2^t\cdot t^t\cdot (1-t)^{(1-t)}\big)>0$. Then by Stirling’s formula, when $\vert E\vert$ is large enough one has
			\begin{align*}
				&\vert S\vert_{H_0}\vert
				\approx 2^{\lfloor t\vert E\vert\rfloor-1}\cdot \sqrt{2\pi t(1-t)\vert E\vert}\cdot t^{t\vert E\vert}\cdot(1-t)^{(1-t)\vert E\vert}\\
				&\geq 2^{t\vert E\vert-2}\cdot \sqrt{2\pi t(1-t)\vert E\vert}\cdot t^{t\vert E\vert}\cdot (1-t)^{(1-t)\vert E\vert}\\
				&\geq \bigg(2^t\cdot t^t\cdot(1-t)^{(1-t)}\bigg)^{\vert E\vert}
				>2^{\lambda\vert E\vert}.
				\end{align*}
			By Lemma \ref{F_S}, there exists a subset $H_1\subseteq H_0$ with $\vert H_1\vert>d\vert H_0\vert$
			such that $S\vert_{H_1}=\{1,2\}^{H_1}$, where $d$ is a positive constant independent with $ E$ when 
			$\vert E\vert$ is lager enough. By Remark \ref{rem1}, $(\mathcal{V}_1,\mathcal{V}_2)$ is independent 
			with respect to $\pi$. This ends our proof.
		\end{proof}
\section{$\pi$ is open if and only if $\widetilde{\pi}$ is open}\label{section 5}
In this section, we will prove Theorem \ref{prop-1.4}. In fact, we have the following result.
\begin{thm}Let $\pi\colon X\to Y$ be a surjective continuous map between two compact metrizable spaces, 
	$\widetilde{\pi}\colon \mathcal{M}(X)\to\mathcal{M}(Y)$ be the induced map of $\pi$, and
	$\widetilde{\pi}_n\colon \mathcal{M}_n(X)\to\mathcal{M}_n(Y)$ be the restriction of $\widetilde{\pi}$
	on $\mathcal{M}_n(X)$. Then the following are equivalent
	\begin{enumerate}
		\item[(1)] $\pi$ is open.
		\item[(2)] $\widetilde{\pi}$ is open.
		\item[(3)] $\widetilde{\pi}_n$ is open for each $n\in\mathbb{N}$.
		\item[(4)] $\widetilde{\pi}_n$ is open for some $n\in\mathbb{N}$.
	\end{enumerate}
\end{thm}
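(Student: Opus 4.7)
The plan is to establish the cycle $(1)\Rightarrow(3)\Rightarrow(4)\Rightarrow(1)$ together with the equivalence $(1)\Leftrightarrow(2)$. The step $(3)\Rightarrow(4)$ is immediate.

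For the two easy directions $(2)\Rightarrow(1)$ and $(4)\Rightarrow(1)$, I would give a uniform treatment. For an open $U\subseteq X$ and $x\in U$, the sets $\widetilde{U}=\{\mu\in\mathcal{M}(X):\mu(U)>1/2\}$ and $\widetilde{U}_n=\{\mu\in\mathcal{M}_n(X):\mu(U)>1-\frac{1}{2n}\}$ are open neighbourhoods of $\delta_x$ with the rigidity properties that (i) every $\mu\in\widetilde{U}_n$ has all $n$ atoms inside $U$ (by a counting argument on the discrete support), and (ii) every $\mu\in\widetilde{U}$ with $\widetilde{\pi}(\mu)=\delta_y$ is supported on $\pi^{-1}(y)$. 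Openness of $\widetilde{\pi}$ (resp.\ $\widetilde{\pi}_n$) then provides an open neighbourhood of $\delta_{\pi(x)}$ inside $\widetilde{\pi}(\widetilde{U})$ (resp.\ $\widetilde{\pi}_n(\widetilde{U}_n)$). Expressing it via Proposition \ref{basis of M(X)} as $\mathbb{W}(W_1,\ldots,W_k;\eta_1,\ldots,\eta_k)$ (intersected with $\mathcal{M}_n(Y)$ if needed), the intersection $V=\bigcap_j W_j$ is open and contains $\pi(x)$. For any $y\in V$, $\delta_y$ belongs to this basic open set, so it has a preimage $\mu$ in $\widetilde{U}$ or $\widetilde{U}_n$, and (i) or (ii) forces $y\in\pi(U)$. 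Hence $\pi(U)$ is open.

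For $(1)\Rightarrow(3)$, I would exploit that $\pi^{(n)}\colon X^n\to Y^n$ is open (a finite product of open maps), together with the commutative diagram $\widetilde{\pi}_n\circ\Phi_n^X=\Phi_n^Y\circ\pi^{(n)}$, where $\Phi_n^Z\colon Z^n\to\mathcal{M}_n(Z)$ is the continuous surjection $(z_1,\ldots,z_n)\mapsto\frac{1}{n}\sum_i\delta_{z_i}$. Given an open $\mathcal{O}\subseteq\mathcal{M}_n(X)$ and $\nu_0=\widetilde{\pi}_n(\mu_0)$ with $\mu_0=\Phi_n^X(x_1,\ldots,x_n)$, continuity produces open $U_i\ni x_i$ with $\Phi_n^X(\prod_i U_i)\subseteq\mathcal{O}$, and openness of $\pi$ yields open sets $V_i=\pi(U_i)$. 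The delicate point is finding a basic open neighbourhood of $\nu_0$ inside $\Phi_n^Y(\prod_i V_i)\subseteq\widetilde{\pi}_n(\mathcal{O})$: I group the $\pi(x_i)$'s by equality into distinct values $z_1,\ldots,z_k$ with multiplicities $m_j$, pick pairwise disjoint open $W_j\ni z_j$ with $W_j\subseteq\bigcap_{i\colon\pi(x_i)=z_j}V_i$, and use $\mathbb{W}(W_1,\ldots,W_k;\frac{m_1}{n}-\frac{1}{2n},\ldots,\frac{m_k}{n}-\frac{1}{2n})\cap\mathcal{M}_n(Y)$. A pigeonhole argument forces every $n$-atomic measure in this set to have exactly $m_j$ atoms in $W_j$, which then lift through $\pi$ into the corresponding $U_i$'s.

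The main obstacle will be $(1)\Rightarrow(2)$, since targets $\nu\in\mathcal{M}(Y)$ may be diffuse and the clean combinatorial argument above does not apply. My plan is first to upgrade openness of $\pi$ to \emph{uniform openness}: for every $\epsilon>0$ there is $\delta>0$ such that $d_Y(y_1,y_2)<\delta$ implies every $x_1\in\pi^{-1}(y_1)$ admits some $x_2\in\pi^{-1}(y_2)\cap B_X(x_1,\epsilon)$; this follows from a standard compactness/contradiction argument. With uniform openness in hand, I would verify the Prohorov-metric characterization of openness at $\mu_0$: for every $\epsilon>0$ there is $\delta>0$ such that any $\nu$ with $d_P(\nu,\widetilde{\pi}(\mu_0))<\delta$ admits a lift $\mu$ with $\widetilde{\pi}(\mu)=\nu$ and $d_P(\mu,\mu_0)<\epsilon$. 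The lift combines Strassen's theorem (producing a coupling $\lambda$ of $(\widetilde{\pi}(\mu_0),\nu)$ concentrated near the diagonal of $Y\times Y$), the disintegration $\mu_0=\int\mu_0^{y_1}\,d\widetilde{\pi}(\mu_0)(y_1)$, and the Kuratowski--Ryll-Nardzewski measurable selection applied to the closed-valued multimap $(y_1,y_2,x)\mapsto\pi^{-1}(y_2)\cap\overline{B}_X(x,\epsilon)$, yielding a Borel transport map $\phi$. Setting $\mu=\int_{Y\times Y}\int_X\delta_{\phi(y_1,y_2,x)}\,d\mu_0^{y_1}(x)\,d\lambda(y_1,y_2)$, Fubini gives $\widetilde{\pi}(\mu)=\nu$, and the natural coupling $(x,\phi(y_1,y_2,x))$ of $(\mu_0,\mu)$ is concentrated where $d_X<\epsilon$, giving $d_P(\mu_0,\mu)\leq\epsilon$. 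The key technical care is in handling measurability of $\phi$ and the nonemptiness of the multimap, for which uniform openness is exactly what is needed.
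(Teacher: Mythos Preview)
Your proposal is correct. The implications $(2)\Rightarrow(1)$, $(4)\Rightarrow(1)$, and $(1)\Rightarrow(3)$ follow essentially the same path as the paper (basic open sets of the form $\mathbb{W}(\cdots)$, the surjection $\Phi_n$, and a pigeonhole count forcing exactly $m_j$ atoms into each $W_j$); your uniform treatment of $(2)\Rightarrow(1)$ and $(4)\Rightarrow(1)$ is in fact a bit cleaner than the paper's two separate arguments.

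The genuine divergence is in $(1)\Rightarrow(2)$. The paper avoids all abstract machinery: it fixes a basic neighbourhood $\mathbb{W}(U_1,\ldots,U_k;\eta_1,\ldots,\eta_k)$ of a lift $\mu$, carves $Y$ into pieces $V'_\sigma$ indexed by $\sigma\in\{0,1\}^{[k]}$ recording which images $\pi(U_i)$ a point lies in, approximates a nearby target $\tau$ by atomic measures $\tau_n$, and then distributes the atoms of each $\tau_n$ among the $U_i$'s by hand in the proportions $\mu(U_i\cap\pi^{-1}(V'_\sigma))$; a weak$^*$ limit of the resulting $\mu_n$ lifts $\tau$. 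This is entirely elementary but combinatorially delicate (the bookkeeping with the sets $C'_\sigma$, $S^n_\sigma$, $S^n_{\sigma,i}$ and the three ``Facts'' runs to several pages). Your route---uniform openness of $\pi$ from compactness, a Strassen coupling of $(\widetilde{\pi}(\mu_0),\nu)$, disintegration of $\mu_0$ over fibres, and a Kuratowski--Ryll-Nardzewski selector into $\pi^{-1}(y_2)\cap\overline{B}_X(x,\epsilon)$---is much shorter and conceptually transparent, at the price of invoking three nontrivial off-the-shelf theorems. Your approach yields a direct lift of $\nu$ with a quantitative Prohorov bound uniform in $\mu_0$; the paper's approach is self-contained. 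One detail to make explicit in your write-up: the Strassen coupling only guarantees $\lambda\{d_Y>\delta\}\le\delta$, so on that residual bad set your selector multimap may be empty and you must fall back on an arbitrary Borel section of $\pi$ there; this contributes at most $\delta$ to the final $d_P(\mu_0,\mu)$ estimate, which is harmless once you take $\delta\le\epsilon$.
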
	
\begin{proof} $(3)\Rightarrow (4)$ is trivial. We will show $(2)\Rightarrow(1)$, $(4)\Rightarrow (1)$, $(1)\Rightarrow (3)$ and $(1)\Rightarrow(2)$.
Fix compatible metrics $\rho_X$ for $X$ and $\rho_Y$ for $Y$ respectively.

	(2)$\Rightarrow$ (1) Suppose that $\widetilde{\pi}$ is open. 
	For every nonempty open subset $U$ of $X$, we shall show that $\pi(U)$ is an open subset of $Y$. That is, for 
	every $y\in\pi(U)$ there exists $r>0$ such that $B_{\rho_Y}(y,r)\subseteq \pi(U)$.
	
	Now fix $y_0\in \pi(U)$. Since $U$ is open, there exist $x_0\in U$ and $\delta>0$ with $\pi(x_0)=y_0$ and 
	$\overline{B_{\rho_X}(x_0,\delta)}\subseteq U$. Then by Urysohn Lemma, there exists a countinuous map $f\colon X\to [0,1]$ 
	with $f(z)=1$ when $z\in B_{\rho_X}(x_0,\frac{\delta}{2})$ and $f(z)=0$ when $z\in X\backslash B_{\rho_X}(x_0,\delta)$.
	We set 
	$$\widetilde{U}\colon=\bigg\{\mu\in\mathcal{M}(X)\colon \int fd\mu>\frac{2}{3}\bigg\}.$$
	Clearly, $\widetilde{U}$ is an open subset of $\mathcal{M}(X)$ and $\delta_{x_0}\in\widetilde{U}$. 
	
	Since $\widetilde{\pi}$ is open, $\widetilde{\pi}(\widetilde{U})$ is an open subset of $\mathcal{M}(Y)$. Note that $\delta_{y_0}=\widetilde{\pi}(\delta_{x_0})\in\widetilde{\pi}(\widetilde{U})$.
	Thus there exists $r>0$ such that 
	$$\big\{\delta_{y}\colon \ y\in B_{\rho_Y}(y_0,r)\big\}\subset \widetilde{\pi}(\widetilde{U}).$$
	Then for every $y'\in B_{\rho_{Y}}(y_0,r)$, there exists $\mu_{y'}\in\widetilde{U}$ such that $\widetilde{\pi}(\mu_{y'})=\delta_{y'}$.
	On the one hand, since $\mu_{y'}(\{\pi^{-1}(y')\})=\delta_{y'}(\{y'\})=1$, we have 
	\begin{align}\label{thm4.1-1}
		supp(\mu_{y'})\subseteq\pi^{-1}(\{y'\}).
	\end{align}
	On the other hand, since $\mu_{y'}\in\widetilde{U}$, we have $\int fd\mu_{y'}>\frac{2}{3}$. Thus 
	$$\emptyset\neq supp(\mu_{y'})\cap B_{\rho_X}(x_0,\delta)\subseteq supp(\mu_{y'})\cap U.$$
	By (\ref{thm4.1-1}), we have $U\cap\pi^{-1}(\{y'\})\neq\emptyset$. That is $y'\in \pi(U)$. Then by the arbitrariness of $y'\in B_{\rho_{Y}}(y_0,r)$, one has $B_{\rho_{Y}}(y_0,r)\subseteq \pi(U)$. Thus 
	$\pi(U)$ is an open subset of $Y$ and $\pi$ is open.

	(4)$\Rightarrow $(1). We assume that there exists $n\in\mathbb{N}$ such that $\widetilde{\pi}_n$ is open.
		Let $U$ be an open subset of $X$. We shall show that for every $y\in\pi(U)$ there exists $r>0$ such that 
		$y\in B_{\rho_Y}(y,r)\subseteq\pi(U)$.
		
		Let $y\in\pi(U)$, there exists $x\in U$ with $\pi(x)=y$. We set 
		$$\widetilde{U}=\mathcal{M}_n(X)\cap\{\mu\in\mathcal{M}(X)\colon \mu(U)>0\}.$$
		$\widetilde{U}$ is an open subset of $\mathcal{M}_n(X)$ contains $\delta_x$. Since $\widetilde{\pi}_n$
		is open, $\widetilde{\pi}_n(\widetilde{U})$ is open which contains $\delta_y$. Then there exists $r>0$
		such that $\{\delta_{z}\colon\ \rho_{Y}(z,y)<r\}\subseteq \widetilde{\pi}_n(\widetilde{U})$. 
		Hence, for every $z\in B_{\rho_Y}(y,r)$, there exist $x_1,x_2,\cdots,x_n\in X$ such that
		$$\mu=\frac{1}{n}\sum_{i=1}^n\delta_{x_i}\in\widetilde{U}\ \ \text{and}\  \widetilde{\pi}_n(\mu)=\delta_z.$$ 
		Then one has $\pi(x_i)=z$ for every 
		$i\in[n]$. Since $\mu\in\widetilde{U}$, there exists $i_0\in[n]$ with 
		$x_{i_0}\in U$. That is, $z=\pi(x_{i_0})\in\pi(U)$. Hence, 
		$B_{\rho_Y}(y,r)\subseteq\pi(U)$. This implies $\pi$ is open.

		(1)$\Rightarrow $(3). Now we assume that $\pi$ is open. Let $n\in\mathbb{N}$ and 
		$\widetilde{U}$ be an open subset of $\mathcal{M}_n(X)$. We shall show that for every 
		$\nu\in\widetilde{\pi}_n(\widetilde{U})\subseteq \mathcal{M}_n(Y)$, there exists an open neighbourhood
		of $\nu$ in $\mathcal{M}_n(Y)$ contained in $\widetilde{\pi}_n(\widetilde{U})$.
		
		For any $\nu\in\widetilde{\pi}_n(\widetilde{U})\subseteq \mathcal{M}_n(Y)$, there exist positive integers $h$, $k_1,k_2,\cdots,k_h$ with
		$\sum_{i\in[h]}k_i=n$ and 
		and pairwise distinct $y_1,y_2,\cdots,y_h\in Y$ such that
		$$\nu=\frac{1}{n}(k_1\delta_{y_1}+k_2\delta_{y_2}+\cdots+k_h\delta_{y_h}).$$
		Since $\nu\in\widetilde{\pi}_n(\widetilde{U})$, there exists $\mu\in \widetilde{U}\subseteq\mathcal{M}_n(X)$ such that
		$\widetilde{\pi}_n(\mu)=\frac{1}{n}\sum_{i=1}^{h}k_i\delta_{y_i}$. Then for every $i\in[h]$ there exist integers $\ell_i$,
		$m_{i,j}$, and points $x_{i,j}\in X$ for $j\in[\ell_i]$ satisfying
		\begin{itemize}
			\item[(a)] $m_{i,1}+m_{i,2}+\cdots+m_{i,\ell_i}=k_i$ for every $i\in[h]$,
			\item[(b)] $x_{i,1},x_{i,2},\cdots,x_{i,\ell_i}$ are pairwise distinct and $\pi(x_{i,j})=y_i$ for every $i\in[h]$ and $j\in[\ell_i]$,
			\item[(c)] $\mu=\frac{1}{n}\sum\limits_{i\in[h]}\sum\limits_{j\in[\ell_i]}m_{i,j}\delta_{x_{i,j}}$.
		\end{itemize}
		Since $\widetilde{U}$ is an open neighbourhood of $\mu$, there exists $r_0>0$ such that if 
		$z_{i,j}^1,z_{i,j}^2,\cdots,$ $z_{i,j}^{m_{i,j}}\in B_{\rho_X}(x_{i,j},r_0)$
		for every $i\in[h]$, $j\in[\ell_i]$, then 
		\begin{align}\label{Ap-8}
			\frac{1}{n}\sum\limits_{i\in[h]}\sum\limits_{j\in[\ell_i]}\big(\sum_{t\in[m_{i,j}]}\delta_{z_{i,j}^t}\big)\in\widetilde{U}.
		\end{align}

		Note that, $y_1,y_2,\cdots,y_{h}$ are pairwise distinct, then there exists $\delta>0$ such that
		$\{B_{\rho_Y}(y_i,\delta)\}_{i\in [h]}$ are pairwise disjoint.
		By (b) and the continuity of $\pi$, there exists $r\in(0,r_0)$ such that
		\begin{align}\label{Ap-2}
			\pi(B_{\rho_X}(x_{i,j},r))\subseteq B_{\rho_X}(y_i,\delta)\ \ \  \text{and}\ \ \ B_{\rho_X}(x_{i,t},r)\cap B_{\rho_X}(x_{i,j},r)=\emptyset
		\end{align}
		for every $i\in[h]$, and different $j,t\in[\ell_i]$.
		
		Since $\pi$ is open,
		$\bigcap_{j=1}^{\ell_i}\pi\big(B_{\rho_X}(x_{i,j},r)\big)$ for every $i\in[h]$ is open. We set 
		$$\widetilde{V}\colon=\bigg\{\tau\in\mathcal{M}_n(Y)\colon\tau\bigg(\bigcap_{j=1}^{\ell_i}
		\pi\big(B_{\rho_X}(x_{i,j},r)\big)\bigg)>\frac{k_i}{n}-\frac{1}{2n},\ \ i\in[h]\bigg\},$$
        It is an open subset of $\mathcal{M}_n(Y)$. Moreover, for every $i_0\in[h]$,
		\begin{align*}
			\nu\bigg(\bigcap_{j=1}^{\ell_{i_0}}\pi\big(B_{\rho_X}(x_{i_0,j},r)\big)\bigg)
			&=\frac{1}{n}\sum_{i\in[h]}k_i\delta_{y_i}\bigg(\bigcap_{j=1}^{\ell_{i_0}}\pi\big(B_{\rho_X}(x_{i_0,j},r)\big)\bigg)\nonumber\\
			&=\frac{1}{n}k_{i_0}\delta_{y_{i_0}}\bigg(\bigcap_{j=1}^{\ell_{i_0}}\pi\big(B_{\rho_X}(x_{i_0,j},r)\big)\bigg)\nonumber\\
			&=\frac{1}{n}k_{i_0}>\frac{k_{i_0}}{n}-\frac{1}{2n}.
		\end{align*}
		Thus, $\nu\in\widetilde{V}$. Next, we shall show that $\widetilde{V}\subseteq\widetilde{\pi}_n(\widetilde{U})$.
		
		Now fix any $\tau\in\widetilde{V}\subseteq\mathcal{M}_n(Y)$. We have $\tau=\frac{1}{n}\sum\limits_{s=1}^{n}\delta_{u_{s}}$
		for some $u_{s}\in Y$. For every $i\in[h]$, we set 
		\begin{align}\label{Ap-6}
			L(i)\colon=\bigg\{s\in[n]\colon u_{s}\in \bigcap_{j=1}^{\ell_i}
			\pi\big(B_{\rho_X}(x_{i,j},r)\big)\bigg\}.
		\end{align}
		By $\tau\in\widetilde{V}$, one has
		\begin{align}\label{Ap-3}
			\vert L(i)\vert=n\cdot\tau\bigg(\bigcap_{j=1}^{\ell_i}
			\pi\big(B_{\rho_X}(x_{i,j},r)\big)\bigg)>k_i-\frac{1}{2},
		\end{align}
		for every $i\in[h]$. Since $\vert L(i)\vert\in\mathbb{N}$, by (\ref{Ap-3}), $\vert L(i)\vert\geq k_i$. 
		We note that, $L(i)$, $i\in[h]$ are pairwise disjoint since $\bigcap_{j=1}^{\ell_i}\pi\big(B_{\rho_X}(x_{i,j},r)\big)$, $i\in[h]$
		are pairwise disjoint. Moreover, by $\sum_{i\in[h]}k_i=n$, one has $\vert L(i)\vert=k_i$ for every $i\in[h]$. Hence,
		\begin{align}\label{Ap-9}
			\bigcup_{i\in[h]}L(i)=\bigsqcup_{i\in[h]} L(i)=[n]\ \ \text{and}\ \  \tau=\frac{1}{n}\sum_{i\in[h]}\big(\sum_{s\in L(i)}\delta_{u_s}\big).
		\end{align}
	
		For every $i\in[h]$, since $\vert L(i)\vert=k_i\overset{(a)}{=}\sum_{j\in[\ell_i]}m_{i,j}$, we can 
		rewrite $L(i)=\{s_1,s_2,\cdots,s_{k_i}\}$. For every $i\in[h]$ and $j\in[\ell_i]$, we denote 
		$R_i(j)=\sum_{t=1}^{j}m_{i,t}$ and $R_i(0)=0$.
		Then $R_i(\ell_i)=k_i$. By (\ref{Ap-6}), for every $j\in[\ell_i]$ and integer $q$ with
		$R_i(j-1)+1\leq q\leq R_i(j)$, there exists $x_{i,q}'\in B(x_{i,j},r)$ such that
		$\pi(x_{i,q}')=u_{s_q}$. Then by (\ref{Ap-8}) one has 
		$$\mu'\colon=\frac{1}{n}\sum_{i\in[h]}\sum_{j\in[\ell_i]}\sum_{q=R_i(j-1)+1}^{R(j)}
		\delta_{x_{i,q}'}\in\widetilde{U}$$
		and
		\begin{align*}
			\widetilde{\pi}_n(\mu')&=\frac{1}{n}\sum_{i\in[h]}\sum_{j\in[\ell_i]}\sum_{q=R_i(j-1)+1}^{R_i(j)}
			\delta_{u_{s_q}}=\frac{1}{n}\sum_{i\in[h]}\sum_{q\in[k_i]}\delta_{u_{s_q}}\\
			&=\frac{1}{n}\sum_{i\in[h]}\sum_{s\in L(i)}\delta_{u_s}\overset{(\ref{Ap-9})}{=}\tau.
		\end{align*}
		This implies $\widetilde{V}\subset\widetilde{\pi}_n(\widetilde{U})$. Hence, $\widetilde{\pi}_n(\widetilde{U})$ is 
		an open subset of $\mathcal{M}_n(Y)$ and $\widetilde{\pi}_n$ is open. 

	$(1)\Rightarrow (2)$ Now we assume that $\pi$ is open. Let 
	$\widetilde{U}$ be an open subset of $\mathcal{M}(X)$, we shall show $\widetilde{\pi}(\widetilde{U})$ is open in 
	$\mathcal{M}(Y)$. 
	
	For every $\nu\in\widetilde{\pi}(\widetilde{U})$, 
	there exists $\mu\in\widetilde{U}$ such that $\nu=\widetilde{\pi}(\mu)$. Next we shall show that there exists 
	$\delta>0$ small enough such that if we set
	$$\widetilde{V}:=\{\tau\in\mathcal{M}(Y): d_{P}(\nu,\tau)<\delta\},$$ 
	where
	$$\small{d_P(\tau,\nu):=inf\{\delta>0:\tau(A)\leq\nu(A^{\delta})+\delta\ \text{and}\ \nu(A)\leq\tau(A^{\delta})+\delta \ \text{for\ all}\ A\in\mathcal{B}_Y\}},$$
	then $\widetilde{V}$ is an open neighbourhood of $\nu$ contianed in $\widetilde{\pi}(\widetilde{U})$.
	
	Since $\widetilde{U}$ is open, by 
	Proposition \ref{basis of M(X)}, there exist $k\in\mathbb{N}$ and an open set of the form 
	$\mathbb{W}(U_1,U_2,\cdots,U_k;$ $\eta_1,\eta_2,\cdots,\eta_k)$ of $\mathcal{M}(X)$, where 
	$U_1,U_2,\cdots,U_k$ are disjoint non-empty open subsets of $X$ and $\eta_1,\eta_2,
	\cdots,\eta_k$ are positive real numbers with $\eta_1+\eta_2+
	\cdots+\eta_k<1$, such that
	$$\mu\in\mathbb{W}(U_1,U_2,\cdots,U_k;\eta_1,\eta_2,\cdots,\eta_k)\subset\overline{\mathbb{W}(U_1,U_2,
		\cdots,U_k;\eta_1,\eta_2,\cdots,\eta_k)}\subset\widetilde{U}.$$
	
	For any $t_1,t_2\in\{0,1\}^{[k]}$, we denote $t_1>t_2$ if $t_1\neq t_2$ and $t_1(i)\geq t_2(i)$ for every 
	$i\in[k]$.
	For every $\sigma\in\{0,1\}^{[k]}$, we set
	$$V_{\sigma}:=\bigcap_{\substack{i\in[k]\\ \sigma(i)=1}}\pi(U_i), \ \ 
	V'_{\sigma}\colon=V_{\sigma}\backslash\bigcup\limits_{\substack{\alpha\in\{0,1\}^{[k]}\\\alpha>\sigma}}V_{\alpha},$$
	and
	\begin{align}\label{mathcal-E}
		\mathcal{E}\colon=\big\{t\in\{0,1\}^{[k]}\colon\ \nu(V_t')>0\big\}.
	\end{align}
	Recall that for any subset $A$ of $Y$ and $a>0$, we denote $A^{a}=\{y\in Y\ \colon\rho_Y(y,A)<a\}$, where $\rho_Y$ is
	the compatible metric on $Y$. For every $i\in[k]$, since $\pi$ is open and $U_i$ is open in $X$, then 
	$\pi(U_i)$ is an open subset of $Y$. Then by 
	inner regularity, there exist $\varepsilon>0$ small enough, $\delta\in(0,\varepsilon)$ and compact subsets 
	$C_i$ of $Y$ for $i\in[k]$ such that
	\begin{enumerate}
		\item[(c1)] $\mu(U_i)>\eta_i+6^{k}\varepsilon$ for every $i\in[k]$,
		\item[(c2)] $\nu(V_{\sigma}')>5k\varepsilon$, for every $\sigma\in\mathcal{E}$,
		\item[(c3)] $C_i\subseteq C_i^{\delta}\subseteq C_i^{2\delta}\subseteq\pi(U_i)$ for every $i\in[k]$,
		\item[(c4)] $\nu(C_i)>\nu(\pi(U_i))-\varepsilon$ for $i\in[k]$. 
	\end{enumerate}
	
	Now we set
	$$\widetilde{V}:=\{\tau\in\mathcal{M}(Y): d_{P}(\nu,\tau)<\delta\}.$$ 
	Clearly, $\widetilde{V}$ is an open subset of $\mathcal{M}(Y)$ containing $\nu$. Now it is sufficient to prove
	that $\widetilde{V}\subset\widetilde{\pi}(\widetilde{U})$.

	For every $\sigma\in\{0,1\}^{[k]}$, we set 
	$$C_{\sigma}(\delta)\colon=\bigcap\limits_{\substack{i\in[k]\\\sigma(i)=1}}C_i^{\delta}\ \ \text{and}
	\ \ C_{\sigma}'\colon=
	C_{\sigma}(\delta)\backslash\bigcup_{\substack{\alpha\in\{0,1\}^{[k]}\\\alpha>\sigma}}C_{\alpha}(\delta).$$
	Then for every $\sigma\in\{0,1\}^{[k]}$, by (c3) and (c4) we have
	\begin{align}\label{pro4.1-1}
		\nu(V_{\sigma})&=\nu\bigg(\bigcap_{\substack{i\in[k]\\\sigma(i)=1}}\pi(U_i)\bigg)\nonumber\\
		&\overset{(c3)}{=}\nu\bigg(\bigcap_{\substack{i\in[k]\\\sigma(i)=1}}\bigg(\big(\pi(U_i)\backslash C_i\big)\cup C_i\bigg)\bigg)
		\overset{(c4)}{\leq}\nu(\bigcap_{\substack{i\in[k]\\\sigma(i)=1}}C_i)+k\varepsilon.
	\end{align}
	
	We note that, for any $t_1\neq t_2\in\{0,1\}^{[k]}$ one has $C_{t_1}'\cap C_{t_2}'=\emptyset$.
	In fact, if we define $t_1\vee t_2\in\{0,1\}^{[k]}$ by
	$$(t_1\vee t_2)(i)=\max\{t_1(i),t_2(i)\}$$
	for every $i\in[k]$, then it is clear that $t_1\vee t_2>t_1$ or $t_1\vee t_2>t_2$.
	Without loss of generality, we can assume $t_1\vee t_2>t_1$,
	then $C_{t_1}'\subseteq C_{t_1}(\delta)\backslash{C_{t_1\vee t_2}(\delta)}$. While,
	\begin{align*}
		C_{t_1}'\cap C_{t_2}'\subseteq C_{t_1}(\delta)\cap C_{t_2}(\delta)= C_{t_1\vee t_2}(\delta).
	\end{align*}
	Hence, $C_{t_1}'\cap C_{t_2}'=\emptyset$.
	
	Now for any fixed $\tau\in\widetilde{V}$, we shall show $\tau\in\widetilde{\pi}(\widetilde{U})$.
	By $d_P(\nu,\tau)<\delta$ one has
	$\tau(A^\delta)\geq \nu(A)-\delta$ for every $A\in \mathcal{B}_Y$. Then for 
	every $\sigma\in\mathcal{E}$, 
	\begin{small}
	\begin{align}\label{pro4.1-2}
		\tau\big(C_{\sigma}(\delta)\big)&=\tau\bigg(\bigcap_{\substack{i\in[k]\\\sigma(i)=1}}C_{i}^{\delta}
		\bigg)\geq \tau\bigg(\big(\bigcap_{\substack{i\in[k]\\\sigma(i)=1}}C_i\big)^{\delta}\bigg)\nonumber\\
		&\geq\nu\bigg(\bigcap_{\substack{i\in[k]\\\sigma(i)=1}}C_i\bigg)-\delta\overset{(\ref{pro4.1-1})}{\geq}\nu(V_{\sigma})-k\varepsilon-\delta.
	\end{align}
\end{small}
	Moreover, for every $\sigma\in\{0,1\}^{[k]}$,
	\begin{align}\label{pro4.1-3}
		&\bigg(\bigcup_{\substack{\alpha\in\{0,1\}^{[k]}\\\alpha>\sigma}}C_{\alpha}(\delta)\bigg)^{\delta}
		=\bigcup_{\substack{\alpha\in\{0,1\}^{[k]}\\\alpha>\sigma}}\big(C_{\alpha}(\delta)\big)^{\delta}
		=\bigcup_{\substack{\alpha\in\{0,1\}^{[k]}\\\alpha>\sigma}}
		\big(\bigcap_{\substack{i\in[k]\\\alpha(i)=1}}C_{i}^{\delta}\big)^{\delta}\nonumber\\
		&\subseteq\bigg(\bigcup_{\substack{\alpha\in\{0,1\}^{[k]}\\\alpha>\sigma}}\big(\bigcap_{\substack{i\in[k]\\\alpha(i)=1}}C_i^{2\delta}\big)\bigg)
		\overset{(c3)}{\subseteq}\bigg(\bigcup_{\substack{\alpha\in\{0,1\}^{[k]}\\\alpha>\sigma}}\big(\bigcap_{\substack{i\in[k]\\\alpha(i)=1}}\pi(U_i)\big)\bigg)\\
		&=\bigcup_{\substack{\alpha\in\{0,1\}^{[k]}\\\alpha>\sigma}}V_{\alpha}.\nonumber
	\end{align}
	Since $d_P(\nu,\tau)<\delta$ one has
	\begin{align}\label{d_p}
		\tau(A)\leq \nu(A^\delta)+\delta\ \text{for\ every}\ A\in\mathcal{B}_Y.
	\end{align}
	Note that for every $\alpha,\sigma\in\{0,1\}^{[k]}$ with $\alpha>\sigma$, one has $C_{\alpha}(\delta)\subseteq C_{\sigma}(\delta)$
	and $V_{\alpha}\subseteq V_{\sigma}$. Then for every $\sigma\in\mathcal{E}$,
	\begin{align}\label{pro4.1-4}
		\tau(C_{\sigma}')&=\tau(C_{\sigma}(\delta))-\tau\bigg(\bigcup_{\substack{\alpha\in\{0,1\}^{[k]}\\\alpha>\sigma}}C_{\alpha}(\delta)\bigg)\nonumber\\
		&\overset{(\ref{pro4.1-2})}{\geq}\nu(V_{\sigma})-k\varepsilon-\delta-\tau\bigg(\bigcup_{\substack{\alpha\in\{0,1\}^{[k]}\\\alpha>\sigma}}C_{\alpha}(\delta)\bigg)\nonumber\\
		&\overset{(\ref{d_p})}{\geq} \nu(V_{\sigma})-k\varepsilon-\delta-\nu\bigg(\big(\bigcup_{\substack{\alpha\in\{0,1\}^{[k]}\\\alpha>\sigma}}C_{\alpha}(\delta)\big)^\delta\bigg)-\delta\\
		&\overset{(\ref{pro4.1-3})}{\geq}\nu(V_{\sigma})-k\varepsilon-2\delta-\nu\bigg(\bigcup_{\substack{\alpha\in\{0,1\}^{[k]}\\\alpha>\sigma}}V_{\alpha}\bigg)\nonumber\\
		&=\nu(V_{\sigma}')-k\varepsilon-2\delta\geq \nu(V_{\sigma}')-3k\varepsilon>0.\nonumber
	\end{align}
	
	By $\overline{\bigcup_{n\in N}\mathcal{M}_n(Y)}=\mathcal{M}(Y)$ ,
	there exist $\tau_n=\frac{1}{n}\sum\limits_{j=1}^{n}\delta_{y_{n,j}}\in\mathcal{M}_n(Y)$ for $n\in\mathbb{N}$ and some $y_{n,j}\in Y$, $j\in[n]$, such that 
	$\tau_n\to\tau$ as $n\to\infty$. Moreover, since $\widetilde{V}$ is open in $\mathcal{M}(Y)$, we can find $N_0\in\mathbb{N}$ such that $\tau_n\in\widetilde{V}$ 
	for $n\geq N_0$.
	
	Let $n\geq N_0$. For every $\sigma\in\{0,1\}^{[k]}$ we set 
	$$S_\sigma^{n}\colon=\big\{h\in[n]\colon y_{n,h}\in C_{\sigma}'\big\}.$$
	Since $\tau_n\in \widetilde{V}$, by (\ref{pro4.1-4}) and recall that, for any $t_1\neq t_2\in\{0,1\}^{[k]}$ one has 
	$C_{t_1}'\cap C_{t_2}'=\emptyset$, then
	\begin{enumerate}
		\item[(i)] $S_{t_1}^n\cap S_{t_2}^n=\emptyset$, for any $t_1\neq t_2\in\{0,1\}^{[k]}$.
		\item[(ii)] $\vert S_{\sigma}^n\vert\geq n(\nu(V_{\sigma}')-3k\varepsilon)$ for every 
		$\sigma\in\mathcal{E}$, where $\mathcal{E}$ is defined as (\ref{mathcal-E}).
	\end{enumerate}
	Now, for every $\sigma\in\{0,1\}^{[k]}$ and $i\in[k]$, we set 
	\begin{align}\label{U_isigma}
		U_{i,\sigma}\colon=U_i\cap\pi^{-1}(V_{\sigma}')\ \ \text{and} \ \ a_{i,\sigma}\colon=\mu(U_{i,\sigma}).
	\end{align}
	
	Fix any $\sigma\in\{0,1\}^{[k]}$. We can rewrite $\{i\in[k]\colon \sigma(i)=1\}$ as $\{i_1<i_2<\cdots<i_{q}\}$
	for some $q\in\mathbb{N}$. For $i_1$, we choose arbitrarily a subset $S_{\sigma,i_1}^{n}$ of $S_{\sigma}^n$ with 
	$\vert S_{\sigma,i_1}^n\vert=\lfloor \frac{a_{i_1,\sigma}}{\sum\limits_{\ell\in[k],\sigma(\ell)=1}a_{\ell,\sigma}}\vert S_{\sigma}^n\vert\rfloor $, where we note: $\frac{0}{0}=0$. 
	For $i_2$, we choose arbitrarily a subset $S_{\sigma,i_2}^{n}$ of $S_{\sigma}^n\setminus S_{\sigma,i_1}^n$ with
	$\vert S_{\sigma,i_2}^n\vert=\lfloor \frac{a_{i_2,\sigma}}{\sum\limits_{\ell\in[k],\sigma(\ell)=1}a_{\ell,\sigma}}\vert S_{\sigma}^n\vert\rfloor$.
	We continue inductively obtaining 
	$$S_{\sigma,i_j}^n\subseteq S_{\sigma}^n\setminus\big(S_{\sigma,i_1}^n\cup S_{\sigma,i_2}^n\cdots\cup S_{\sigma,i_{j-1}}^n\big)$$
	for $j=3,4,\cdots,q-1$, with
	$\vert S_{\sigma,i_j}^n\vert=\lfloor \frac{a_{i_j,\sigma}}{\sum\limits_{\ell\in[k],\sigma(\ell)=1}a_{\ell,\sigma}}\vert S_{\sigma}^n\vert\rfloor$.
	We set $S_{\sigma,i_q}^n=S_{\sigma}^n\setminus(\bigcup_{j=1}^{q-1}S_{\sigma,i_j}^n)$.
	Additionally, we note that
	$$y_{n,h}\in C_{\sigma}'\subseteq\bigcap\limits_{\substack{i\in[k]\\\sigma(i)=1}}\pi(U_i)=\bigcap_{\ell=1}^{q} \pi(U_{i_\ell})$$
	for every $h\in S_{\sigma}^n$. Then we have the following properties for $S_{\sigma,i}^n$, $i\in[k]$.
	\begin{enumerate}
		\item[(i*)] $\vert S_{\sigma,i}^n\vert\geq\lfloor \frac{a_{i,\sigma}}{\sum\limits_{\ell\in[k],\sigma(\ell)=1}a_{\ell,\sigma}}\vert S_{\sigma}^n\vert
		\rfloor$ for every $i\in[k]$ with $\sigma(i)=1$.
		\item[(ii*)] For every $i\in[k]$ with $\sigma(i)=1$, if $h\in S_{\sigma,i}^n$ then there exists $x_{n,h}^{\sigma}\in U_i$ satisfying $\pi(x_{n,h}^{\sigma})=y_{n,h}$.
		\item[(iii*)]$S_{\sigma,i'}^n\cap S_{\sigma,i''}^n=\emptyset$ for every $i'\neq i''\in\{i\in[k]\colon\sigma(i)=1\}$ and $\bigcup\limits_{i\in[k],\sigma(i)=1}S_{\sigma,i}^n=S_{\sigma}^n$.
	\end{enumerate}
	Since $\pi$ is surjective, for every $h'\in S_0^n:=[n]\bigg\backslash\bigg(\bigcup\limits_{\sigma\in\{0,1\}^{[k]}}S_{\sigma}^n\bigg)$, 
	there exists $x_{n,h'}\in X$ such that $\pi(x_{n,h'})=y_{n,h'}$. Now we set
	\begin{align}\label{mu_n}
		\mu_n\colon&=\frac{1}{n}\big(\sum_{\sigma\in \{0,1\}^{[k]}}\sum_{h\in S_{\sigma}^n}\delta_{x_{n,h}^{\sigma}}+\sum_{h'\in S_0^n}\delta_{x_{n,h'}}\big)\nonumber\\
		&\overset{\text{(iii*)}}{=}\frac{1}{n}\big(\sum_{\sigma\in \{0,1\}^{[k]}}\sum_{\substack{i\in[k]\\ \sigma(i)=1}}\sum_{h\in S_{\sigma,i}^n}
		\delta_{x_{n,h}^{\sigma}}+\sum_{h'\in S_0^n}\delta_{x_{n,h'}}\big)
	\end{align}
	
	Clearly, $\widetilde{\pi}(\mu_n)=\tau_n$. We claim that
	$\mu_{n}(U_{i_0})>\eta_{i_0}$ for every 
	$i_0\in[k]$ when $n$ is sufficiently large. Once it is true, we have 
	$$\mu_n\in\mathbb{W}(U_1,U_2,\cdots,U_k;\eta_1,\eta_2,\cdots,\eta_k).$$
	Then we can finde a sequence $n_1<n_2<\cdots$ such that $\lim\limits_{i\to\infty}\mu_{n_i}=\mu'$ 
	for some $\mu'\in\mathcal{M}(X)$. 
	Thus
	$$\mu'\in\overline{\mathbb{W}(U_1,U_2,\cdots,U_k;\eta_1,\eta_2,\cdots,\eta_k)}\subset\widetilde{U}$$
	and $\widetilde{\pi}(\mu')=\lim\limits_{i\to\infty}\widetilde{\pi}(\mu_{n_i})=\lim\limits_{i\to\infty}\tau_{n_i}=\tau$. By the arbitrariness of $\tau$, one has $\widetilde{V}\subseteq \widetilde{\pi}(\widetilde{U})$. This will end our proof. 
	
	Now, we shall show the claim: 
	$\mu_{n}(U_{i_0})>\eta_{i_0}$ for every $i_0\in[k]$ when $n$ is sufficiently large. 
	To show that, for any fixed $i_0\in\{1,2,\cdots,k\}$, we need the following facts.
	
	\textbf{Fact 1}: $\sum\limits_{\sigma\in\mathcal{E},\sigma(i_0)=1}\mu\big(U_{i_0}\cap\pi^{-1}(V_{\sigma}')\big)=\mu\bigg(U_{i_0}\cap\bigcup\limits_{\sigma\in\mathcal{E},\sigma(i_0)=1}\pi^{-1}(V_{\sigma}')\bigg)$.
	
	In fact, for any $t_1\neq t_2\in\{0,1\}^{[k]}$, if $y\in V_{t_1}'\cap V_{t_2}'\subseteq V_{t_1}\cap V_{t_2}$, then 
	$y\in V_{t_1\vee t_2}$. Since $t_1\vee t_2>t_1\ \text{or}\ t_2$, one has $y\notin V_{t_1}'$ or $y\notin V_{t_2}'$, which is a contradiction of $y\in V_{t_1}'\cap V_{t_2}'$. Hence, 
	$V_{t_1}'\cap V_{t_2}'=\emptyset$. Then Fact 1 follows.
	
	\textbf{Fact 2}: $\nu\bigg(\bigg(\bigcup\limits_{\substack{\sigma\in\mathcal{E}\\\sigma(i_0)=1}}V_{\sigma}'\bigg)\Delta
	\bigg(\bigcup\limits_{\substack{\sigma\in\{0,1\}^{[k]}\\\sigma(i_0)=1}}V_{\sigma}\bigg)\bigg)=0$, where $A\Delta B$ denotes $(A\setminus B)\cup(B\setminus A)$ for every $A,B\in\mathcal{B}_Y$.
	
	In fact, it is clear that 
	\begin{align}\label{ttt}
		\bigcup\limits_{\substack{\sigma\in\mathcal{E}\\\sigma(i_0)=1}}V_{\sigma}'\subseteq
		\bigcup\limits_{\substack{\sigma\in\{0,1\}^{[k]}\\\sigma(i_0)=1}}V_{\sigma}
	\end{align}
	Since $\sigma\in\{0,1\}^{[k]}\backslash\mathcal{E}$ implies $\nu(V_{\sigma}')=0$, one has
	\begin{align}\label{e11111}
		\nu\bigg(\bigcup\limits_{\substack{\sigma\in\mathcal{E}\\\sigma(i_0)=1}}V_{\sigma}'\bigg)
		=\nu\bigg(\bigcup\limits_{\substack{\sigma\in\{0,1\}^{[k]}\\\sigma(i_0)=1}}V_{\sigma}'\bigg).
	\end{align}
	Clearly, $\bigcup\limits_{\substack{\sigma\in\{0,1\}^{[k]}\\\sigma(i_0)=1}}V_{\sigma}\supseteq \bigcup\limits_{\substack{\sigma\in\{0,1\}^{[k]}\\\sigma(i_0)=1}}
	V_{\sigma}'$. Moreover, for 
	any given $x\in\bigcup\limits_{\substack{\sigma\in\{0,1\}^{[k]}\\\sigma(i_0)=1}}V_{\sigma}$, if we define $\sigma'$ as 
	$$\sigma'(i)=\max\big\{t(i)\colon t\in\{0,1\}^{[k]}\ \text{with} \ x\in V_{t}\big\}$$
	for every $i\in[k]$, then 
	$\sigma'(i_0)=1$ and $x\in V_{\sigma'}^{'}\subseteq \bigcup\limits_{\substack{\sigma\in\{0,1\}^{[k]}\\\sigma(i_0)=1}}
	V_{\sigma}'$. Hence 
	\begin{align}\label{e2222}
		\bigcup\limits_{\substack{\sigma\in\{0,1\}^{[k]}\\\sigma(i_0)=1}}V_{\sigma}=\bigcup\limits_{\substack{\sigma\in\{0,1\}^{[k]}\\\sigma(i_0)=1}}
		V_{\sigma}'.
	\end{align} 
	By (\ref{ttt}), (\ref{e11111}) and (\ref{e2222}), Fact 2 holds.
	
	\textbf{Fact 3}: For every $\sigma\in\mathcal{E}$, $\sum\limits_{\ell\in[k],\sigma(\ell)=1}a_{\ell,\sigma}\leq\nu(V_{\sigma}')$.
	
	Note that, $U_1,U_2,\cdots,U_k$ are disjoint. Then by (\ref{U_isigma}) we have
	\begin{align*}
		&\sum_{\ell\in[k],\sigma(\ell)=1}a_{\ell,\sigma}\overset{(\ref{U_isigma})}{=}
		\sum_{\ell\in[k],\sigma(\ell)=1}\mu(U_{\ell,\sigma})\\
		\overset{(\ref{U_isigma})}{=}&\sum_{\ell\in[k],\sigma(\ell)=1}\mu(U_{\ell}\cap\pi^{-1}(V_{\sigma}'))
		=\mu\bigg(\big(\bigcup_{\ell\in[k],\sigma(\ell)=1}U_{\ell}\big)\cap \pi^{-1}(V_{\sigma}')\bigg)\\
		\leq&\mu(\pi^{-1}(V_{\sigma}'))=\nu(V_{\sigma}').
	\end{align*}
	Thus Fact 3 holds.
	
	Now by Fact 1-3, we have
	\begin{align}\label{pro4.1-5}
		\sum_{\sigma\in\mathcal{E},\sigma(i_0)=1}a_{i_0,\sigma}&\overset{(\ref{U_isigma})}{=}\sum_{\sigma\in\mathcal{E},\sigma(i_0)=1}\mu(U_{i_0}\cap\pi^{-1}V_{\sigma}')\nonumber\\
		&\overset{\text{(Fact\ 1)}}{=}\mu\bigg(U_{i_0}\cap\bigcup\limits_{\sigma\in\mathcal{E},\sigma(i_0)=1}\pi^{-1}(V_{\sigma}')\bigg)\\
		&\overset{\text{(Fact\ 2)}}{=}\mu\bigg(U_{i_0}\cap\pi^{-1}\big(\bigcup\limits_{\substack{\sigma\in\{0,1\}^{[k]}\\\sigma(i_0)=1}}V_{\sigma}\big)\bigg).\nonumber
	\end{align}
	We define $t\in\{0,1\}^{[k]}$ as $t(i_0)=1$ and $t(i)=0$ for each $i\in[k]\backslash\{i_0\}$. Then 
	$\pi(U_{i_0})=V_{t}\subseteq \bigcup\limits_{\substack{\sigma\in\{0,1\}^{[k]}\\\sigma(i_0)=1}}V_{\sigma}$. Thus
	$U_{i_0}\subseteq\pi^{-1}\big(\bigcup\limits_{\substack{\sigma\in\{0,1\}^{[k]}\\\sigma(i_0)=1}}V_{\sigma}\big)$
	and by (\ref{pro4.1-5}) we have
	\begin{align}\label{pro4.1-6}
		\sum_{\sigma\in\mathcal{E},\sigma(i_0)=1}a_{i_0,\sigma}=\mu(U_{i_0}).
	\end{align}
	
	Then for any $n>N_0$, we have 
	\begin{align*}
		\mu_n(U_{i_0})&\overset{(\ref{mu_n})}{\geq}\frac{1}{n}\sum_{\sigma\in\{0,1\}^{[k]}}\sum_{\substack{i\in[k]\\ \sigma(i)=1}}\sum_{h\in S_{\sigma,i}^n}\delta_{x_{n,h}^{\sigma}}(U_{i_0})
		\geq\frac{1}{n}\sum_{\sigma\in\mathcal{E}}\sum_{\substack{i\in[k]\\ \sigma(i)=1}}\sum_{h\in S_{\sigma,i}^n}\delta_{x_{n,h}^{\sigma}}(U_{i_0})\\
		&\overset{\text{(ii*)}}{\geq}\frac{1}{n}\sum_{\substack{\sigma\in\mathcal{E}\\ \sigma(i_0)=1}}\vert S_{\sigma,i_0}^n\vert
		\overset{\text{(i*)}}{\geq}\frac{1}{n}\sum_{\substack{\sigma\in\mathcal{E}\\ \sigma(i_0)=1}}\bigg\lfloor \frac{a_{i_0,\sigma}}{\sum\limits_{\ell\in[k],\sigma(\ell)=1}a_{\ell,\sigma}}\vert S_{\sigma}^n\vert\bigg\rfloor\\
		&\geq \frac{1}{n}\bigg(\sum_{\substack{\sigma\in\mathcal{E}\\ \sigma(i_0)=1}}\bigg(\frac{a_{i_0,\sigma}}{\sum\limits_{\ell\in[k],\sigma(\ell)=1}a_{\ell,\sigma}}\vert S_{\sigma}^n\vert\bigg)\bigg)-\frac{2^k}{n}\\
		&\overset{\text{(ii)}}{\geq}\frac{1}{n}\bigg(\sum_{\substack{\sigma\in\mathcal{E}\\ \sigma(i_0)=1}}\bigg(\frac{a_{i_0,\sigma}}{\sum\limits_{\ell\in[k],\sigma(\ell)=1}a_{\ell,\sigma}}n\big(\nu(V_{\sigma}')-3k\varepsilon\big)\bigg)\bigg)-\frac{2^k}{n}\\
		&\geq\sum_{\substack{\sigma\in\mathcal{E}\\\sigma(i_0)=1}}\bigg(\frac{a_{i_0,\sigma}}{\sum\limits_{\ell\in[k],\sigma(\ell)=1}a_{\ell,\sigma}}\nu(V_{\sigma}')\bigg)-2^k\cdot 3k\varepsilon-\frac{2^k}{n}\\
		&\overset{\text{(Fact\ 3)}}{\geq}\sum_{\substack{\sigma\in\mathcal{E}\\\sigma(i_0)=1}}a_{i_0,\sigma}-2^k\cdot 3k\varepsilon-\frac{2^k}{n}
		\overset{(\ref{pro4.1-6})}{=}\mu(U_{i_0})-2^k\cdot 3k\varepsilon-\frac{2^k}{n}.
	\end{align*}
	Then by letting $n\to\infty$, for every $i_0\in[k]$ since $\mu(U_{i_0})>\eta_{i_0}+6^{k}\varepsilon$ by (c1), we have $\mu_n(U_{i_0})>\eta_{i_0}$. This ends the proof of the claim. 
	\end{proof}
\begin{appendix}
	\section{proof of Theorem \ref{product is upe}}\label{B}
	Let $\pi\colon (X,G)\to (Y,G)$ be a factor map between two $G$-systems. For any $n\in\mathbb{N}$ and a tuple
	$\mathcal{V}=(V_1,V_2,\cdots,V_n)$ of subsets of $X$, recall that we denote by $\mathcal{P}_{\mathcal{V}}^{\pi}$ the set of all 
	independence sets of $\mathcal{V}$ with respect to $\pi$. 
	
	Identifying subsets of $G$ with elements of 
	$\{0,1\}^G$ by taking indicator functions, we may think of $\mathcal{P}_{\mathcal{V}}^{\pi}$ as a subset 
	of $\{0,1\}^G$. Endow $\{0,1\}^G$ with the shift given by 
	$$(s\sigma)(t)=\sigma(ts)$$
	for all $\sigma\in\{0,1\}^G$ and $s,t\in G$. It is clear that $\mathcal{P}_{\mathcal{V}}^{\pi}$ is shift-invariant.
	Moreover, when $V_1,V_2,\cdots,V_n$ are closed subsets of 
	$X$, $\mathcal{P}_{\mathcal{V}}^{\pi}$ is also closed in $\{0,1\}^G$.
	
	We say a closed and shift-invariant subset $\mathcal{P}\subseteq\{0,1\}^G$ \textbf{has
		positive density} if there exists constant $c>0$ such that for every nonempty subset $F$ of $G$, there exists 
		$I\in\mathcal{P}$ with $I\subseteq F$, such that $\vert I\vert>c\vert F\vert$. 
		Then by Corollary \ref{rel-entropy-pair}, we immediately have the following property.
	\begin{prop}\label{B_0}Let $\pi\colon (X,G)\to(Y,G)$ be a factor map between two $G$-systems, $(x_1,x_2)\in X\times X\backslash\Delta(X)$.		
		Then $(x_1,x_2)\in E(X,G|\pi)$ if and only if for any disjoint open subsets $V_1, V_2$ of $X$ with $x_i\in V_i$ for $i=1,2$, 
		$\mathcal{P}_{\{V_1,V_2\}}^{\pi}$ has positive density.
	\end{prop}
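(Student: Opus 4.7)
The proposition is essentially a restatement of Corollary \ref{rel-entropy-pair} combined with a reformulation of independence in terms of the subshift $\mathcal{P}^{\pi}_{\mathcal{V}}$, so the plan is to reduce everything to showing that, for a tuple $\mathcal{V}=(V_1,\dots,V_n)$ of subsets of $X$, the statement ``$\mathcal{V}$ is independent with respect to $\pi$'' (in the sense of Remark \ref{rem1}) is equivalent to ``$\mathcal{P}^{\pi}_{\mathcal{V}}$ has positive density'' (in the sense just defined above the proposition).

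First I would invoke Corollary \ref{rel-entropy-pair}: $(x_1,x_2)\in E(X,G|\pi)$ if and only if for every pair of disjoint open neighbourhoods $V_i\ni x_i$, the pair $\{V_1,V_2\}$ is independent with respect to $\pi$. Thus the whole proposition reduces to the equivalence of independence and positive density of $\mathcal{P}^{\pi}_{\{V_1,V_2\}}$.

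For this equivalence, I would use the function $\mathcal{I}_{\mathcal{V}}(F)=\max\{|I|\colon I\subseteq F,\ I\in\mathcal{P}^{\pi}_{\mathcal{V}}\}$ introduced in Remark \ref{rem1}. If $\mathcal{P}^{\pi}_{\mathcal{V}}$ has positive density with constant $c>0$, then for every nonempty $F\in\mathcal{F}(G)$ one has $\mathcal{I}_{\mathcal{V}}(F)/|F|>c$, so $\inf_F\mathcal{I}_{\mathcal{V}}(F)/|F|\ge c>0$; by Theorem \ref{amenable converges} this infimum equals $\lim_F \mathcal{I}_{\mathcal{V}}(F)/|F|$, so $\mathcal{V}$ is independent with respect to $\pi$. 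Conversely, if $\mathcal{V}$ is independent, then Theorem \ref{amenable converges} again gives $\inf_F\mathcal{I}_{\mathcal{V}}(F)/|F|=\lim_F\mathcal{I}_{\mathcal{V}}(F)/|F|>0$; picking any $0<c<\lim_F\mathcal{I}_{\mathcal{V}}(F)/|F|$, we obtain for every nonempty $F\in\mathcal{F}(G)$ an element $I\in\mathcal{P}^{\pi}_{\mathcal{V}}$ with $I\subseteq F$ and $|I|>c|F|$, which is precisely positive density.

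The only step that needs any care is checking that $\mathcal{I}_{\mathcal{V}}$ really satisfies the hypotheses of Theorem \ref{amenable converges} (right-invariance $\mathcal{I}_{\mathcal{V}}(Fs)=\mathcal{I}_{\mathcal{V}}(F)$ and the averaging inequality), but this is built into Remark \ref{rem1} and follows from the shift-invariance of $\mathcal{P}^{\pi}_{\mathcal{V}}\subseteq\{0,1\}^G$ noted just before the proposition; so there is no real obstacle. Combining the two equivalences finishes the proof.
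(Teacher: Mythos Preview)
Your proposal is correct and follows the paper's approach exactly: the paper states that Proposition~\ref{B_0} follows immediately from Corollary~\ref{rel-entropy-pair}, and you have simply spelled out the (indeed immediate) equivalence between ``$\{V_1,V_2\}$ is independent with respect to $\pi$'' and ``$\mathcal{P}^{\pi}_{\{V_1,V_2\}}$ has positive density'' via Remark~\ref{rem1} and Theorem~\ref{amenable converges}.
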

	
	Let $e_G$ be the unit of $G$, we set 
	$$[e_G]\colon=\big\{\omega\in\{0,1\}^G\colon \omega(e_G)=1\big\}.$$ 
	Then we have the follwing lemmas.
	\begin{lem}\cite[Lemma 12.6]{Kerr-Li-2016}\label{B_1} Let $A$ be a closed subset of $X$. Then 
		$\mathcal{P}_A\colon=\big\{I\subseteq G\colon \bigcap_{g\in I}g^{-1}A\neq\emptyset\big\}$ has positive density if and only if there exists
		$\mu\in\mathcal{M}(X,G)$ with $\mu(A)>0$. 
	\end{lem}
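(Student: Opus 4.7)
The plan is to prove the two directions of the equivalence separately. The forward direction $(\Leftarrow)$ uses averaging against the invariant measure, while the reverse direction $(\Rightarrow)$ constructs an invariant measure as a weak-$*$ limit of empirical measures along a F$\phi$lner sequence.

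For the implication $(\Leftarrow)$, I would suppose there exists $\mu\in\mathcal{M}(X,G)$ with $\mu(A)>0$, and set $c=\mu(A)$. For any nonempty finite $F\subseteq G$, by $G$-invariance of $\mu$,
$$\int_X\sum_{g\in F}1_{g^{-1}A}(x)\,d\mu(x)=\sum_{g\in F}\mu(g^{-1}A)=|F|\mu(A).$$
Hence there exists $x\in X$ with $|\{g\in F:gx\in A\}|\geq c|F|$. Setting $I=\{g\in F:gx\in A\}$ gives $x\in\bigcap_{g\in I}g^{-1}A$, so $I\in\mathcal{P}_A$, $I\subseteq F$, and $|I|\geq c|F|$; this establishes positive density.

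For the implication $(\Rightarrow)$, assume $\mathcal{P}_A$ has positive density with constant $c>0$, and fix a F$\phi$lner sequence $\{F_n\}_{n=1}^{\infty}$ of $G$. For each $n$, choose $I_n\in\mathcal{P}_A$ with $I_n\subseteq F_n$ and $|I_n|\geq c|F_n|$, and pick $x_n\in\bigcap_{g\in I_n}g^{-1}A$. Form the empirical measure
$$\mu_n=\frac{1}{|F_n|}\sum_{g\in F_n}\delta_{gx_n}\in\mathcal{M}(X).$$
By weak-$*$ compactness of $\mathcal{M}(X)$, after passing to a subsequence we may assume $\mu_n\to\mu$ for some $\mu\in\mathcal{M}(X)$. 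The F$\phi$lner property yields $G$-invariance of $\mu$: for every $h\in G$ and $f\in C(X)$, a direct estimate gives
$$\bigg|\int f\,d(h\mu_n)-\int f\,d\mu_n\bigg|\leq\|f\|_\infty\cdot\frac{|F_n\Delta hF_n|}{|F_n|}\longrightarrow 0.$$
Moreover $\mu_n(A)\geq|I_n|/|F_n|\geq c$, since $gx_n\in A$ for every $g\in I_n$.

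The main step is then to transfer the lower bound from $\mu_n$ to $\mu$: because $A$ is closed, the Portmanteau theorem (upper semicontinuity of weak-$*$ convergence on closed sets) gives $\mu(A)\geq\limsup_n\mu_n(A)\geq c>0$, producing the desired invariant probability measure. The principal technical point is this last step, together with the F$\phi$lner-averaging invariance argument; the closedness of $A$ is essential here, because without it the limit measure could assign zero mass to $A$ and the hypothesis of the lemma could not be recovered.
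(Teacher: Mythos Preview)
Your proof is correct and follows the standard route. Note that the paper does not give its own proof of this lemma; it is simply cited from \cite[Lemma~12.6]{Kerr-Li-2016}, so there is nothing to compare against beyond the original reference, whose argument is essentially the one you have reproduced: an averaging argument against the invariant measure for $(\Leftarrow)$, and the Bogolyubov--Krylov construction via F{\o}lner averages of empirical measures together with the Portmanteau upper bound on closed sets for $(\Rightarrow)$.

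One cosmetic point: in the $(\Leftarrow)$ direction you obtain $|I|\geq c|F|$ from the pigeonhole/averaging step, whereas the paper's definition of positive density asks for the strict inequality $|I|>c|F|$. This is harmless---just replace $c=\mu(A)$ by any $c'\in(0,\mu(A))$---but it is worth saying explicitly.
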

	The following lemma is proved when $G=\mathbb{Z}$ in \cite[Proposition 3.9]{Huang-Ye-Zhang-1}. We omit the proof.
	\begin{lem}\label{prop 0} Let $\pi: (X,G)\to (Z,G)$, $\pi_1: (X,G)\to (Y,G)$ and $\pi_2: (Y,G)\to (Z,G)$ be three factor maps such that $\pi=\pi_2\cdot \pi_1$. 
		Then $\pi$ has rel-u.p.e implies $\pi_2$ has rel-u.p.e.
	\end{lem}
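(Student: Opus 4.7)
The plan is to translate the rel-u.p.e property of $\pi$ into rel-u.p.e for $\pi_2$ by lifting witnesses through $\pi_1$. By Corollary~\ref{rel-entropy-pair}, it suffices to fix an arbitrary $(y_1,y_2)\in R_{\pi_2}\setminus\Delta(Y)$ together with disjoint open subsets $V_1,V_2$ of $Y$ such that $y_i\in V_i$, and to prove that $\{V_1,V_2\}$ is independent with respect to $\pi_2$.

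For the lift, choose any $x_i\in\pi_1^{-1}(y_i)$; since $\pi_1$ is a factor map, these preimages are non-empty. The points $x_1,x_2$ are distinct (their images under $\pi_1$ differ) and satisfy $\pi(x_1)=\pi_2(y_1)=\pi_2(y_2)=\pi(x_2)$, so $(x_1,x_2)\in R_{\pi}\setminus\Delta(X)$. Set $U_i:=\pi_1^{-1}(V_i)$; these are disjoint open subsets of $X$ containing $x_i$. Because $\pi$ has rel-u.p.e, Corollary~\ref{rel-entropy-pair} supplies a constant $c>0$ such that for every $F\in\mathcal{F}(G)$ there exist $I\subseteq F$ with $|I|>c|F|$ and $z\in Z$ with
$$\pi^{-1}(z)\cap\bigcap_{g\in I}g^{-1}U_{\sigma(g)}\neq\emptyset \quad \text{for every } \sigma\in\{1,2\}^{I}.$$

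The remaining step is to push this independence down through $\pi_1$. For each $\sigma$, pick $x_\sigma$ in the intersection above and set $y_\sigma:=\pi_1(x_\sigma)$. The equivariance $\pi_1\circ g=g\circ\pi_1$ combined with $U_{\sigma(g)}=\pi_1^{-1}(V_{\sigma(g)})$ yields $y_\sigma\in g^{-1}V_{\sigma(g)}$ for every $g\in I$, while $\pi_2(y_\sigma)=\pi(x_\sigma)=z$ forces $y_\sigma\in\pi_2^{-1}(z)$. Hence $y_\sigma\in\pi_2^{-1}(z)\cap\bigcap_{g\in I}g^{-1}V_{\sigma(g)}$, so $\{V_1,V_2\}$ is an independent pair with respect to $\pi_2$ with the same density constant $c$. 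Applying Corollary~\ref{rel-entropy-pair} in the reverse direction gives $(y_1,y_2)\in E(Y,G\mid\pi_2)$, and since $(y_1,y_2)$ was arbitrary this proves $\pi_2$ has rel-u.p.e.

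There is no substantial obstacle here; the argument is essentially a bookkeeping exercise verifying that the independence-set definition respects composition of factor maps, using only equivariance of $\pi_1$ and the identity $\pi_2\circ\pi_1=\pi$. In particular, the proof does not invoke the fully supported hypothesis $\supp(Y)=Y$ that appears elsewhere in the paper, so $\pi_2$ inherits rel-u.p.e from $\pi$ unconditionally.
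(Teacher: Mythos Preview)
Your argument is correct. The paper actually omits the proof of this lemma entirely, citing \cite[Proposition~3.9]{Huang-Ye-Zhang-1} for the $\mathbb{Z}$-case, so there is no in-paper proof to compare against; your lifting argument via $U_i=\pi_1^{-1}(V_i)$ and the observation that $\mathcal{P}_{\{U_1,U_2\}}^{\pi}\subseteq\mathcal{P}_{\{V_1,V_2\}}^{\pi_2}$ is precisely the natural proof one would expect and matches the standard approach in the cited reference.
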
 

	For a factor map $\pi\colon (X,\mathbb{Z})\to(Y,\mathbb{Z})$ between two $\mathbb{Z}$-systems, the authors in \cite{Huang-Ye-Zhang-1} proved that 
	if $\pi$ has rel-u.p.e, then $supp(Y)=Y$ implies $supp(X)=X$ (see \cite[Theorem 5.4]{Huang-Ye-Zhang-1}). For discrete countable amenable group $G$, we have 
	the same result. 
	\begin{prop}\label{supp} 
		Let $\pi\colon(X,G)\to(Y,G)$ be a factor map between two $G$-systems. If $\pi$ has rel-u.p.e and $supp(Y)=Y$, then $supp(X)=X$.
	\end{prop}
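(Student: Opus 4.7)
The plan is to argue by contradiction. Assume $\supp(X,G)\neq X$ and fix $x_1\in X\setminus\supp(X,G)$ together with a closed neighbourhood $W$ of $x_1$ disjoint from $\supp(X,G)$. The first step is to exploit the hypothesis $\supp(Y,G)=Y$ in order to produce a companion point $x_2\in\pi^{-1}(\pi(x_1))\setminus\{x_1\}$. Writing $y_0=\pi(x_1)$, pick $\nu\in\mathcal{M}(Y,G)$ with $y_0\in\supp(\nu)$. Any element of $\widetilde{\pi}^{-1}(\nu)$ can be averaged along a F$\phi$lner sequence of $G$, and a weak$^{\ast}$ cluster point of the averages lies in $\mathcal{M}(X,G)$ while still projecting onto $\nu$; this yields $\mu\in\mathcal{M}(X,G)$ with $\widetilde{\pi}\mu=\nu$. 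From $\pi(\supp(\mu))=\supp(\widetilde{\pi}\mu)=\supp(\nu)\ni y_0$ we obtain $x_2\in\supp(\mu)$ with $\pi(x_2)=y_0$, and $x_2\neq x_1$ because $\supp(\mu)\subseteq\supp(X,G)$ misses $W$.

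Next I would choose disjoint open neighbourhoods $V_1$ of $x_1$ with $\overline{V_1}\subseteq W$ and $V_2$ of $x_2$. Since $\pi$ has rel-u.p.e, $(x_1,x_2)\in E(X,G\vert\pi)$, so Proposition \ref{B_0} furnishes that $\mathcal{P}^{\pi}_{\{V_1,V_2\}}$ has positive density. The core observation is the containment $\mathcal{P}^{\pi}_{\{V_1,V_2\}}\subseteq\mathcal{P}_{\overline{V_1}}$: for any $J$ in the left-hand side and any finite $I\subseteq J$, specialising the definition at the constant map $\sigma\equiv 1$ yields a point in $\bigcap_{g\in I}g^{-1}V_1\subseteq\bigcap_{g\in I}g^{-1}\overline{V_1}$, whence the finite intersection property (the sets $g^{-1}\overline{V_1}$ are closed in the compact space $X$) forces $\bigcap_{g\in J}g^{-1}\overline{V_1}\neq\emptyset$. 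Consequently $\mathcal{P}_{\overline{V_1}}$ also has positive density, so Lemma \ref{B_1} produces $\mu'\in\mathcal{M}(X,G)$ with $\mu'(\overline{V_1})>0$. Then $\supp(\mu')\cap\overline{V_1}\neq\emptyset$, contradicting $\overline{V_1}\subseteq W\subseteq X\setminus\supp(X,G)$, and completing the proof.

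The principal obstacle is genuinely the soft opening move, namely producing the second fibre point $x_2\neq x_1$. This rests on two standard facts that deserve brief justification in the write-up: surjectivity of $\widetilde{\pi}\colon\mathcal{M}(X,G)\to\mathcal{M}(Y,G)$, which follows from amenability of $G$ via F$\phi$lner averaging applied to any element of $\widetilde{\pi}^{-1}(\nu)$, and the identity $\supp(\widetilde{\pi}\mu)=\pi(\supp(\mu))$ for continuous maps between compact metric spaces. Once $x_2$ is in hand, the remainder is a clean juxtaposition of the reformulation in Proposition \ref{B_0} with the invariant-measure criterion in Lemma \ref{B_1}; no delicate independence or Sauer--Shelah arguments are needed here.
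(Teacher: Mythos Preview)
Your argument is correct and more direct than the paper's. Both proofs begin identically: pick $x_1\notin\supp(X,G)$, lift an invariant measure $\nu\in\mathcal{M}(Y,G)$ with $\pi(x_1)\in\supp(\nu)$ to some $\mu\in\mathcal{M}(X,G)$, and locate $x_2\in\supp(\mu)\cap\pi^{-1}(\pi(x_1))$, necessarily distinct from $x_1$. The divergence is in how the contradiction is extracted. You observe the containment $\mathcal{P}^{\pi}_{\{V_1,V_2\}}\subseteq\mathcal{P}_{\overline{V_1}}$ (specialise to $\sigma\equiv 1$ and use compactness) and apply Lemma~\ref{B_1} directly in $X$ to manufacture an invariant measure charging $\overline{V_1}$. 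The paper instead takes a detour: via Urysohn's lemma it builds a map $\phi\colon X\to[0,1]^G\times Y$ and an intermediate factor $W=\phi(X)$ through which $\pi$ factors as $\pi=\pi_2\circ\phi$; Lemma~\ref{prop 0} transfers rel-u.p.e.\ to $\pi_2$, and Lemma~\ref{B_1} is then invoked inside $W$ to force $\phi(x_1)\in\supp(W,G)$, contradicting $\supp(W,G)\subseteq\{1^G\}\times Y$. Your route bypasses both the symbolic coding and Lemma~\ref{prop 0}, making the passage from relative independence to $\mathcal{P}_{\overline{V_1}}$ explicit; this is a genuine simplification, and the explicit containment you isolate is essentially what the paper relies on implicitly when it appeals to Lemma~\ref{B_1} in $W$.
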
 
	\begin{proof}
		Assume that $supp(X)\not=X$, then there exist $x_1\in X$ and an open neighbourhood $V$ of $x_1$ such that $V\cap supp(X)=\emptyset$.
		Let $U=\bigcup_{g\in G}g^{-1}V$, then $U$ is open and $\mu(U)=0$ for every $\mu\in\mathcal{M}(X,G)$. 
		Thus $supp(X)\subseteq U^c$, where $U^c=X\setminus U$.

		Let $y=\pi(x_1)$. We note that, $\pi^{-1}\{y\}\cap U^c\neq \emptyset$. In fact,
		since $supp(Y)=Y$ there exits $\nu\in\mathcal{M}(Y,G)$ such that $y\in supp(\nu)$.
		Then there exists $\tilde{\mu}\in\mathcal{M}(X,G)$ such that $\widetilde{\pi}(\tilde{\mu})=\nu$. 
		If $\pi^{-1}\{y\}\subseteq U$, there exists $\delta>0$ such that 
        $\pi^{-1}B(y,\delta)\subseteq U$. Then $\nu(B(y,\delta))=\tilde{\mu}(\pi^{-1}B(y,\delta))=0$. This contradicts
		$y\in supp(\nu)$. Thus there exists $x_2\in U^c$ such that $\pi(x_2)=y$.
		
		By Urysohn's lemma, there exists continuous function $f: X\to[0,1]$ such that $f(x_1)=0$ and $f(x)=1$ for any $x\in U^{c}$. 
		We set 
		$$F\colon X\to [0,1]^G \ \ \text{by}\ \  (F(x))(g)=f(gx).$$
		Consider the $G$-action on $[0,1]^G$ defined by $(g\omega)(h)=\omega(hg)$ for every $\omega\in[0,1]^G$ and 
		$g,h\in G$. We define a factor map
		$$\phi\colon (X,G)\to ([0,1]^G\times Y,G), \ \ \text{by}\ \ \phi(x)=(F(x),\pi(x)).$$
		Let $W=\phi(X)$, and $\pi_2\colon (W,G)\to (Y,G)$ be the projection map to the second coordinate. Then $\pi=\pi_2\circ \phi$.
		By Proposition \ref{prop 0}, $\pi_2$ has rel-u.p.e.
        Note that $\pi_2(\phi(x_1))=\pi(x_1)=\pi(x_2)=\pi_2(\phi(x_2))$ and $\phi(x_1)\neq\phi(x_2)$. Thus
		$$(\phi(x_1),\phi(x_2))\in R_{\pi_2}\setminus \Delta(W)=E(W,G\vert \pi_2).$$
		Then, by Lemma \ref{B_1}, one has $\phi(x_1)\in supp(W)$. While, $\phi(x_1)\notin \{1^G\}\times Y$ and
		for every $\mu\in\mathcal{M}(X,G)$ one has $supp(\mu)\subseteq U^c$, which implies
		$supp(W)\subseteq\phi(U^c)\subseteq \{1^G\}\times Y$. Thus $\phi(x_1)\notin supp(W)$. It is 
		a contradiction.
	\end{proof}

	Now we are ready to give the proof of Theorem \ref{product is upe}.
	\begin{thm} \label{Bpthm}
		Let $\pi_i\colon (X_i,G)\to (Y_i,G)$ be two factor maps between $G$-systems 
		and $supp(Y_i)=Y_i$ for $i=1,2$. Then $\pi_1$ and $\pi_2$ has rel-u.p.e if and only if 
		$\pi_1\times \pi_2\colon (X_1\times X_2,G)\to (Y_1\times Y_2,G)$ has rel-u.p.e.
	\end{thm}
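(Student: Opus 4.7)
The converse direction is a direct unpacking. Fix $(x,x')\in R_{\pi_1}\setminus\Delta(X_1)$ and any $z\in X_2$; then $((x,z),(x',z))\in R_{\pi_1\times\pi_2}\setminus\Delta(X_1\times X_2)$. For arbitrary disjoint open neighbourhoods $U_1\ni x$ and $U_2\ni x'$ in $X_1$, the rectangles $U_1\times X_2$ and $U_2\times X_2$ are disjoint open, and independence of $\{U_1\times X_2, U_2\times X_2\}$ with respect to $\pi_1\times\pi_2$, provided by Corollary~\ref{rel-entropy-pair}, immediately gives a $y_1$-witness making $\{U_1,U_2\}$ independent with respect to $\pi_1$: the second-coordinate condition $\pi_2^{-1}(y_2)\ne\emptyset$ is vacuous, so the $(y_1,y_2)$-witness projects to the first coordinate. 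Applying Corollary~\ref{rel-entropy-pair} again yields rel-u.p.e for $\pi_1$; the case of $\pi_2$ is symmetric.

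For the forward direction, assume both $\pi_i$ have rel-u.p.e; Proposition~\ref{supp} yields $\supp(X_i)=X_i$. Take $((x_1,x_2),(x_1',x_2'))\in R_{\pi_1\times\pi_2}\setminus\Delta$ and disjoint open neighbourhoods $W_1,W_2$; shrink $W_i\supseteq U_i\times V_i$ to rectangular form. In the generic case $x_1\ne x_1'$, $x_2\ne x_2'$ we arrange $U_1\cap U_2=\emptyset$ and $V_1\cap V_2=\emptyset$; in the degenerate case $x_1=x_1'$ (the other is symmetric), disjointness of $W_1,W_2$ forces $V_1\cap V_2=\emptyset$ and we set $U_1=U_2=U$ open around $x_1$. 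A direct unwinding of the independence-set definition shows
\[
\mathcal{P}^{\pi_1\times\pi_2}_{\{U_1\times V_1,\,U_2\times V_2\}}\;=\;\mathcal{P}^{\pi_1}_{\{U_1,U_2\}}\,\cap\,\mathcal{P}^{\pi_2}_{\{V_1,V_2\}}
\]
in the generic case (since $(\pi_1\times\pi_2)^{-1}(y_1,y_2)$ factors as a product and the witnesses $y_1,y_2$ may be chosen separately), while in the degenerate case the analogous identity holds with the first factor replaced by $\mathcal{P}_A$ (as in Lemma~\ref{B_1}) for some closed $A\subseteq U$ with $\mu(A)>0$, $\mu\in\mathcal{M}(X_1,G)$; the existence of such $A$ comes from $\supp(X_1)=X_1$ together with inner regularity. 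By Proposition~\ref{B_0} and Lemma~\ref{B_1} both factors on the right have positive density, so the theorem reduces to a purely combinatorial statement about subshifts of $\{0,1\}^G$.

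The main obstacle is this: if $\mathcal{P}_1,\mathcal{P}_2\subseteq\{0,1\}^G$ are closed, shift-invariant, hereditary (i.e.\ downward closed under inclusion), and each of positive density, then so is $\mathcal{P}_1\cap\mathcal{P}_2$. I plan to settle this via a coordinatewise-minimum joining. Heredity of $\mathcal{P}_i$ implies $\mathcal{P}_{[e_G]\cap\mathcal{P}_i}=\mathcal{P}_i$ when $\mathcal{P}_i$ is viewed as a compact $G$-space, so Lemma~\ref{B_1} applied inside $\mathcal{P}_i$ delivers a shift-invariant $\lambda_i\in\mathcal{M}(\{0,1\}^G,G)$ with $\supp(\lambda_i)\subseteq\mathcal{P}_i$ and $\lambda_i([e_G])>0$. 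Form $\lambda_1\otimes\lambda_2$ on $\{0,1\}^G\times\{0,1\}^G$ (invariant under the diagonal shift), and push it forward by the equivariant coordinatewise minimum $\wedge\colon(\omega_1,\omega_2)\mapsto\omega_1\wedge\omega_2$, where $(\omega_1\wedge\omega_2)(g)=\min(\omega_1(g),\omega_2(g))$. Heredity of $\mathcal{P}_1$ and $\mathcal{P}_2$ guarantees that the pushforward $\lambda$ is supported in $\mathcal{P}_1\cap\mathcal{P}_2$, while
\[
\lambda([e_G])=(\lambda_1\otimes\lambda_2)([e_G]\times[e_G])=\lambda_1([e_G])\,\lambda_2([e_G])>0.
\]
Applying Lemma~\ref{B_1} in the reverse direction, this time inside $\mathcal{P}_1\cap\mathcal{P}_2$, then furnishes positive density of the intersection, completing the proof via Corollary~\ref{rel-entropy-pair}. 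The most delicate point to verify carefully will be the identity $\mathcal{P}_{[e_G]\cap\mathcal{P}_i}=\mathcal{P}_i$ and the heredity of the relevant independence subshifts, which are clean but must be spelled out.
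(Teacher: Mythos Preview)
Your argument is correct and takes a genuinely different route from the paper's. The paper proceeds by direct nested selection: since the density limit in Remark~\ref{rem1} equals the infimum over all finite $F$, a positive density $c_1$ for $\mathcal{P}^{\pi_1}_{\{U_1,U_2\}}$ means that inside \emph{every} finite set one finds an independence subset of proportion $c_1$; one then applies the same reasoning for $\pi_2$ (or, in the degenerate case, for $\mathcal{P}_V$) inside that subset to extract a further proportion $c_2$, yielding an independence set for the product of proportion $c_1c_2$. Your approach instead converts positive density into the existence of an invariant measure on $\mathcal{P}_i$ charging $[e_G]$ via Lemma~\ref{B_1}, builds the product measure, and pushes it forward by the equivariant coordinatewise minimum $\wedge$ to land in $\mathcal{P}_1\cap\mathcal{P}_2$; a second appeal to Lemma~\ref{B_1} then returns positive density. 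This is more conceptual and isolates a clean subshift statement (positive density is closed under intersection for hereditary closed shift-invariant families), whereas the paper's argument is more elementary and avoids invoking Lemma~\ref{B_1} in the generic case.

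One small point to tidy: your combinatorial lemma requires $\mathcal{P}_1,\mathcal{P}_2$ to be \emph{closed} in $\{0,1\}^G$, but $\mathcal{P}^{\pi_i}_{\{U_1^{(i)},U_2^{(i)}\}}$ is only guaranteed closed when the $U_j^{(i)}$ are closed (as the paper notes just before Proposition~\ref{B_0}). You handle this correctly in the degenerate case by passing to a closed $A\subseteq U$; in the generic case you should likewise first shrink each open neighbourhood to a closed one with nonempty interior containing the relevant point, so that Corollary~\ref{rel-entropy-pair} still supplies positive density while the resulting $\mathcal{P}_i$ become closed subshifts. This is routine, but should be said explicitly.
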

	\begin{proof}For the nontrivial direction, if $\pi_1$ and $\pi_2$ have rel-u.p.e, for any 
		$u_1=(x_1,z_1)$ and $u_2=(x_2,z_2)$ in $ X_1\times X_2$ with $(u_1,u_2)\in R_{\pi_1\times\pi_2}\setminus\Delta(X_1\times X_2)$,
		we shall prove $(u_1,u_2)\in E(X_1\times X_2,G\vert\pi_1\times\pi_2)$. 
		Without loss of generality, we assume $x_1\neq x_2$. 

		Let $\widetilde{U}_1=U_1\times V_1,\widetilde{U}_2=U_2\times V_2$ be neighbourhoods of $u_1$ and $u_2$ respectively.
		Note that,
		$(x_1,x_2)\in R_{\pi_1}\setminus\Delta(X_1)=E(X_1,G\vert\pi_1)$ since $\pi_1$ has rel-u.p.e. Then by Corollary \ref{rel-entropy-pair} there exists
		$c_1>0$ such that, for every $F\in\mathcal{F}(G)$ there exists $E\subseteq F$ with $\vert E\vert>c_1\vert F\vert$, which is an
		independence set of $\{U_1,U_2\}$ with respect to $\pi_1$. For $z_1$ and $z_2$ there are two cases.
		
		\textbf{Case 1:} $z_1\neq z_2$. In this case, $(z_1,z_2)\in R_{\pi_2}\setminus\Delta(X_2)=E(X_2,G\vert\pi_2)$ since $\pi_2$ has
		rel-u.p.e. Then there exists $c_2>0$ such that for every $F\in \mathcal{F}(G)$ there exists $F_0\subseteq F$
		with $\vert F_0\vert>c_1\cdot c_2\vert F\vert$, which is an independence set of $\{\widetilde{U}_1,\widetilde{U}_2\}$ with 
		respect to $\pi_1\times\pi_2$. This implies $(u_1,u_2)\in E(X_1\times X_2,G\vert\pi_1\times\pi_2)$.

		\textbf{Case 2:} $z_1=z_2=z$ for some $z\in X_2$. We set $V=V_1\cap V_2$. Then $V$ is an open neighbourhood of $z$. 
		Since $supp(Y_2)=Y_2$ and $\pi_2$ has rel-u.p.e, by Proposition \ref{supp}, we have $supp(X_2)=X_2$. Thus there exists $\nu\in\mathcal{M}(X_2,G)$,
		such that $\nu(V)>0$. By Lemma \ref{B_1}, $\mathcal{P}_V^{\pi_2}$ has positive density. Then by similar analysis in Case 1, we can also
		obtain that $(u_1,u_2)\in E(X_1\times X_2,G\vert\pi_1\times\pi_2)$. This ends our proof.
	\end{proof}

\end{appendix}

\bibliographystyle{amsplain}

\end{document}